\def\modo#1{\left| #1 \right|}
\numberwithin{equation}{section}
\theoremstyle{plain}
\newtheorem{theorem}{Theorem}[section]
\newtheorem{proposition}[theorem]{Proposition}
\newtheorem{lemma}[theorem]{Lemma}
\newtheorem{corollary}[theorem]{Corollary}
\newtheorem{definition}[theorem]{Definition}
\newtheorem{rem}[theorem]{Remark}
\newtheorem{exa}[theorem]{Example}  
\newtheorem{conj}[theorem]{Conjecture} 
\newtheorem{fact}[theorem]{Fact}
\newtheorem{notation}[theorem]{Notation}
\begin{document}

\title{New Permutation Representations \\ of The Braid Group}
\date{July 15, 2008}
\author{Amiel Ferman\footnote{This article was submitted as part of the first author's PhD Thesis at Bar-Ilan University, Ramat-Gan Israel, 
written under the supervision of Mina Teicher and Tahl Nowik}, Tahl Nowik, Robert Schwartz, Mina Teicher \\ \\ 
Department of Mathematics, Bar-Ilan University \\ Ramat-Gan, Israel }
\maketitle

\begin{abstract}

We give a new infinite family of group homomorphisms from
the braid group $B_k$ to the symmetric group $S_{mk}$ for all $k$ and $m \geq 2$.
Most known permutation representations of braids are included in this family.
We prove that the homomorphisms in this family are non-cyclic and transitive.
For any divisor $l$ of $m$, $1\leq l < m$, we prove in particular that if $\frac{m}{l}$ is odd
then there are $1 + \frac{m}{l}$ non-conjugate homomorphisms included in our family. 
We define a certain natural restriction on homomorphisms $B_k\to S_n$,
common to all homomorphisms in our family,
which we term {\em good}, and of which there are two types.
We prove that all good homomorphisms $B_k\to S_{mk}$ of type $1$ are
included in the infinite family of homomorphisms we gave.
For $m=3$, we prove that all good homomorphisms $B_k\to S_{3k}$ of type $2$ are
also included in this family. Finally, we refute a conjecture made in \cite{MaSu05}
regarding permutation representations of braids and give an updated conjecture.

\end{abstract}

\section{Introduction}

The study of permutation representations of braids, i.e., homomorphisms from the braid group $B_k$ on $k$ strings
to the symmetric group $S_n$, is closely related to a variety of fields such as
algebraic equations with function coefficients, algebraic functions, configuration spaces and their
holomorphic maps and more. For more information on these relations see \cite{Arn70a,Arn70b,Arn70c}, \cite{GorLin69,GorLin74},
\cite{Kal75,Kal76,Kal93}, \cite{Kurth97},
\cite{Lin71,Lin72b,Lin79,Lin96a,Lin96b,Lin03,Lin04a}.

Most results known to date on permutation representations of braids were obtained by Artin (\cite{Artin}) and by Lin (\cite{Lin04b}).
Some results were also obtained by Matei and Suciu in \cite{MaSu05} where permutation representations of $B_3$ 
and $B_4$ are discussed and computed (we shall address a conjecture made in that paper later on). Let us also
mention the work of Arouch in \cite{Ab07} who has applied an algorithmic point of view.

Lin (\cite{Lin04b}) gives an explicit description of all non-cyclic transitive homomorphisms $B_k\to S_{2k}$ for $k>6$ up to conjugation. 
He proves that there are exactly three such homomorphisms up to conjugation. 
In our work we give a previously unknown infinite family of group homomorphisms $B_k \to S_{mk}$ for all $k>8$ and $m \geq 2$.
In fact, our description of these homomorphisms (up to conjugation), can be seen to be a generalization
of Lin's result. Furthermore, we show that this family can be described essentially by a single formula.
Interestingly enough, this single formula includes almost all permutation representations known to date.
We prove that all homomorphisms in this family are non-cyclic and transitive.
Furthermore, for any divisor $l$ of $m$, $1\leq l < m$, we prove in particular that if $\frac{m}{l}$ is odd
then there are $1 + \frac{m}{l}$ non-conjugate homomorphisms included in our family 
(in fact, the condition on $m$ and $l$ which guarantees $1 + \frac{m}{l}$ 
non-conjugate homomorphisms is more general than that - see proposition~\ref{prop m l cond}).
We define a certain natural restriction on homomorphisms $B_k\to S_n$, common to all
homomorphisms in our family, defined as follows

We call a homomorphism $\omega \colon B_k\to S_n$ \textbf{good} if one of the following holds

\begin{enumerate}
\item $supp(\widehat \sigma_i) \cap supp(\widehat \sigma_j) = \emptyset$ for $1\leq i,j \leq k-1$ such that $|i-j| > 1$.\\
\item $supp(\widehat \sigma_1) = \cdots = supp(\widehat \sigma_{k-1})$
\end{enumerate}

We prove that all transitive non-cyclic good homomorphisms $B_k\to S_{mk}$ of type $1$ are
included in the infinite family of homomorphisms we gave.
For $m=3$, we prove that all good homomorphisms $B_k\to S_{3k}$ of type $2$ are
also included in this family. 

We state our conjecture that all non-cyclic transitive homomorphisms $B_k\to S_n$ for $n>2k$ and $k>8$ are good.
This conjecture is based on our findings as well as those of Lin(\cite{Lin04b})~: There is only
a finite number of known examples for non-cyclic transitive homomorphisms $B_k\to S_n$ which are not good.
These examples are given for small values of $k$ and $n$ (see section~$4$ in \cite{Lin04b}).

Our paper is organized as follows : 

In \textbf{section~\ref{sec defs}} we give the definitions and notations which we shall use throughout the article. We define
the important notion of (co)retraction and (co)reduction of a homomorphism $B_k\to S_m$ and prove a relevant key result.
In \textbf{section~\ref{sec intro chap perm rep}} we present our new family of permutation representations of braids and
prove various results regarding it as follows : In \textbf{subsection~\ref{subsec model perm rep}} we give a new family of 
homomorphisms $B_k\to S_{mk}$ and prove these are transitive and non-cyclic. We also exhibit how this family generalizes 
the known family of homomorphisms $B_k\to S_{2k}$ found by Lin in \cite{Lin04b}.
In \textbf{subsection~\ref{subsec new perm rep}} we define our new family of homomorphisms $B_k\to S_{mk}$ and prove
that these homomorphisms are well defined, non-cyclic and transitive.
In \textbf{subsection~\ref{subsec good perm rep}} we define the notion of {\it good} permutation representations of braids, common to the homomorphisms
in our family, of which there are two types and we prove some of its properties.
In \textbf{subsection~\ref{subsec conj model homs}} we prove that there are non-trivial conjugacies between the homomorphisms defined in
our new family of homomorphisms $B_k\to S_{mk}$. In particular, for any divisor $l$ of $m$, $1\leq l < m$, 
we prove that if $\frac{m}{l}$ is odd then there are $1 + \frac{m}{l}$ non-conjugate homomorphisms 
included in our family. In \textbf{section~\ref{section B_k S_mk supp=2m}} we prove that 
all type $1$ good transitive non-cyclic homomorphisms $B_k \to S_{mk}$
are included in the family of homomorphisms we gave in section~\ref{sec intro chap perm rep}. 
In \textbf{section~\ref{section B_k S_3k supp=mk}} we prove that all type $2$ good transitive non-cyclic homomorphisms $B_k \to S_{3k}$
are included in the family of homomorphisms we gave in section~\ref{sec intro chap perm rep}. 
Finally, in \textbf{section~\ref{section conjecture}} we give an updated conjecture (updating the conjecture made in \cite{MaSu05}) 
regarding permutation representations of braid groups and we explain a way to construct homomorphisms $B_k\to S_n$
for $n\neq mk$. We give an infinite family of non-cyclic transitive homomorphisms $B_k\to S_{\frac{k(k-1)}{2}}$
thus refuting a conjecture made in \cite{MaSu05}.

\section{Definitions and Basic Properties}\label{sec defs}

Let us fix a few definitions and notations (most of our notations are taken from \cite{Lin04b}).

\begin{definition}
The \textbf{canonical presentation} of the braid group $B_k$ on $k$ strings consists of $k-1$ generators $\sigma_1,\ldots,\sigma_{k-1}$
and relations

\begin{eqnarray}
&&\sigma_i\sigma_j
=\sigma_j\sigma_i\qquad\qquad\qquad \  (\modo{i-j}\ge 2)\,,\\
&&\sigma_i\sigma_{i+1}\sigma_i
=\sigma_{i+1}\sigma_i\sigma_{i+1}\qquad (1\le i\le k-2)\,.                                             
\end{eqnarray}

\hfill $\bullet$

\end{definition}

\begin{notation}\label{notation braid pair}
We denote a conjugate pair of elements $g,h$ of a group $G$ as $g\sim h$. We denote conjugation 
as usual by $a^b = b^{-1} a b $ and we denote the commutator as usual $[a,b] = aba^{-1}b^{-1} $. The pair $g,h$ 
is called {\bf braid-like} if $ghg = hgh$; in this case we write $g\infty h$.
Clearly, $g\infty h$ implies $g\sim h$. \hfill $\bullet$
\end{notation}
\vskip0.2cm

\begin{definition}[\textbf{Conjugate homomorphisms}]
Two group homomorphisms $\phi,\psi\colon G\to H$ are called {\bf conjugate}
if there exists an element $h\in H$ such that
$\psi(g)=h^{-1}\phi(g)h$ for all $g\in G$; in this case
we write $\phi\sim\psi$ and $\psi = \phi^h$; ``$\sim$" is an equivalence
relation in $Hom(G,H)$.\hfill $\bullet$
\end{definition}
\vskip0.2cm

\begin{definition}[\textbf{Fixed points, Supports and Invariant sets}]\label{def fixed points}
We regard the symmetric group on a set $\Gamma$, $\mathbf{S}(\Gamma)$, as acting on $\Gamma$ from the left.
For any element $s$ of $\mathbf{S}(\Gamma)$ let $\mathbf{Fix(s)}$ denote the set $\{\gamma\in\Gamma|\, s(\gamma)=\gamma\}$
of all fixed points of $s$; the complement $ \mathbf{supp(s) =\Gamma\setminus Fix(s)}$
is called the {\bf support} of $s$. If $\mathfrak C = \{ s_1,\ldots,s_n\} $ is a set of permutations
then we denote $supp(\mathfrak C) = \bigcup_{i=1,\ldots,n} supp(s_i)$. 
For $\Sigma\subseteq\Gamma$ we
identify ${\mathbf S}(\Sigma)$ with the subgroup 
$\{s\in{\mathbf S}(\Gamma)|\, supp(s)\subseteq\Sigma\}
\subseteq{\mathbf S}(\Gamma)$. $s,t\in{\mathbf S}(\Gamma)$ are
{\bf disjoint} if $ supp(s)\cap supp(t)=\emptyset$. For $H\subseteq{\mathbf S}(\Gamma)$ a subset
$\Sigma\subseteq\Gamma$ is $H$-{\em invariant} if
$s(\Sigma)=\Sigma$ for all $s\in H$; we denote by $\mathbf{Inv (H)}$ the
family of all {\sl non-trivial} (i. e., $\ne\emptyset$ and
$\ne\Gamma$) $H$-invariant subsets of $\Gamma$. \hfill $\bullet$
\end{definition}

\begin{rem}\label{rem convention dis perms}
If $\Gamma$ and $\Gamma'$ are two disjoint sets, and $A \in \mathbf{S}(\Gamma), B\in \mathbf{S}(\Gamma')$ are two 
permutations on them, then it will be our convention in this work that the multiplication $AB$ should mean
a permutation in $\mathbf{S}(\Gamma \cup \Gamma')$. \hfill $\bullet$
\end{rem}

\begin{notation}\label{notation S_n}
${\mathbf S_n}$ denotes the symmetric
group of degree $n$, that is, the permutation group
${\mathbf S}({\boldsymbol\Delta}_n)$ of the $n$ point set
${\boldsymbol\Delta}_n=\{1,2,\ldots ,n\}$.
The {\em alternating subgroup} ${\mathbf A_n}\subset{\mathbf S_n}$
consists of all even permutations $S\in{\mathbf S_n}$ and coincides
with the commutator subgroup ${\mathbf S'_n}$. \hfill $\bullet$
\end{notation}

\begin{definition}\label{def canonical}
The \textbf{canonical epimorphism} $\mu = \mu_k \colon B_k \to {\mathbf S_k}$ is defined by 

$$\mu(\sigma_i) = (i,i+1)\in {\mathbf S_k}$$

\hfill $\bullet$
\end{definition}

\begin{definition}
We say that a group homomorphism $\psi \colon G \to {\mathbf S_n}$ is \textbf{transitive} if its image $\psi(G)$ is transitive,
i.e. if $\psi(G)$ as a subgroup of $S_n \cong \mathbf{S}(\Delta_n)$ acts transitively on $\Delta_n$ (see Definition~\ref{def fixed points}
and Notation~\ref{notation S_n}). $\psi$ is called \textbf{cyclic} if its image $\psi(G)$ is a cyclic subgroup of $S_n$.

\hfill $\bullet$
\end{definition}

Let us record the following

\begin{fact}\label{fact braid commute}
Let $a,b$ be any two elements in a group which satisfy $a \infty b$ (i.e. $a$ and $b$ are a braid-like pair) and $[a,b]=1$,
then we have that $b=a^{ba}=a^a = a $. \hfill $\bullet$
\end{fact}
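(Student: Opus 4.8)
The plan is to prove this by a short direct computation, using the two hypotheses $a\infty b$ (meaning $aba=bab$) and $[a,b]=1$ (meaning $ab=ba$). First I would observe that the notation in the statement already tells us how to organize the argument: we must show $b=a^{ba}$, then $a^{ba}=a^a$, and finally $a^a=a$. The last equality $a^a = a^{-1}aa = a$ is immediate from the definition of conjugation in Notation~\ref{notation braid pair}. The middle equality $a^{ba}=a^a$ follows at once from commutativity: since $ab=ba$, we have $a^{ba} = (ba)^{-1} a (ba) = a^{-1}b^{-1}ab a = a^{-1}b^{-1}ba a = a^{-1} a a = a^a$; more conceptually, conjugating by $ba$ is the same as conjugating by $ab$, and conjugating $a$ by $ab$ first does nothing with the inner $a$, leaving conjugation by $b$, which also fixes $a$.

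The only real content is the first equality $b = a^{ba}$, and this is where the braid-like relation enters. Rewrite $aba=bab$ as $a^{-1}(aba) = a^{-1}(bab)$, i.e. $ba = a^{-1}bab$, which says $ba = (ba)^{b}$ — wait, more directly: from $aba=bab$ we get, multiplying on the left by $a^{-1}$ and on the right by $a^{-1}$, the identity $b = a^{-1}bab a^{-1}$. Hmm, that is not quite $a^{ba}$. Let me instead manipulate toward the target: $a^{ba} = (ba)^{-1} a (ba) = a^{-1}b^{-1} a b a$. Now apply the braid relation $aba = bab$ to the last three factors: $a^{-1}b^{-1}(aba) = a^{-1}b^{-1}(bab) = a^{-1}(b^{-1}b)ab = a^{-1}ab = b$. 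So $a^{ba}=b$ purely from $aba=bab$, with no need for commutativity at all.

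Assembling these pieces gives the chain $b = a^{ba} = a^a = a$, where the first equality used only the braid-like relation and the second used only the commutation relation. I expect no obstacle here — the statement is a one-line lemma whose purpose is presumably to be invoked later (for instance to force degeneracies when two braid generators map to commuting, braid-like permutations). The only thing to be careful about is bookkeeping with the left-action convention $a^b = b^{-1}ab$ and with the order of factors, so that the braid relation is applied to the correct adjacent triple; once that is pinned down the computation is the three-step cancellation above.
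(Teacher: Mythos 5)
Your computation is correct and is exactly the argument the paper intends: the stated chain $b=a^{ba}=a^a=a$ is itself the proof, with $b=a^{ba}$ coming from $aba=bab$ alone and $a^{ba}=a^a=a$ from $[a,b]=1$ (the paper states the Fact without further proof). Your bookkeeping with the convention $a^g=g^{-1}ag$ is accurate, so there is nothing to add.
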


This fact implies the following

\begin{lemma}\label{lemma cyclic homomorphism}
A homomorphism $\omega\colon B_k\to S_n$ is cyclic iff $\varphi(\sigma_i) = \varphi(\sigma_{i+1})$
for some $i\in\{1,\ldots,k-2 \}$ iff $\varphi(\sigma_1) = \cdots \varphi(\sigma_{k-1})$.

\end{lemma}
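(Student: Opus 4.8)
The plan is to prove the cyclic chain of implications
$$\bigl(\omega(\sigma_1)=\cdots=\omega(\sigma_{k-1})\bigr)\ \Longrightarrow\ \omega \text{ is cyclic}\ \Longrightarrow\ \bigl(\omega(\sigma_i)=\omega(\sigma_{i+1})\text{ for some }i\bigr)\ \Longrightarrow\ \bigl(\omega(\sigma_1)=\cdots=\omega(\sigma_{k-1})\bigr),$$
which closes the loop and therefore establishes the two claimed equivalences. (Throughout one assumes $k\ge 3$, so that the index set $\{1,\dots,k-2\}$ is nonempty and all three conditions are meaningful; note that the ``$\varphi$'' appearing in the statement is of course the homomorphism $\omega$.)

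The first two implications are the easy ones. If all $\omega(\sigma_i)$ equal a single permutation $c$, then $\omega(B_k)=\langle c\rangle$ is cyclic. If $\omega(B_k)$ is cyclic it is abelian, so $[\omega(\sigma_i),\omega(\sigma_{i+1})]=1$ for every $i$; on the other hand the braid relation $\sigma_i\sigma_{i+1}\sigma_i=\sigma_{i+1}\sigma_i\sigma_{i+1}$ transports under $\omega$ to $\omega(\sigma_i)\infty\omega(\sigma_{i+1})$, and Fact~\ref{fact braid commute} then forces $\omega(\sigma_i)=\omega(\sigma_{i+1})$; in particular this holds for some $i$.

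The third implication is the substantive one, and I would prove it by ``spreading'' a single coincidence of images to neighbouring generators. Suppose $\omega(\sigma_j)=\omega(\sigma_{j+1})=:c$ for some $j\in\{1,\dots,k-2\}$. If $j\ge 2$, then $\sigma_{j-1}$ commutes with $\sigma_{j+1}$ (their indices differ by $2$) while $\sigma_{j-1}\infty\sigma_j$; applying $\omega$ and using $\omega(\sigma_{j+1})=\omega(\sigma_j)=c$ shows that $\omega(\sigma_{j-1})$ commutes with $c$ and satisfies $\omega(\sigma_{j-1})\infty c$, whence $\omega(\sigma_{j-1})=c$ by Fact~\ref{fact braid commute}. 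Symmetrically, if $j+2\le k-1$, then $\sigma_{j+2}$ commutes with $\sigma_j$ and $\sigma_{j+2}\infty\sigma_{j+1}$, and the same reasoning gives $\omega(\sigma_{j+2})=c$. Each such step yields a fresh adjacent pair of generators with equal image, so a downward induction on the left end together with an upward induction on the right end gives $\omega(\sigma_1)=\cdots=\omega(\sigma_{k-1})=c$.

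The only point demanding any care — and the step I expect to be the (mild) obstacle — is the bookkeeping in this last argument: one must check that the auxiliary indices $j-1$ and $j+2$ remain inside $\{1,\dots,k-1\}$, that the commuting relation used really applies (indices differing by at least $2$), and that the symmetry of the relation ``$\infty$'' is invoked correctly when a pair is fed into Fact~\ref{fact braid commute}, which is phrased for $a\infty b$ with $[a,b]=1$. Once these are in place, everything is a direct translation of the defining relations of $B_k$ through $\omega$.
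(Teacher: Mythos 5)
Your proof is correct and follows essentially the same route as the paper: both directions rest on Fact~\ref{fact braid commute}, using commutativity of a cyclic image to force equality of adjacent generators, and conversely spreading a single coincidence $\omega(\sigma_j)=\omega(\sigma_{j+1})$ to all generators via the far-commutation relations plus the braid relations. Your version merely arranges the equivalences as a cycle of implications and is more explicit about the index bookkeeping (and the $k\ge 3$ assumption) than the paper's terse ``apply the same argument for each consecutive index.''
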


\begin{proof}

If $\omega$ is cyclic then in particular we have that for each $i=1,\ldots,k-2$

$$  [\omega(\sigma_i),\omega(\sigma_{i+1})]=1  $$

but since $\omega$ is a homomorphism we also have

$$ \omega(\sigma_i) \infty \omega(\sigma_{i+1}) $$

and this implies by Fact~\ref{fact braid commute} that $\omega(\sigma_i)=\omega(\sigma_{i+1})$
for all $i=1,\ldots,k-2$.

Conversely, if $\omega(\sigma_i) = \omega(\sigma_{i+1})$ for some $i\in \{1,\ldots,k-2\}$, 
then $\omega(\sigma_{i+2}) \infty \omega(\sigma_{i+1})$ 
and $1 = [\omega(\sigma_{i+2}),\omega(\sigma_i)] = 
[\omega(\sigma_{i+2}),\omega(\sigma_{i+1})]$ implies by Fact~\ref{fact braid commute} 
that $\omega(\sigma_{i+1}) = \omega(\sigma_{i+2})$. Applying the same argument for each consecutive index gives 
that $\omega(\sigma_1) = \cdots = \omega(\sigma_{k-1})$ and hence $Im(\omega)$ is a cyclic subgroup of $S_n$
generated by $\omega(\sigma_1)$. 
\end{proof}

\begin{definition}[\textbf{Cyclic types, Components}]
Let $A = C_1\cdots C_q$ be the cyclic
decomposition of $A\in{\mathbf S_n}$ and
$r_i\ge 2$ be the length of the cycle
$C_i$ \ ($1\le i\le q$); the {\sl unordered} $q$-tuple of
the natural numbers $[r_1,\ldots ,r_q]$ is called
the {\bf cyclic type} of $A$ and is denoted by $[A]$
(each $r_i$ occurs in $[A]$ as many times
as $r_i$-cycles occur in the cyclic decomposition of $A$).
Clearly, $ord(A) = LCM(r_1,\ldots, r_q)$
(the least common multiple of $r_1,...,r_q$). For $A\in{\mathbf S_n}$ and a natural $r\ge 2$ we denote by
\index{r-component of a permutation\hfill}
\index{${\mathfrak C}_r(A)$\hfill}
${\mathfrak C}_r(A)$ the set of all $r$-cycles that occur in the cyclic
decomposition of $A$; we call this set the ${\mathbf r}$ {\bf - component}
of $A$. The {\bf length} of an $r$-component ${\mathfrak C}_r(A)$ is the number $|{\mathfrak C}_r(A)|$.
The set $\ Fix(A)$ is called the {\bf degenerate component}
of $A$.  \hfill $\bullet$
\end{definition}
\vskip0.2cm

\begin{notation}
For a permutation representation $\omega\colon B_k\to S_n$ we shall sometimes denote $\omega(\sigma_i)$ as $\widehat \sigma_i$
when $\omega$ is understood from the context. \hfill $\bullet$
\end{notation}

\begin{definition}[\textbf{Disjoint product of homomorphisms}]\label{Disjoint product}
Given a disjoint decomposition 
${\boldsymbol\Delta}_n=D_1\cup\cdots\cup D_q$, \ $ |D_j|=n_j$,
we have the corresponding embedding
${\mathbf S}(D_1)\times\cdots\times{\mathbf S}(D_q)
\hookrightarrow{\mathbf S_n}$.
For a family of group homomorphisms
$\psi_j\colon G\to{\mathbf S}(D_j)\cong{\mathbf S_{n_j}}$, \
$j=1,...,q$, we define the {\bf disjoint product}
$$
\psi=\psi_1\times\cdots\times\psi_q\colon G\to
{\mathbf S}(D_1)\times\cdots\times{\mathbf S}(D_q)
\hookrightarrow{\mathbf S_n}
$$ 
by $\psi(g)=\psi_1(g)\cdots\psi_q(g)\in{\mathbf S_n}$, \ $g\in G$.\hfill $\bullet$
\end{definition}

\begin{definition}[\textbf{Reductions of homomorphisms to $\mathbf{S_n}$}]
Let $\omega\colon G\to{\mathbf S_n}$ be a group homomorphism, $H = Im(\omega)$
and $\Sigma\in Inv(H)$ (for instance, $\Sigma$ may be an $H$-orbit).
Define the homomorphisms
$$
\phi_\Sigma\colon H\ni s\mapsto s|\Sigma\in{\mathbf S}(\Sigma)
\quad\text{and}\quad
\omega_{\Sigma}=\phi_\Sigma\circ\omega\colon G\stackrel{\omega}{\longrightarrow} H 
\stackrel{\phi_\Sigma}{\longrightarrow} {\mathbf S}(\Sigma)\,;
$$
the composition $\omega_{\Sigma}=\phi_\Sigma\circ\omega$
\index{Reduction of homomorphism\hfill}
is called the {\bf reduction of $\mathbf{\omega}$ to $\mathbf{\Sigma}$}; it is
transitive if and only if $\Sigma$ is
an $Im(\omega)$-orbit. Any homomorphism $\omega$ is the disjoint
product of its reductions to all $Im(\omega)$-orbits
(this is just the decomposition of the representation $\omega$
into the direct sum of irreducible representations). \hfill $\bullet$
\end{definition}

\begin{definition}[\textbf{Retractions and Coretractions}]\label{def retractions}
Let $\omega\colon B_k \to S_n$ be a homomorphism.
Let $ \mathfrak D_r = \{ D_1,\ldots,D_t\} $ be an ordered set of $r$-cycles in the cycle decomposition of $\widehat \sigma_1$ for some $r\ge 2$. 
We call $\mathfrak D_r$ an $\mathbf{r}$-\textbf{subcomponent} of $\widehat \sigma_1$. Since $\widehat\sigma_3,...,\widehat\sigma_{k-1}$ 
commute with $\widehat\sigma_1$,
the conjugation of $\widehat\sigma_1$ by any of the elements 
$\widehat\sigma_i$ ($3\le i\le k-1$) induces
a certain permutation of the $r$-cycles in the cycle decomposition of $\widehat \sigma_1$; suppose this permutation
permutes the cycles of the $r$-subcomponent $\mathfrak D_r$ among themselves, then this gives rise to a homomorphism
$$
\Omega_{\mathfrak D_r}\colon B_{k-2}\to{\mathbf S}(\mathfrak D_r)\cong {\mathbf S_t},
$$
where $B_{k-2}$ is the subgroup of $B_k$ generated
by $\sigma_3,...,\sigma_{k-1}$ and each $\sigma_i$ is sent to the permutation it induces on the $t$ $r$-cycles in $\mathfrak D_r$.

The homomorphism $\Omega_{\mathfrak D_r}$ is called the \textbf{retraction} of $\omega$ to the $r$-subcomponent $\mathfrak D_r$. 

Furthermore, since 

$$ supp(\mathfrak D_r) = \bigcup_{i=1,\ldots,t} supp(D_i) \in Inv(\{ \widehat \sigma_3,\ldots,\widehat \sigma_{k-1} \}) $$

(since, according to our assumption, the conjugation of $\widehat \sigma_1$ by 
each permutation in $\{ \widehat \sigma_3,\ldots,\widehat \sigma_{k-1} \}$ 
induces a permutation of the cycles in $\mathfrak D_r$ among themselves), we can define
the reduction of $\omega$ to $supp(\mathfrak D_r)$~:

$$ \omega_{supp(\mathfrak D_r)}\colon B_{k-2} \to \mathbf{S}(supp(\mathfrak D_r)) $$

where $B_{k-2}$ is the subgroup of $B_k$ generated by $\sigma_3,...,\sigma_{k-1}$. The homomorphism $\omega_{supp(\mathfrak D_r)}$
thus defined is called the \textbf{reduction of $\mathbf{\omega}$ corresponding to the $\bf r$-subcomponent $\bf \mathfrak D_r$}.

Similarly, suppose that $\mathfrak E_r = \{ E_1,\ldots,E_t \}$ is an $r$-subcomponent of $\widehat \sigma_{k-1}$ and
suppose that $\widehat\sigma_1,...,\widehat\sigma_{k-3}$ permute the cycles in $\mathfrak E_r$ among themselves then
we can define

$$
\Omega_{\mathfrak E_r}\colon B^*_{k-2}\to{\mathbf S}(\mathfrak E_r)\cong {\mathbf S_t},
$$
where $B^*_{k-2}$ is the subgroup of $B_k$ generated
by $\sigma_1,...,\sigma_{k-3}$ and each $\sigma_i$ is sent to the permutation it induces on the $t$ $r$-cycles in $\mathfrak E_r$.

The homomorphism $\Omega_{\mathfrak E_r}$ is called the \textbf{coretraction} of $\omega$ to the $r$-subcomponent $\mathfrak E_r$.

Furthermore, since 

$$ supp(\mathfrak E_r) = \bigcup_{i=1,\ldots,t} supp(E_i) \in Inv(\{ \widehat \sigma_1,\ldots,\widehat \sigma_{k-3} \}) $$

(since, according to our assumption, the conjugation of $\widehat \sigma_{k-1}$ with 
each permutation in $\{ \widehat \sigma_1,\ldots,\widehat \sigma_{k-3} \}$ 
induces a permutation of the cycles in $\mathfrak E_r$ among themselves), 
we can define the reduction of $\omega$ to $supp(\mathfrak E_r)$~:

$$ \omega_{supp(\mathfrak E_r)}\colon B^*_{k-2} \to \mathbf{S}(supp(\mathfrak E_r)) $$

where $B^{*}_{k-2}$ is the subgroup of $B_k$ generated by $\sigma_1,...,\sigma_{k-3}$. The homomorphism $\omega_{supp(\mathfrak E_r)}$
thus defined is called the \textbf{coreduction of $\mathbf{\omega}$ corresponding to the $\bf r$-subcomponent $\bf \mathfrak E_r$}

\hfill $\bullet$

\vskip0.2cm
\end{definition}

We will use the following results (based on sections~$5$ and $6$ in~\cite{Lin04b})





\begin{proposition}\label{prop commut retract}
Let $\omega\colon B_k \to S_n$ be a homomorphism. Let $\mathfrak D_r$ be an $r$-subcomponent of $\widehat \sigma_1$ with $t$ cycles. 
Let $\Omega_{\mathfrak D_r}$ be the retraction of $\omega$ to the $r$-subcomponent 
$\mathfrak D_r$ (assuming it is well defined) 
and let $\omega_{supp(\mathfrak D_r)}$ be the corresponding reduction as defined in Definition~\ref{def retractions}.
Then there exists a commuting diagram as follows

\begin{equation}\label{eq: CD with Omega lemma}
\xymatrix{
&& B_{k-2} \ar[d]_{\omega_{supp(\mathfrak D_r)}} \ar[rrd]^{\Omega_{\mathfrak D_r}} \\
H\ar[rr]^{i} && G\ar[rr]^{\pi} && \mathbf{S}(\mathfrak D_r)\cong S_t}
\end{equation}

where $B_{k-2}$ is the subgroup of $B_k$ generated by $\{\sigma_3,\ldots,\sigma_{k-1} \}$, $G$ is a finite group, $H$ is a finite abelian group 
and the horizontal line in \eqref{eq: CD with Omega lemma} is an exact sequence.

Similarly, let $\mathfrak E_r$ be an $r$-subcomponent of $\widehat \sigma_{k-1}$ with $t$ cycles. 
Let $\Omega_{\mathfrak E_r}$ be the coretraction of $\omega$ to the $r$-subcomponent 
$\mathfrak E_r$ (assuming it is well defined) 
and let $\omega_{supp(\mathfrak E_r)}$ be the corresponding coreduction as defined in Definition~\ref{def retractions}.
Then there exists a commuting diagram as follows

\begin{equation}\label{eq: CD with Omega lemma*}
\xymatrix{
&& B^*_{k-2} \ar[d]_{\omega_{supp(\mathfrak E_r)}} \ar[rrd]^{\Omega_{\mathfrak E_r}} \\
H^*\ar[rr]^{i} && G^*\ar[rr]^{\pi} && \mathbf{S}(\mathfrak E_r)\cong S_t}
\end{equation}

where $B^*_{k-2}$ is the subgroup of $B_k$ generated by $\{\sigma_1,\ldots,\sigma_{k-3} \}$, $G^*$ is a finite group, $H^*$ is a finite abelian group 
and the horizontal line in \eqref{eq: CD with Omega lemma*} is an exact sequence.

\end{proposition}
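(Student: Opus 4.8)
The plan is to build the finite group $G$ from the symmetric group on the $r$-cycles of $\widehat\sigma_1$ together with the action that the $\widehat\sigma_i$ ($3\le i\le k-1$) induce, and to identify $H$ as the kernel of the natural projection onto ${\mathbf S}(\mathfrak D_r)$. Concretely, let $\Sigma = supp(\mathfrak D_r)$ and recall from Definition~\ref{def retractions} that $\Sigma$ is $\{\widehat\sigma_3,\ldots,\widehat\sigma_{k-1}\}$-invariant, so that the reduction $\omega_{supp(\mathfrak D_r)}\colon B_{k-2}\to{\mathbf S}(\Sigma)$ is well defined. Write $G = Im(\omega_{supp(\mathfrak D_r)})\subseteq{\mathbf S}(\Sigma)$, a finite group. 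Each element $g\in G$ permutes among themselves the $r$-cycles of $\widehat\sigma_1$ that lie in $\Sigma$ (this is exactly the hypothesis that makes $\Omega_{\mathfrak D_r}$ well defined, applied to the generators and then extended multiplicatively), hence induces a well-defined permutation $\pi(g)\in{\mathbf S}(\mathfrak D_r)$; one checks $\pi$ is a homomorphism because conjugation respects composition. By construction $\pi\circ\omega_{supp(\mathfrak D_r)} = \Omega_{\mathfrak D_r}$, which gives the commutativity of the right-hand triangle in \eqref{eq: CD with Omega lemma}.

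Next I would set $H = \ker\pi$ and let $i\colon H\hookrightarrow G$ be the inclusion, so that $H\stackrel{i}{\to}G\stackrel{\pi}{\to}{\mathbf S}(\mathfrak D_r)$ is exact at $G$; since $G$ is finite so is $H$. The one genuinely substantive point is that $H$ is abelian. An element $h\in H$ fixes each $r$-cycle $D_j\in\mathfrak D_r$ setwise; since these cycles are pairwise disjoint and their union is $\Sigma$, the group $H$ embeds into $\prod_{j=1}^{t}\bigl(\text{setwise stabilizer of }D_j\text{ in }{\mathbf S}(D_j)\bigr)$. The setwise stabilizer of a single $r$-cycle $D_j = (a_1\,a_2\,\cdots\,a_r)$ inside ${\mathbf S}(D_j)\cong S_r$ is the normalizer of the cyclic group $\langle D_j\rangle$, which is the dihedral-type group of order $r\cdot\varphi(r)$ (the holomorph of $\Z/r$), and this is \emph{not} abelian in general — so a further argument is needed. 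Here I would use that $h$ lies in the image of $\omega_{supp(\mathfrak D_r)}$, hence commutes with the restriction of $\widehat\sigma_1$ to $\Sigma$, because $\sigma_3,\ldots,\sigma_{k-1}$ all commute with $\sigma_1$ in $B_k$ and $\Sigma$ is invariant under all of $\widehat\sigma_1,\widehat\sigma_3,\ldots,\widehat\sigma_{k-1}$. Thus $H$ lies in the centralizer in ${\mathbf S}(\Sigma)$ of $\widehat\sigma_1|_\Sigma$; restricted to each $D_j$ this centralizer is $\langle D_j\rangle\cong\Z/r$, so $H\hookrightarrow\prod_{j=1}^{t}\Z/r$ and is abelian.

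The two squares assembled give the full commuting diagram \eqref{eq: CD with Omega lemma}, with the horizontal row exact and $G$, $H$ finite with $H$ abelian, as required. The coretraction statement \eqref{eq: CD with Omega lemma*} is proved by the identical argument with the roles of $\sigma_1$ and $\sigma_{k-1}$ interchanged: $\sigma_1,\ldots,\sigma_{k-3}$ commute with $\sigma_{k-1}$, the set $supp(\mathfrak E_r)$ is $\{\widehat\sigma_1,\ldots,\widehat\sigma_{k-3}\}$-invariant by Definition~\ref{def retractions}, one sets $G^{*} = Im(\omega_{supp(\mathfrak E_r)})$, $H^{*} = \ker\pi$, and the same centralizer computation shows $H^{*}\hookrightarrow(\Z/r)^{t}$. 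I expect the main obstacle to be precisely the abelianness of $H$: the naive bound via setwise stabilizers is insufficient, and one must exploit that $H$ centralizes $\widehat\sigma_1$ (a consequence of the braid commutation relations together with invariance of $\Sigma$) to cut the stabilizer down to the cyclic group generated by each $r$-cycle.
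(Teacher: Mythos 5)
Your proof is correct and follows essentially the same mechanism as the paper's: $\pi$ is the conjugation-induced permutation of the cycles $D_1,\ldots,D_t$, the row is exact at $G$ by taking $H=\ker\pi$, and abelianness of $H$ comes from the fact that a permutation commuting with $\mathcal D = D_1\cdots D_t$ and preserving each $supp(D_j)$ restricts there to a power of $D_j$ (the paper's Lemma~\ref{Lemma commuting permutations}$(b)$). The only structural difference is your choice of $G$: the paper takes $G$ to be the full centralizer of $\mathcal D$ in $\mathbf{S}(\Sigma)$ and $H=\langle D_1,\ldots,D_t\rangle\cong(\mathbb Z_r)^t$, then proves $H=\ker\pi$ and separately that $Im(\omega_{supp(\mathfrak D_r)})\subseteq G$; you take $G=Im(\omega_{supp(\mathfrak D_r)})$, which makes the inclusion of the image and the exactness automatic, at the cost that $H$ is only identified as a subgroup of $(\mathbb Z_r)^t$ — which is all the proposition needs. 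One small remark: your worry about the holomorph of $\mathbb Z/r$ is unnecessary, because $\pi(h)(D_j)=D_j$ means $h^{-1}D_jh=D_j$ as permutations, i.e.\ $h$ already centralizes each $D_j$ (not merely stabilizes its support or normalizes $\langle D_j\rangle$), so the reduction to $\langle D_j\rangle$ is immediate; the repair you supply — that elements of the image commute with $\widehat\sigma_1|_\Sigma=\mathcal D$ because $\sigma_3,\ldots,\sigma_{k-1}$ commute with $\sigma_1$ — is valid and is precisely the fact the paper uses to place the image inside its centralizer $G$, so nothing is lost either way.
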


\begin{proof}
Let us first prove the claim for diagram \eqref{eq: CD with Omega lemma}.

Denote the $t$ $r$-cycles in $\mathfrak D_r$ as $\{ D_1,\ldots, D_t \}$ and denote $\Sigma = supp(\mathfrak D_r)$.
As usual, we denote $\omega(\sigma_i)$ as $\widehat \sigma_i$ for each $i$, $1\leq i \leq k-1$.

Let $G$ be the centralizer of the element $\mathcal D = D_1\cdots D_t$ in 
$\mathbf{S}(\Sigma)$.Since any element $g\in G$ commutes with the element
$\mathcal D = D_1\cdots D_t$, we have

\begin{equation}\label{eq def pi HGS}
D_1\cdots D_t = g^{-1}\cdot D_1\cdots D_t \cdot g = g^{-1}D_1g \cdots g^{-1}D_tg\qquad\text{for any} \ \ g\in G,
\end{equation}

in \eqref{eq def pi HGS} we have that $D_1,\ldots ,D_t$ are disjoint $r$-cycles
as well as $g^{-1}D_1g,\ldots , g^{-1}D_tg$.
Hence there is a unique permutation $\tau_g\in{\mathbf S}(\mathfrak D_r)$ such that
$g^{-1} D_j g = \tau_g(D_j)$ for all $j$.
Let us define $\pi\colon G\to \mathbf{S}(\mathfrak D_r)\cong S_t$ as follows


\begin{equation}\label{eq def pi HGS'}
\pi(g) = \tau_g \quad \text{where} \quad \tau_g(D_i) = g^{-1}D_i g. 
\end{equation}

Let $H \cong (\mathbb Z_r)^t$ be the abelian subgroup of $\mathbf{S}(\Sigma)$ generated by the $r$-cycles 
$\{ D_1,\ldots, D_t\}$. Note that since each $D_i$ commutes with $\mathcal D$ we have that $H\subset G$. 
We will now prove that $H=Ker(\pi)$~: 

Suppose $\pi(g) = id$ for some $g\in G$. 
Then it follows from \eqref{eq def pi HGS'} that each $D_i$ commutes with $g$ and according to Lemma~\ref{Lemma commuting permutations}
this means that $g|_{supp(D_i)}$ is equal to some power of $D_i$, i.e., $g\in H$.

Conversely, if $h=\prod_j D_j^{c_j}\in H$ where $c_j\in \mathbb N$, $0\leq c_j\leq r-1$, then for each $i=1,\ldots,k-1$ we have

$$ h^{-1} D_i h = (\prod_j D_j^{c_j})^{-1} D_i \prod_j D_j^{c_j} = D_i $$   

so $h\in Ker(\pi)$. Hence $H = Ker(\pi)$ is an abelian normal subgroup of $G$.

Thereby, we obtain the exact sequence

\begin{equation}\label{exact HGS}
\xymatrix{
H\ar[r]^{i} & G\ar[r]^-{\pi} & \mathbf{S}(\mathfrak D_r)\cong S_t}
\end{equation}

where $i$ is the embedding of $H$ in $G$ and $\pi$ is as defined in \eqref{eq def pi HGS'}.

Consider the following diagram, which includes \eqref{exact HGS}

\begin{equation}\label{eq: CD with Omega''}
\xymatrix{
&& B_{k-2} \ar[d]_{\omega_{supp(\mathfrak D_r)}} \ar[rrd]^{\Omega_{\mathfrak D_r}} \\
H\ar[rr]^{i} && G\ar[rr]^{\pi} && \mathbf{S}(\mathfrak D_r)\cong S_t}
\end{equation}

First note that the image of $\omega_{supp(\mathfrak D_r)}$ indeed lies in $G$ : For each $i$, $3\leq i \leq k-1$, we have
by Definition~\ref{def retractions} that $\omega_{supp(\mathfrak D_r)}(\sigma_i)$ is equal to $\widehat \sigma_i|_{\Sigma}$
and that conjugation of $\widehat \sigma_1$ by $\widehat \sigma_i|_{\Sigma}$ induces a permutation of the $r$-cycles of $\mathfrak D_r$
which means in particular that each  $\widehat \sigma_i|_{\Sigma}$ commutes with $\mathcal D = D_1\cdots D_t$, hence
$Im(\omega_{supp(\mathfrak D_r)})\subseteq G$.

Furthermore, by Definition~\ref{def retractions}, $\Omega_{\mathfrak D_r}$ send $\sigma_i$, $3\leq i\leq k-1$, to the
permutation that is induced on the $r$-cycles in $\mathfrak D_r$ by conjugating $\widehat \sigma_1$ with $\widehat \sigma_i$
or, what is equivalent, by conjugating $\widehat \sigma_1$ with $\widehat \sigma_i|_{\Sigma}$.
But this is exactly $\pi \circ \omega_{supp(\mathfrak D_r)}(\sigma_i)$ according to the definition of $\pi$ in \eqref{eq def pi HGS'}.
This means that the diagram \eqref{eq: CD with Omega''} is commutative.

Now if $\mathfrak E_r = \{E_1,\ldots,E_t \}$ is an $r$-subcomponent of $\widehat \sigma_{k-1}$ with $t$ cycles and 
$\Omega_{\mathfrak E_r}$ is the coretraction of $\omega$ to the $r$-subcomponent 
$\mathfrak E_r$ and $\omega_{supp(\mathfrak E_r)}$ is the corresponding coreduction as defined in Definition~\ref{def retractions},
then the analogous claim about \eqref{eq: CD with Omega lemma*} is proved in a 
completely similar way where in this case $H^*\cong (\mathbb Z_r)^t$ is the
subgroup of $\mathbf{S}(supp(\mathfrak E_r))$ generated by the cycles of $\mathfrak E_r$ and $G^*$ is the centralizer
of the element $\mathcal E = E_1\cdots E_t$ in $\mathbf{S}(supp(\mathfrak E_r))$.

\end{proof}

\begin{corollary}
\label{cor cyclic omega}
Let $\omega\colon B_k \to S_n$ be a homomorphism. If the (co)retraction of $\omega$ to an $r$-subcomponent for some $r \ge 2$ is cyclic
then the (co)reduction of $\omega$ to that $r$-subcomponent is cyclic as well. \hfill $\bullet$ 
\end{corollary}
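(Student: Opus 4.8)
The plan is to deduce Corollary~\ref{cor cyclic omega} directly from the commuting diagram established in Proposition~\ref{prop commut retract}, together with the structure of the exact sequence appearing there. I treat the retraction case; the coretraction case is identical after replacing $\mathfrak D_r$ by $\mathfrak E_r$, $B_{k-2}$ by $B^*_{k-2}$, and $(H,G,\pi)$ by $(H^*,G^*,\pi)$.

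First I would set up notation: let $\mathfrak D_r$ be an $r$-subcomponent of $\widehat\sigma_1$ with $t$ cycles for which the retraction $\Omega_{\mathfrak D_r}\colon B_{k-2}\to\mathbf S(\mathfrak D_r)\cong S_t$ is defined, and let $\omega_{supp(\mathfrak D_r)}\colon B_{k-2}\to\mathbf S(supp(\mathfrak D_r))$ be the corresponding reduction. By Proposition~\ref{prop commut retract} we have $\Omega_{\mathfrak D_r}=\pi\circ\omega_{supp(\mathfrak D_r)}$, where $\pi\colon G\to S_t$ fits into the exact sequence $H\xrightarrow{i}G\xrightarrow{\pi}S_t$ with $H=\ker\pi$ abelian, indeed $H\cong(\Z_r)^t$. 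Assume $\Omega_{\mathfrak D_r}$ is cyclic; I must show $\omega_{supp(\mathfrak D_r)}$ is cyclic, i.e.\ (by Lemma~\ref{lemma cyclic homomorphism}) that $\omega_{supp(\mathfrak D_r)}(\sigma_i)=\omega_{supp(\mathfrak D_r)}(\sigma_{i+1})$ for some admissible consecutive pair, equivalently for all.

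The key step is the following observation. Write $\widehat s_i:=\omega_{supp(\mathfrak D_r)}(\sigma_i)=\widehat\sigma_i|_{\Sigma}$ for $3\le i\le k-1$, where $\Sigma=supp(\mathfrak D_r)$. Since $\pi\circ\omega_{supp(\mathfrak D_r)}=\Omega_{\mathfrak D_r}$ is cyclic, Lemma~\ref{lemma cyclic homomorphism} applied to the homomorphism $\Omega_{\mathfrak D_r}$ gives $\pi(\widehat s_1')=\cdots$ — more precisely, it gives $\Omega_{\mathfrak D_r}(\sigma_i)=\Omega_{\mathfrak D_r}(\sigma_{i+1})$ for all $i$, hence $\pi(\widehat s_i\widehat s_{i+1}^{-1})=1$, so $g_i:=\widehat s_i\widehat s_{i+1}^{-1}\in\ker\pi=H$ for each $i$. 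Now I invoke the braid-like relation: in $B_{k-2}$ consecutive generators satisfy $\sigma_i\infty\sigma_{i+1}$, hence $\widehat s_i\infty\widehat s_{i+1}$, i.e.\ $\widehat s_i\widehat s_{i+1}\widehat s_i=\widehat s_{i+1}\widehat s_i\widehat s_{i+1}$. The point is that $\widehat s_i$ and $\widehat s_{i+1}$ have the same image under $\pi$, so they lie in the same coset of the abelian normal subgroup $H$; writing $\widehat s_{i+1}=\widehat s_i h$ with $h\in H$ and substituting into the braid relation, and using that $H$ is abelian and normal, one computes that the braid relation collapses to a relation forcing $h$ to be trivial — concretely, $\widehat s_i$ and $\widehat s_{i+1}$ both commute with $h$ (since $h\in H$ and $H$ is central enough for this argument, or directly because $\widehat s_i h\widehat s_i^{-1}\in H$ and $H$ abelian), so $\widehat s_i$ and $\widehat s_{i+1}$ commute, and then Fact~\ref{fact braid commute} applied to the braid-like commuting pair $(\widehat s_i,\widehat s_{i+1})$ yields $\widehat s_i=\widehat s_{i+1}$.

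I expect the main obstacle to be the middle sub-step: verifying cleanly that $g_i\in H$ together with $\widehat s_i\infty\widehat s_{i+1}$ forces $[\widehat s_i,\widehat s_{i+1}]=1$. The cleanest route is to observe that $\pi(\widehat s_i)=\pi(\widehat s_{i+1})=:\tau$, so conjugation by $\widehat s_i$ and by $\widehat s_{i+1}$ induce the \emph{same} permutation $\tau$ of the cycles $D_1,\dots,D_t$; in particular $\widehat s_i\widehat s_{i+1}^{-1}=g_i$ centralizes each $D_j$ and hence lies in $H\cong(\Z_r)^t$, and since $H$ is abelian one gets $\widehat s_{i+1}g_i\widehat s_{i+1}^{-1}=\widehat s_{i+1}\widehat s_i\widehat s_{i+1}^{-1}\cdot \widehat s_{i+1}\widehat s_i^{-1}\cdot\ldots$ — the bookkeeping must be done carefully, but the underlying fact is just that two elements of a group lying in the same coset of an abelian normal subgroup and satisfying the braid relation must be equal, which is a short computation. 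Once $\widehat s_i=\widehat s_{i+1}$ for one $i$, Lemma~\ref{lemma cyclic homomorphism} propagates it to all generators, so $\omega_{supp(\mathfrak D_r)}$ is cyclic, completing the proof; the coretraction statement follows verbatim from the second half of Proposition~\ref{prop commut retract}.
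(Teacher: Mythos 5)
Your argument stands or falls on the claim you isolate yourself: that two braid-like elements lying in the same coset of an abelian normal subgroup (equivalently, with $\widehat s_i\widehat s_{i+1}^{-1}\in\ker\pi=H$) must be equal, or at least must commute. That claim is false, and no bookkeeping will fix it. Already in $S_3$ with $H=A_3$ the transpositions $(1,2)$ and $(1,3)$ are braid-like, lie in the same coset of the abelian normal subgroup $A_3$, and are neither equal nor commuting. The failure persists in the exact setting of Proposition~\ref{prop commut retract}: take $r=2$, $t=3$, so that $G$ is the centralizer of $(1,2)(3,4)(5,6)$ in $\mathbf S(\{1,\dots,6\})$, $H=\langle (1,2),(3,4),(5,6)\rangle\cong(\Z_2)^3$, and $\pi\colon G\to S_3$. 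The elements $g=(1,3,5)(2,4,6)$ and $g'=g\cdot(1,2)(3,4)=(1,4,5)(2,3,6)$ both lie in $G$, induce the same $3$-cycle on $\{D_1,D_2,D_3\}$ (so $\pi(g)=\pi(g')$ and $g^{-1}g'=(1,2)(3,4)\in H$), and a direct check gives $gg'g=g'gg'=(1,2)(5,6)$, so $g\infty g'$, yet $g\neq g'$ and $[g,g']\neq 1$. (In the wreath-product picture $G\cong \Z_2\wr S_3$, writing elements as $(v;\tau)$, the braid relation for $(v;\tau)$ and $(w;\tau)$ amounts to $(1+\tau+\tau^2)(v+w)=0$ over $\Z_2$, which has many nonzero solutions $v+w$.) So your correct observation that $\widehat s_i\widehat s_{i+1}^{-1}\in H$ cannot, by any local computation on a single consecutive pair, yield $\widehat s_i=\widehat s_{i+1}$.

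The paper closes this gap globally rather than pairwise: since $\Omega_{\mathfrak D_r}=\pi\circ\omega_{\Sigma}$ is cyclic, its image is abelian, so $\omega_{\Sigma}(B'_{k-2})\subseteq\ker\pi=H$; because $H$ is abelian, the image of $B'_{k-2}$ is an abelian quotient of $B'_{k-2}$ and hence trivial (here one uses that the commutator subgroup of the braid group is perfect in the relevant range, i.e.\ information involving all generators and both the braid and far-commutation relations at once), so $\omega_{\Sigma}$ factors through $B_{k-2}/B'_{k-2}\cong\Z$ and is cyclic; by Lemma~\ref{lemma cyclic homomorphism} this then gives the equality of all the $\widehat s_i$ that you were trying to obtain directly. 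That extra global input is exactly what your coset-plus-braid-relation step was meant to replace, and the counterexamples above show it cannot be dispensed with.
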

\begin{proof}

Let $\mathfrak D_r$ be an $r$-subcomponent of $\widehat \sigma_1$ with $t$ cycles. Denote the $t$ $r$-cycles in $\mathfrak D_r$
as $\{ D_1,\ldots, D_t \}$. Let $\Omega_{\mathfrak D_r}$ be the retraction of $\omega$ to the $r$-subcomponent 
$\mathfrak D_r$ (assuming it is well defined) 
and let $\omega_{supp(\mathfrak D_r)}$ be the corresponding reduction as defined in Definition~\ref{def retractions}.

For the rest of the proof let us denote $\Sigma = supp(\mathfrak D_r)$. According to Proposition~\ref{prop commut retract}
we have the following commuting diagram

\begin{equation}\label{eq: CD with Omega cor}
\xymatrix{
&& B_{k-2} \ar[d]_{\omega_{\Sigma}} \ar[rrd]^{\Omega_{\mathfrak D_r}} \\
H\ar[rr]^{i} && G\ar[rr]^{\pi} && \mathbf{S}(\mathfrak D_r)\cong S_t}
\end{equation}

where $B_{k-2}$ is the subgroup of $B_k$ generated by $\{\sigma_3,\ldots,\sigma_{k-1} \}$, $G$ is a finite group, $H$ is a finite abelian group 
and the horizontal line in \eqref{eq: CD with Omega cor} is an exact sequence.

Denote the commutator subgroup of $B_{k-2}$ as $B'_{k-2}$. Suppose $\Omega_{\mathfrak D_r}$ is cyclic. In particular, the image
of $\Omega_{\mathfrak D_r}$ is abelian. Hence, since the homomorphic image of any commutator
in an abelian group is trivial, we have that  $\Omega_{\mathfrak D_r}(B'_{k-2}) = \{1\}$. 
By Proposition~\ref{prop commut retract} $\Omega_{\mathfrak D_r} = \pi\circ \omega_{\Sigma}$ and so
 $\pi\circ \omega_{\Sigma}(B'_{k-2}) = \{1\}$. Hence $\omega_{\Sigma}(B'_{k-2})$ is contained in the 
abelian group $Ker(\pi)=H$ and again since the homomorphic image of any commutator
in an abelian group is trivial, we have that $\omega_{\Sigma}(B'_{k-2})=\{1\}$. But since $B_{k-2}/B'_{k-2}\cong \mathbb Z$
(see section~$1.3$ in \cite{Lin04b}), this means that $\omega_{\Sigma}$ factors 
through $B_{k-2}/B'_{k-2}\cong \mathbb Z$, i.e. $\omega_{\Sigma}$ is cyclic. 

The dual claim regarding the coretractions and their corresponding coreduction follows by the same arguments
applied to an $r$-subcomponent of $\widehat \sigma_{k-1}$ and using \eqref{eq: CD with Omega lemma*} of
Proposition~\ref{prop commut retract}.

\end{proof}

\section{New Permutation Representations of the Braid Group}\label{sec intro chap perm rep}

\subsection{Model and Standard Permutation Representations}\label{subsec model perm rep}

Permutation representations of braids were first studied by Artin in \cite{Artin2} where he proves

\noindent{ARTIN THEOREM}
\index{Artin Theorem\hfill|phantom\hfill}
\index{Theorem!of Artin\hfill}
\index{Theorem!on homomorphisms!$B_k\to{\mathbf S_k}$\hfill}
\index{Homomorphisms!$B_k\to{\mathbf S_k}$\hfill}
{\sl Let $\psi\colon B_k\to{\mathbf S_k}$ be a non-cyclic
transitive homomorphism. Denote $\alpha = \sigma_1\sigma_2\cdots \sigma_{k-1}$. 
($\alpha$ and $\sigma_1$ generate $B_k$, see \cite{Lin04b})
\vskip0.2cm

$a)$ If $k\ne 4$ and $k\ne 6$ then $\psi$ is conjugate to the
canonical epimorphism $\mu=\mu_k$.
\vskip0.2cm

$b)$ If $k = 6$ and $\psi\not\sim\mu_6$
then $\psi$ is conjugate to the homomorphism $\nu_6$ 
\index{Homomorphism $\nu_6$\hfill}
\index{Artin homomorphism $\nu_6$\hfill}
\index{$\nu_6$\hfill}
defined by
\begin{equation}\label{Artin homomorphism vu6}
\nu_6(\sigma_1) = (1,2)(3,4)(5,6),
\qquad \nu_6(\alpha) = (1,2,3)(4,5).
\end{equation}

$c)$ If $k = 4$ and $\psi\not\sim\mu_4$ 
then $\psi$ is conjugate to one of the 
three homomorphisms $\nu_{4,1}$, $\nu_{4,2}$ and $\nu_{4,3}$
\index{Homomorphisms!$\nu_{4,1}$, \ $\nu_{4,2}$, \ $\nu_{4,3}$\hfill}
\index{Artin homomorphisms $\nu_{4,1}$, $\nu_{4,2}$, $\nu_{4,3}$\hfill}
\index{$\nu_{4,1}$, $\nu_{4,2}$, $\nu_{4,3}$\hfill}
defined by
\begin{equation}\label{Artin homomorphisms vu41-43}
\aligned
\nu_{4,1}(\sigma_1) &= (1,2,3,4),\qquad \\
\nu_{4,2}(\sigma_1) &= (1,3,2,4),\qquad \\
\nu_{4,3}(\sigma_1) &= (1,2,3), \qquad  \\
\endaligned
\aligned
\nu_{4,1}(\alpha) &= (1,2);\qquad \\
\nu_{4,2}(\alpha) &= (1,2,3,4);\qquad \\
\nu_{4,3}(\alpha) &= (1,2)(3,4);\qquad \\
\endaligned
\aligned
\lbrack\nu_{4,1}(\sigma_3) &= \nu_{4,1}(\sigma_1)\rbrack\\
\lbrack\nu_{4,2}(\sigma_3) &= \nu_{4,2}(\sigma_1^{-1})\rbrack\\
\lbrack\nu_{4,3}(\sigma_3) &= \nu_{4,3}(\sigma_1)\rbrack.\\
\endaligned
\end{equation}

$d)$ $\psi$ is surjective except for the case
when $k=4$, $\psi\sim\nu_{4,3}$ and $Im (\psi) ={\mathbf A_4}$.}

\hfill $\blacksquare$

The next important work on the subject was done in \cite{Lin04b}. Among other results, Lin classifies
homomorphisms $B_k\to S_{2k}$ for $k > 6$ and we quote this result here

\begin{definition}
\label{Def B_k S_2k}
\index{Model homomorphisms $B_k\to{\mathbf S}(2k)$\hfill}
\index{Standard homomorphisms $B_k\to{\mathbf S}(2k)$\hfill}
\index{Homomorphisms!$B_k\to{\mathbf S}(2k)$!model\hfill}
\index{Homomorphisms!$B_k\to{\mathbf S}(2k)$!standard\hfill}
The following three homomorphisms
$\varphi_1,\varphi_2,\varphi_3\colon B_k\to{\mathbf S_{2k}}$
are called the \textbf{model} ones:
$$
\aligned
&\varphi_1(\sigma_i) = 
\underbrace{(2i-1,2i+2,2i,2i+1)}_{\text {$4$-cycle}};\\
&\varphi_2(\sigma_i)=(1,2)(3,4)\cdot\cdot\cdot (2i-3,2i-2)
\underbrace{(2i-1,2i+1)(2i,2i+2)}_{\text {two transpositions}}\times\\
&\hskip2.9in
\times
(2i+3,2i+4)\cdot\cdot\cdot(2k-1,2k);\\
&\varphi_3(\sigma_i)= (1,2)(3,4)\cdot\cdot\cdot (2i-3,2i-2)
\underbrace{(2i-1,2i+2,2i,2i+1)}_{\text {$4$-cycle}}\times\\
&\hskip2.9in
\times(2i+3,2i+4)\cdot\cdot\cdot (2k-1,2k);\\
\endaligned
$$
in each of the above formulas $i=1,...,k-1$.
A homomorphism \ $\psi\colon B_k\to{\mathbf S_{2k}}$ \
is said to be \textbf{standard} if it is conjugate to one of
the three model homomorphisms \ $\varphi_1$, \ 
$\varphi_2$, \ $\varphi_3$. \hfill $\bullet$
\end{definition}

We will use the following 

\begin{lemma}\label{lemma cyclic part model}
Let $\phi\colon B_k\to S_{2k}$ be a standard homomorphism which is conjugate to either of the model homomorphisms
$\varphi_2$ or $\varphi_3$ as defined in Definition~\ref{Def B_k S_2k}. If there exist permutations
$R, R_1,\ldots, R_{k-1}\in S_{2k}$ with $R$ disjoint from $ R_1,\ldots, R_{k-1}$, such that 

$$ \phi(\sigma_i) = R\cdot R_i \qquad \text{for} \quad i=3,\ldots,k-1 $$ 

or 

$$ \phi(\sigma_i) = R\cdot R_i \qquad \text{for} \quad i=1,\ldots,k-3 $$

then $|supp(R)| \leq 4$.
\end{lemma}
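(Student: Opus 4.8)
The plan is to reduce immediately to the case where $\phi$ is one of the model homomorphisms. If $\phi=\varphi_a^{\,g}$ with $a\in\{2,3\}$ and $g\in S_{2k}$, then conjugating the identities $\phi(\sigma_i)=R\cdot R_i$ by $g$ gives $\varphi_a(\sigma_i)=(gRg^{-1})(gR_ig^{-1})$, and since conjugation preserves both disjointness of supports and the cardinality of a support, it suffices to prove the bound for $\phi=\varphi_2$ and for $\phi=\varphi_3$. Write $\Sigma=supp(R)$. Because $R$ is disjoint from each $R_i$, the set $\Sigma$ is $\phi(\sigma_i)$-invariant for every $i$ in the relevant range, and the restriction $\phi(\sigma_i)|_\Sigma=R|_\Sigma$ is one single fixed-point-free permutation of $\Sigma$, independent of $i$. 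Thus everything reduces to the following: bound the size of a set $\Sigma$ on which all the $\phi(\sigma_i)$, $i$ ranging over $\{3,\dots,k-1\}$ (resp. $\{1,\dots,k-3\}$), agree and act without fixed points.

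The key input is the explicit shape of the model generators. For every $i$ both $\varphi_2(\sigma_i)$ and $\varphi_3(\sigma_i)$ transpose each point $p$ lying outside the block $B_i=\{2i-1,2i,2i+1,2i+2\}$ with its partner $p'$ in the fixed pair $\{2\lceil p/2\rceil-1,\,2\lceil p/2\rceil\}$, while on $B_i$ they act by $(2i-1,2i+1)(2i,2i+2)$, resp. by the $4$-cycle $(2i-1,2i+2,2i,2i+1)$. Consequently a point $p$ lies in $B_i$ only for the at most two consecutive indices $i\in\{\lceil p/2\rceil-1,\lceil p/2\rceil\}$; for every other $i$ one has $\phi(\sigma_i)(p)=p'$; and for each of those one or two exceptional indices a four-case check (in each of $\varphi_2$, $\varphi_3$) shows $\phi(\sigma_i)(p)\in B_i\setminus\{p'\}$. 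Now let $p\ge 5$. Then $\lceil p/2\rceil\ge 3$, so at least one exceptional index for $p$ lies in $\{3,\dots,k-1\}$ (use $k-1$ in the borderline case $\lceil p/2\rceil=k$); and once $k$ is not too small — $k\ge 6$ certainly suffices, since then $\{3,\dots,k-1\}$ has at least three elements and at most two of them can be exceptional for $p$ — there is also an ordinary index for $p$ in $\{3,\dots,k-1\}$. The ordinary one sends $p$ to $p'$, the exceptional one sends $p$ into $B_i\setminus\{p'\}$, so the $\phi(\sigma_i)$ do not all agree at $p$ and $p\notin\Sigma$. Hence $\Sigma\subseteq\{1,2,3,4\}$, i.e. $|supp(R)|\le 4$; the bound is sharp, since every $\varphi_a(\sigma_i)$ with $i\ge 3$ acts on $\{1,2,3,4\}$ as $(1,2)(3,4)$.

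The remaining alternative, the identities $\phi(\sigma_i)=R\cdot R_i$ for $i=1,\dots,k-3$, is reduced to the one just treated by the symmetry $\sigma_i\mapsto\sigma_{k-i}$ of $B_k$ together with the point-reversal $\theta\colon j\mapsto 2k+1-j$, which conjugates $\varphi_a(\sigma_i)$ into $\varphi_a(\sigma_{k-i})$; under it $\Sigma$ ends up inside $\{2k-3,2k-2,2k-1,2k\}$ and the same bound follows. (Equally well, the argument of the previous paragraph goes through verbatim with the roles of small and large indices exchanged.) The only step that demands genuine care — and the one I expect to be the main obstacle — is the index bookkeeping of the second paragraph: for the extreme points $p=5$ and $p\in\{2k-3,\dots,2k\}$ one has to verify explicitly that a pair of indices witnessing the non-constancy of $i\mapsto\phi(\sigma_i)(p)$ genuinely lies inside $\{3,\dots,k-1\}$, which is where the modest lower bound on $k$ is used.
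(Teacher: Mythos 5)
Your proposal is correct, but it runs on a different engine than the paper's proof, so a comparison is worth recording. The paper also reduces to the model case by conjugation, but then works at the level of cycle decompositions: writing $\varphi_2(\sigma_i)=(1,2)(3,4)\cdot S_i$ for $i=3,\dots,k-1$, it notes that every cycle of $R$ must be a cycle of $(1,2)(3,4)S_i$ for all $i$ in the range, and a comparison with a suitably chosen second index (the $l'$ and $l''$ in the paper) excludes any cycle supported in $\{5,\dots,2k\}$, forcing $R$ to be one of $id,(1,2),(3,4),(1,2)(3,4)$. You argue pointwise instead: all $\phi(\sigma_i)$ in the range restrict to the single fixed-point-free permutation $R|_{supp(R)}$ of $supp(R)$, and for every $p\ge 5$ you produce one index in the range sending $p$ to its pair-partner and one (an index whose block contains $p$) sending it elsewhere, so $p\notin supp(R)$ and $supp(R)\subseteq\{1,2,3,4\}$. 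Your route is cleaner in two respects: it treats $\varphi_2$ and $\varphi_3$ uniformly, since the only fact used about the block action is that it never maps a point to its partner, and it replaces the paper's cycle bookkeeping and ad hoc choices of comparison indices by a transparent count of ordinary versus exceptional indices; it also makes explicit the mild hypothesis $k\ge 6$ needed for that count, which the paper's proof requires just as implicitly (its $l''$ step needs the same room) and which holds wherever the lemma is invoked, namely for braid index exceeding $6$. Two minor caveats, neither a gap: your reversal $\theta(j)=2k+1-j$ conjugates $\varphi_2(\sigma_i)$ exactly to $\varphi_2(\sigma_{k-i})$, but for $\varphi_3$ it inverts the block $4$-cycle, so the conjugate is not literally $\varphi_3(\sigma_{k-i})$; this is harmless because, as you note, the direct argument applies verbatim to the range $\{1,\dots,k-3\}$ (and the inverted $4$-cycle still never sends a point to its partner). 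Finally, the paper's cycle-level argument yields slightly more information, namely the exact list of possible $R$, but the lemma only needs $|supp(R)|\le 4$.
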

\begin{proof}

Let us prove the claim in case $\phi$ is conjugate to $\varphi_2$ and $i=3,\ldots,k-1$. The other three
cases ($\phi$ is conjugate to $\varphi_3$ or $i=1,\ldots,k-3$) follow in a completely similar way.

First assume $\phi=\varphi_2$. Recall the definition
of this model homomorphism (see Definition~\ref{Def B_k S_2k})

$$
\aligned
&\varphi_2(\sigma_i)=(1,2)(3,4)\cdot\cdot\cdot (2i-3,2i-2)
\underbrace{(2i-1,2i+1)(2i,2i+2)}_{\text {two transpositions}}\times\\
&\hskip2.9in
\times
(2i+3,2i+4)\cdot\cdot\cdot(2k-1,2k);\\
\endaligned
$$

We see that

\begin{equation}\label{100.1}
\varphi_2(\sigma_i) = (1,2)(3,4)\cdot S_i \qquad \text{for} \quad i=3,\ldots,k-1  
\end{equation}

where

$$ 
\aligned
S_i = (5,6)(7,8)\cdot (9,10)\cdot\cdot\cdot (2i-3,2i-2)\cdot
(2i-1,2i+1)(2i,2i+2)\cdot \\
\hskip2.9in
\cdot
(2i+3,2i+4)\cdot\cdot\cdot(2k-1,2k) 
\endaligned
$$

for $i=3,\ldots,k-1$. 

Now suppose that

\begin{equation}\label{100.2}
\varphi_2(\sigma_i) = R\cdot R_i \qquad \text{for} \quad i=3,\ldots,k-1 
\end{equation}

for some $R,R_i\in S_{2k}$ where $R$ is disjoint from the $R_i$. We claim that $R$ is disjoint from each $S_i$
in \eqref{100.1} for $i=3,\ldots,k-1$. For suppose, by way of contradiction, that $supp(R)\cap supp(S_j)\neq \emptyset$ 
for some $j$,$3\leq j \leq k-1$. 
Then equating \eqref{100.1} and \eqref{100.2} for $i=j$ gives $R\cdot R_j = (1,2)(3,4)S_j$ and so the cyclic decomposition of $R$
contains some cycle in the cyclic decomposition of $S_j$. This means that the cyclic decomposition of $R$ contains some cycle 
of the form $(l,l+1)$ or $(l,l+2)$ for some $l$, $5 \leq l\leq 2k-1$. Now if the cyclic decomposition of $R$ contains $(l,l+1)$
then equating \eqref{100.1} and \eqref{100.2} for $l'=min(\lfloor \frac{l+1}{2} \rfloor, k-1)$ gives a contradiction since 
the cyclic decomposition of $S_{l'}$ does not contain $(l,l+1)$. Furthermore, if 
the cyclic decomposition of $R$ contains $(l,l+2)$ then equating \eqref{100.1} and \eqref{100.2} for any
$l''$, $3\leq l'' \leq k-1$, such that $|l''-l'|>1$, gives a contradiction since 
the cyclic decomposition of $S_{l''}$ does not contain $(l,l+2)$.

We conclude that $R$ in \eqref{100.2} is disjoint from each $S_i$ in \eqref{100.1} and by equating
\eqref{100.1} and \eqref{100.2} we see that each cycle in the cyclic decomposition of $R$ must be included
in the cyclic decomposition of $(1,2)(3,4)$. Hence $R$ could be only one of four 
possibilities : $id, \ (1,2),\  (3,4)$ or $(1,2)(3,4)$. In any case $|supp(R)|\leq 4$.

Now suppose $\phi$ is conjugate to $\varphi_2$ and 

$$ \phi(\sigma_i) = R\cdot R_i \qquad \text{for} \quad i=3,\ldots,k-1 $$ 

for some $R, R_1,\ldots, R_{k-1}\in S_{2k}$ where $R$ is disjoint from $R_1,\ldots, R_{k-1}$. By definition there exists a $C\in S_{2k}$ such that
$\varphi_2 = \phi^C $, so we can write

$$ \varphi_2(\sigma_i) = R^C\cdot R^C_i \qquad \text{for} \quad i=3,\ldots,k-1 $$ 

but clearly $R^C$ is disjoint from the $R^C_i$ and according to the argument above we have 
that $|supp(R^C)| \leq 4$ and so necessarily $|supp(R)|\leq 4$ which concludes the proof.







\end{proof}

The following results are proved in \cite{Lin04b}, we give the references within.

\begin{theorem}\label{Thm F}
$a)$ For $6<k<n<2k$ every transitive
homomorphism $B_k\to{\mathbf S_n}$ is cyclic (Theorem F~$(a)$ in \cite{Lin04b}).

$b)$ 
\index{Standard homomorphisms $B_k\to{\mathbf S}(2k)$\hfill}
\index{Homomorphisms!$B_k\to{\mathbf S}(2k)$!standard\hfill}
\index{Theorem!on homomorphisms!$B_k\to{\mathbf S_{2k}}$ for $k>6$\hfill}
For $k>6$ every non-cyclic transitive homomorphism
$\psi\colon B_k\to{\mathbf S_{2k}}$
is standard (Theorem F~$(b)$ in \cite{Lin04b}).

$c)$
For $6<k<n<2k$ every non-cyclic homomorphism
$\psi\colon B_k\to{\mathbf S_n}$ is
conjugate to a homomorphism of the form $\mu_k\times\widetilde\psi$,
where $\widetilde\psi\colon B_k\to{\mathbf S_{n-k}}$
is a cyclic homomorphism (Theorem $7.26$~$(b)$ in \cite{Lin04b}).

$d)$
for $k>6$ every non-cyclic homomorphism
$\psi\colon B_k\to{\mathbf S_k}$ is conjugate
to $\mu_k$ (Theorem $3.9$ in \cite{Lin04b}).

$e)$
for $k > 4$ and $n<k$ every homomorphism
$\psi\colon B_k\to{\mathbf S_n}$ is cyclic (Theorem A~$(a)$ in \cite{Lin04b}).
\end{theorem}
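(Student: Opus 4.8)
The five assertions $(a)$--$(e)$ are quoted from \cite{Lin04b} --- they are, respectively, Theorem~F$(a)$, Theorem~F$(b)$, Theorem~$7.26(b)$, Theorem~$3.9$ and Theorem~A$(a)$ there --- so strictly speaking the proof is the citation. It is nevertheless worth indicating the shape of the argument, since the retraction/coretraction machinery of Section~\ref{sec defs} is precisely the engine behind it. The common strategy is an induction on $k$, anchored at small $k$ by Artin's theorem for $B_k\to{\mathbf S_k}$ and by its explicit computations for $B_4$ and $B_6$, combined with a support-counting analysis of the permutations $\widehat\sigma_i=\psi(\sigma_i)$.

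The starting point --- already used in Lemma~\ref{lemma cyclic homomorphism} --- is that $\widehat\sigma_i$ and $\widehat\sigma_j$ commute when $|i-j|\ge 2$ and form a braid-like pair when $|i-j|=1$. One first bounds $|supp(\widehat\sigma_i)|$: were it large, these commuting/braid-like constraints among $\widehat\sigma_1,\dots,\widehat\sigma_{k-1}$ would force $n\ge 2k$. Hence for $n<2k$ each $\widehat\sigma_i$ is, up to the part of its support on which the induced action is cyclic, a transposition on a $k$-point ``spine''. Splitting off the cyclic part is where Proposition~\ref{prop commut retract} and Corollary~\ref{cor cyclic omega} enter: a cyclic (co)retraction forces the corresponding (co)reduction to be cyclic, so after removing the cyclic summand one is left with a transitive homomorphism onto ${\mathbf S_k}$, which by part $(d)$ --- Artin's theorem, with $k>6$ excluding the exceptional values --- must be $\mu_k$. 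This gives the normal form $\mu_k\times\widetilde\psi$ of $(c)$ with $\widetilde\psi$ cyclic, and $(a)$ follows immediately: a disjoint product over two nonempty blocks leaves each block invariant, hence cannot be transitive on $\Delta_n$ unless the second block is empty, i.e.\ unless $n=k$; so for $6<k<n<2k$ no transitive non-cyclic homomorphism exists. Part $(e)$ reduces, via the orbit decomposition, to the absence of a transitive non-cyclic homomorphism $B_k\to{\mathbf S_m}$ with $1<m<k$; granting that, every orbit reduction of a $\psi\colon B_k\to{\mathbf S_n}$ with $n<k$ is cyclic, the $\widehat\sigma_i$ then agree on every orbit, and $\psi$ is cyclic by Lemma~\ref{lemma cyclic homomorphism}.

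The delicate case is $(b)$, where $n=2k$ and each $\widehat\sigma_i$ may carry a genuinely non-trivial ``generic'' part of support size up to $4$ --- two transpositions or a $4$-cycle, as in the model homomorphisms $\varphi_1,\varphi_2,\varphi_3$ of Definition~\ref{Def B_k S_2k}. One enumerates the local shapes of a consecutive triple $\widehat\sigma_{i-1},\widehat\sigma_i,\widehat\sigma_{i+1}$ compatible with the braid and commutation relations, shows these propagate coherently along the whole chain $\widehat\sigma_1,\dots,\widehat\sigma_{k-1}$, and matches the resulting global pattern against $\varphi_1$, $\varphi_2$, $\varphi_3$; the retraction bookkeeping --- which cycles of $\widehat\sigma_1$ get permuted under conjugation --- again does the combinatorial work, and Lemma~\ref{lemma cyclic part model} controls the size of the common factor. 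The main obstacle is exactly this step: ruling out spurious local configurations and proving that a pattern which is only \emph{locally} model-like is \emph{globally} conjugate to one of the three models and not to some twisted hybrid. This is where $k>6$ is needed --- to leave enough room for the conjugating permutations to be assembled consistently --- and since it is carried through in detail in \cite{Lin04b}, here we only invoke the statements.
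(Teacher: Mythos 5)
Your proposal is correct in the only sense that matters here: the paper itself offers no proof of this statement beyond the citations to \cite{Lin04b} (Theorem~F$(a)$, $(b)$, Theorem~$7.26(b)$, Theorem~$3.9$, Theorem~A$(a)$), and you rely on exactly those citations, so the approach coincides with the paper's. Your accompanying sketch of Lin's argument is a reasonable gloss and is not load-bearing, though note that Lemma~\ref{lemma cyclic part model} is a lemma of the present paper used later in Section~\ref{section B_k S_3k supp=mk}, not part of Lin's machinery.
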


\subsection{New Model and Standard Permutation Representations}\label{subsec new perm rep}

In this subsection we define new model and standard permutation representations of the braid group, extending
those defined in Definition~\ref{Def B_k S_2k}. We begin with a few notations and preliminary results.

\begin{notation}\label{notation A}
Throughout the paper we shall use the following notation : 
For each $m,i \in \mathbb N$, we denote by $A^m_i$ the following permutation

\begin{equation}\label{def A_i}
A^m_i = (1 + m(i-1),\ldots, mi) 
\end{equation}

so each $A^m_i$ is an $m$-cycle in $\mathbf{S}(\{ 1 + m(i-1),\ldots,mi \})$. We shall sometimes denote $A^m_i$ as $A_i$ when $m$ is 
understood from the context. Also, we denote for each $j$, $0\leq j\leq m-1$

$$ a^i_j = 1 + m(i-1) + j \in supp(A^m_i) $$

Furthermore, we denote $\bigtriangledown_m = \{0,\ldots,m-1\} \subset \mathbb Z$.

Finally, for $a\in \mathbb Z$ we shall denote by $|a|_m\in \bigtriangledown_m$ the residue of $a$ modulo $m$
(e.g. $|8|_5 = 3\in \bigtriangledown_5 \subset \mathbb Z$).
\hfill $\bullet$
\end{notation}

We now prove

\begin{lemma}\label{lemma C}
Let $2\leq m,r \in \mathbb N$ and let $\sigma\in S_r$ be some permutation, then for every choice of $r$ elements

$$ t_i \in \mathbb \bigtriangledown_m \quad 1 \leq i \leq r $$

we can define an element $C\in \mathbf{S}( \cup_{i=1\ldots r} supp(A^m_i) ) \cong S_{mr}$ such that

\begin{equation}\label{eq C}
(A^m_i)^{C^{-1}} = A^m_{\sigma(i)} \quad 1 \leq i \leq r
\end{equation}

by the formula

\begin{equation}\label{form C}
C(a^i_j) = a^{\sigma(i)}_{|j + t_{\sigma(i)}|_m}, \quad a^i_j = 1 + m(i-1) + j \in Supp(A_i) 
\end{equation}

for each $1\leq i \leq r$ and $j\in \mathbb \bigtriangledown_m$. 

If $\sigma$ is an $r$-cycle, the cyclic decomposition
of $\ C$ consists of exactly $\frac{m}{l}$ $lr$-cycles where $l$ is the minimal number
between $1$ and $m$ such that


$$ l\cdot \sum_{j=0}^{r-1} t_{\sigma^j(i)} (=  l\cdot \sum_{j=0}^{r-1} t_{j}) \equiv 0 (\text{mod} \ m). $$

Hence in this case $\sum_{j=0}^{r-1} t_{\sigma^j(i)}$ determines the cycle
structure of $\ C$. 

Furthermore, each $C\in \mathbf{S}( \cup_{i=1\ldots r} supp(A^m_i) )$ which satisfies 
equation~\ref{eq C} is given by the formula~\ref{form C} for some choice of $\ t_i\in  \bigtriangledown_m$.

\end{lemma}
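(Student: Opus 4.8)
The plan is to establish the four assertions in turn, all by elementary bookkeeping on the point set $\Sigma := \bigcup_{i=1}^{r} supp(A^m_i)$, using only that $A^m_i$ is the single $m$-cycle with $A^m_i(a^i_j)=a^i_{|j+1|_m}$ and that $\sigma$ is a bijection of $\{1,\dots,r\}$. First I would check that \eqref{form C} genuinely defines an element of $\mathbf{S}(\Sigma)\cong S_{mr}$: for each fixed $i$ the assignment $j\mapsto |j+t_{\sigma(i)}|_m$ is a bijection of $\bigtriangledown_m$, so $C$ carries $supp(A^m_i)$ bijectively onto $supp(A^m_{\sigma(i)})$, and since $\sigma$ permutes the blocks, $C$ is a bijection of $\Sigma$. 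Next, since $(A^m_i)^{C^{-1}}=CA^m_iC^{-1}$, verifying \eqref{eq C} reduces to checking $C\circ A^m_i = A^m_{\sigma(i)}\circ C$ pointwise on $\Sigma$: on a point $a^i_j$ of the $i$-th block both sides give $a^{\sigma(i)}_{|j+1+t_{\sigma(i)}|_m}$, while on a point $a^k_j$ with $k\neq i$ both $A^m_i$ and $A^m_{\sigma(i)}$ act as the identity (the latter because $\sigma(k)\neq\sigma(i)$), so the identity collapses to $C=C$.

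For the cycle structure I would assume $\sigma$ is an $r$-cycle and first record, by a one-line induction, that $C^{p}(a^i_j)=a^{\sigma^{p}(i)}_{|j+\sum_{q=1}^{p}t_{\sigma^{q}(i)}|_m}$. Because $\sigma^{r}=id$ and $\sigma$ is transitive on $\{1,\dots,r\}$, this specializes to $C^{r}(a^i_j)=a^i_{|j+T|_m}$ with $T=\sum_{q=0}^{r-1}t_{\sigma^{q}(i)}=t_1+\cdots+t_r$, manifestly independent of $i$. Now the $C$-orbit of any $a^i_j$ can return to the $i$-th block only after a number of steps divisible by $r$ (since $\sigma^{n}(i)=i$ iff $r\mid n$), and $C^{sr}(a^i_j)=a^i_j$ exactly when $m\mid sT$; the least such positive $s$ is precisely the $l$ of the statement (equivalently $l=m/\gcd(T,m)$, which is indeed a number between $1$ and $m$). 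Hence every $C$-orbit has length exactly $lr$, so there are $|\Sigma|/(lr)=mr/(lr)=m/l$ of them, which is the asserted decomposition into $\tfrac{m}{l}$ cycles of length $lr$; the closing sentence of the statement is then just the observation that $l$, and hence the whole cycle type, depends only on $T=\sum_{j=0}^{r-1}t_{\sigma^j(i)}$. This is the one step I expect to require genuine care — making sure the orbit length is uniform across all $mr$ points and is truly $lr$ rather than some proper divisor — so I regard it as the (mild) main obstacle.

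Finally, for the converse I would take any $C\in\mathbf{S}(\Sigma)$ with $CA^m_iC^{-1}=A^m_{\sigma(i)}$ for all $i$. From $supp(CA^m_iC^{-1})=C(supp(A^m_i))$ it follows that $C(supp(A^m_i))=supp(A^m_{\sigma(i)})$, so $C$ maps the $i$-th block onto the $\sigma(i)$-th, and its restriction $\beta_i:=C|_{supp(A^m_i)}$ intertwines the two $m$-cycles, $\beta_i\circ A^m_i = A^m_{\sigma(i)}\circ\beta_i$. Writing $\beta_i(a^i_0)=a^{\sigma(i)}_{s_i}$ for some $s_i\in\bigtriangledown_m$ and applying $A^m_i$ repeatedly forces $\beta_i(a^i_j)=a^{\sigma(i)}_{|j+s_i|_m}$ for every $j$; setting $t_k:=s_{\sigma^{-1}(k)}$ then turns this into exactly \eqref{form C}, which completes the proof.
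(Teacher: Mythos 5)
Your proposal is correct and follows essentially the same route as the paper: check injectivity of the formula blockwise, iterate $C$ to get $C^{p}(a^i_j)=a^{\sigma^{p}(i)}_{|j+\sum t|_m}$ and read off that each orbit has length $lr$ giving $m/l$ cycles, and for the converse determine $C$ on each block from its value at $a^i_0$ via the intertwining relation. The only differences are cosmetic improvements: you explicitly verify the conjugation identity \eqref{eq C} pointwise (which the paper leaves implicit) and you spell out why an orbit can only close up after a multiple of $r$ steps, a point the paper glosses over.
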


\begin{proof}

To show that equation~\ref{form C} defines $C$ as a permutation in $S_{mr}$ it is enough to show that it is injective 
as a map from $\Delta_{mr}$ to $\Delta_{mr}$. Assume then that $C(a^{i_1}_{j_1})=C(a^{i_2}_{j_2})$, by equation~\ref{form C} this means that

$$ a^{\sigma(i_1)}_{|j_1 + t_{\sigma(i_1)}|_m} = a^{\sigma(i_2)}_{|j_2 + t_{\sigma(i_2)}|_m} $$

so necessarily $i_1=i_2$ and

$$ j_1 + t_{\sigma(i_1)} \equiv  j_2 + t_{\sigma(i_2)} (\text{mod} \ m)$$

which means that

$$ j_1 \equiv j_2 (\text{mod} \ m) $$

but since $j_1,j_2 \in \bigtriangledown_m$ this means that $j_1=j_2$ and so $C$ is indeed a well defined permutation.

To see the cyclic decomposition of $C$ in case $\sigma$ is an $r$-cycle, note that for any $n\in \mathbb N$ 

$$ C^n(a^i_j) = a^{\sigma^n(i)}_{|j + t_{\sigma(i)} + \cdots + t_{\sigma^n(i)}|_m} $$

hence, since $\sigma \in S_r$ is an $r$-cycle, we have that

$$ C^r(a^i_j) = a^{i}_{|j + t_i + t_{\sigma(i)} + \cdots + t_{\sigma^{r-1}(i)}|_m} $$

and

$$ C^{lr}(a^i_j) = a^{i}_{|j + l(t_i + t_{\sigma(i)} + \cdots + t_{\sigma^{r-1}(i)})|_m} $$

hence the minimal $n$ such that $C^n(a^i_j) = a^i_j$ is $n=rl$ where $l$ is the minimal number between $1$ and $m$
such that 

$$ l(t_i + t_{\sigma(i)} + \cdots + t_{\sigma^{r-1}(i)}) \equiv 0 (\text{mod} \ m) $$

so the cyclic decomposition of $C$ consists of exactly $\frac{m}{l}$ $lr$-cycles.

Now let $C\in S_{mr}$ satisfy the following equation

$$ (A^m_i)^{C^{-1}} = A^m_{\sigma(i)} \quad 1 \leq i \leq k $$

then clearly we have that for each $1\leq i \leq r$ there exists some $t_i \in \bigtriangledown_m$ such that

$$ C(a^i_0) = a^{\sigma(i)}_{|t_{\sigma(i)}|_m} $$

but then necessarily 

$$ C(a^i_j) = a^{\sigma(i)}_{|j + t_{\sigma(i)}|_m} $$

\end{proof}

\begin{notation}\label{notation C}



Let $\sigma \in \mathbf{S}(\{j,\ldots,j+r-1 \})\cong S_r$ be a permutation
and let $t_1,\ldots,t_r\in \bigtriangledown_m$. We denote by $C^{\sigma,m}_{t_1,\ldots,t_r}$
the permutation in $\mathbf{S}(\{ \cup_{i=j\ldots j+r-1} supp(A^m_i) \}) \cong S_{mr}$ which is given by the
following formula (see notation~\ref{notation A})

\begin{equation}\label{form notation C}
C^{\sigma,m}_{t_1,\ldots,t_r}(a^i_q) = a^{\sigma(i)}_{|q + t_{\sigma(i)}|_m}, \quad a^i_q = 1 + m(i-1) + q \in Supp(A^m_i) 
\end{equation}

for each $j\leq i \leq j+r-1$ and $q\in \mathbb \bigtriangledown_m$.

(that \eqref{form notation C} defines a permutation - see remark~\ref{rem C m=1}).

\hfill $\bullet$
\end{notation}

\begin{rem}\label{rem C m=1}

To see that \eqref{form notation C} indeed defines a permutation we have to show that $C^{\sigma,m}_{t_1,\ldots,t_r}(a^i_q)$
as a function on $\cup_{i=j\ldots j+r-1} supp(A^m_i)$ is injective. But if 

$$ C^{\sigma,m}_{t_1,\ldots,t_r}(a^{i_1}_{q_1}) = C^{\sigma,m}_{t_1,\ldots,t_r}(a^{i_2}_{q_2}) $$

then by \eqref{form notation C} this means that

$$ a^{\sigma(i_1)}_{|q_1 + t_{\sigma(i_1)}|_m} = a^{\sigma(i_2)}_{|q_2 + t_{\sigma(i_2)}|_m} $$

and so $\sigma(i_1) = \sigma(i_2)$ which implies that $i_1=i_2$ (since $\sigma$ is a permutation). Hence let us
denote $i=i_1=i_2$. So we now have that

$$ |q_1 + t_{\sigma(i)}|_m =  |q_2 + t_{\sigma(i)}|_m $$

which means that $|q_1|_m = |q_2|_m$, but since $q_1,q_2\in \bigtriangledown_m$, it follows that $q_1=q_2$, and so
$a^{i_1}_{q_1} = a^{i_2}_{q_2}$.

Let also us remark on the special case $m=1$ in Notation~\ref{notation C}. 
If $\sigma \in \mathbf{S}(\{j,\ldots,j+r-1 \})\cong S_r$ is an $r$-cycle then

\begin{equation}\label{eq rem C}
(A^1_i)^{C^{\sigma,1}_{t_j,\ldots,t_{j+r-1}}} = A^1_{\sigma(i)}, \quad j \leq i \leq j + r -1. 
\end{equation}

But $A^1_i = (i)$ and $t_i \in \mathbb \bigtriangledown_1 = \{ 0 \}$ so we can rewrite \eqref{eq rem C} as

$$ (i)^{C^{\sigma,1}_{0,\ldots,0}} = \sigma(i), \quad j \leq i \leq j + r -1 $$

which means that 

$$ C^{\sigma,1}_{0,\ldots,0} = \sigma. $$

\hfill $\bullet$

\end{rem}

\begin{exa}\label{example C}
Let us give an example for an application of Lemma~\ref{lemma C}. Choosing $m=6, r=4$ 
and $\sigma = (1,3,2,4) \in S_4$ we have

$$
\aligned
&A_1^6 = (1,2,3,4,5,6)\\
&A_2^6 = (7,8,9,10,11,12)\\
&A_3^6 = (13,14,15,16,17,18)\\
&A_4^6 = (19,20,21,22,23,24)\\
\endaligned
$$

now choose $t_1 = 2, t_2 = 1, t_3 = 0, t_4 = 5$. Since $t_1 + t_2 + t_3 + t_4\equiv 2 (\text{mod} \ 6)$, and
since the minimal $l$ (between $1$ and $m=6$) such that $2l\equiv 0 (\text{mod} \ 6)$ is $l=3$, we expect
the cycle decomposition of $C^{\sigma,6}_{2,1,0,5}$ to consist of exactly $(\frac{m}{l} = \frac63 =) 2$
$(rl=)12$-cycles. Indeed

$$ C^{(1,3,2,4),6}_{2,1,0,5} = 
\aligned
&(1,13,8,19,3,15,10,21,5,17,12,23)\cdot\\
&(2,14,9,20,4,16,11,22,6,18,7,24)\\
\endaligned $$

As a simpler example, take $m=3, r=2$ and $\sigma=(1,2)\in S_2$. Here we have

$$
\aligned
&A_1^3 = (1,2,3)\\
&A_2^3 = (4,5,6)\\
\endaligned
$$

note that since $m=3$ is prime then for any choice of $t_i$, the sum of the $t_i$ would be either $0 (\text{mod} \ 3)$ or coprime to $m=3$. This means
that there are only two possible cycle decompositions of $C^{(1,2),3}_{t_1,t_2}$ : It either consists of 
three transpositions (in which case $l=1, \frac{m}{l} = 3$ and $rl=2$) or it is a single $6$-cycle
(in which case $l=3, \frac{m}{l} = 1$ and $rl = 6$).
Let us exhibit the two possibilities. If we choose for example $t_1 = 1, t_2 = 2$ then in this case $t_1 + t_2 \equiv 0 (\text{mod} \ 3)$
and we get

$$ C^{(1,2),3}_{1,2} = (1,6)(2,4)(3,5) $$

choosing $t_1 = 1, t_2 = 0$ we get

$$ C^{(1,2),3}_{1,0} = (1,4,2,5,3,6) $$

As an example for the more general formula in Notation~\ref{notation C} let us take $m=2, r=2$ and the $k-1$
transpositions $(i,i+1)$ for $i=1\ldots k-1$ for some $k\in \mathbb N$. We have

$$ A_i^2 = (2i-1,2i) $$

As in the case of $m=3$, since $2$ is prime there are only two possible cycle decompositions of $C^{(i,i+1),2}_{t_1,t_2}$ : Either a single 
$4$-cycle or two transpositions. Let us give examples exhibiting both possibilities

$$ C^{(i,i+1),2}_{0,1} = (2i-1,2i+2,2i,2i+1) \quad i=1\ldots k-1 $$

$$ C^{(i,i+1),2}_{1,1} = (2i-1,2i+2)(2i,2i+1) \quad i=1\ldots k-1 $$

note that we can rewrite the definition of the model homomorphisms from Definition~\ref{Def B_k S_2k} in the following way

$$ \varphi_1(\sigma_i) = C^{(i,i+1),2}_{0,1} $$

$$ \varphi_2(\sigma_i) = A_1\cdots A_{i-1}\cdot C^{(i,i+1),2}_{0,0} \cdot A_{i+2} \cdots A_k  $$

$$ \varphi_3(\sigma_i) = A_1\cdots A_{i-1}\cdot C^{(i,i+1),2}_{0,1} \cdot A_{i+2} \cdots A_k  $$

\hfill $ \bullet $
\end{exa}

We are now ready to present our main results. We first exhibit a family of homomorphisms $B_k \to S_{mk}$ 
for every $2\leq m \in \mathbb N$ which is a natural generalization of Definition~\ref{Def B_k S_2k}.

Let $m \in \mathbb N$, for each $6 < k \in \mathbb N$ we define

\begin{definition}\label{Def B_k S_mk}

The following homomorphisms
$\psi,\phi \colon B_k\to{\mathbf S_{mk}}$
are called the \textbf{model} ones:

$$
\psi_m(\sigma_i) =  C^{(i,i+1),m}_{1,0}
$$

in other words, $\psi_m(\sigma_i)$ is a $2m$-cycle whose support is 
$supp(A^m_i) \cup supp(A^m_{i+1}) = \{ 1 + m(i-1), 2 + m(i-1),\ldots, m(i+1) \} $ 
and for each $0 \leq j \leq m-1$

$$ 
\aligned
&\psi_m(\sigma_i)(a^i_j) = a^{i+1}_j \\
&\psi_m(\sigma_i)(a^{i+1}_j) = a^i_{j+1} \\
\endaligned
$$

where 

$$ a^i_j = 1 + m(i-1) + j \in supp(A^m_i) \quad j\in \bigtriangledown_m. $$

Still put differently, we can write

$$ \psi_m(\sigma_i) = (a^i_0, a^{i+1}_0, a^i_1, a^{i+1}_1,\ldots,a^i_{m-1},a^{i+1}_{m-1}) $$

(note that $\psi_1 = \mu$ the canonical epimorphism, see Definition~\ref{def canonical}).

We now use $\psi$ to define $\phi$. For each divisor $l$ of $m$ where $1\leq l < m$ 
and for each set of elements $ \{ t^i_{(i-1)l + 1},\ldots,t^i_{(i+1)l} \}_{i=1,\ldots,k-1} $ in $\bigtriangledown_{\frac{m}{l}}$ 

which satisfy the following condition

\begin{equation}\label{eq condition t}
\forall j\in \bigtriangledown_l \quad t^i_{(i-1)l + 1 + |1+j|_l} + t^i_{il + 1 + |1+ j|_l} \equiv
t^{i+1}_{(i+1)l + 1 + j} + t^{i+1}_{il + 1 + |1+ j|_l} (\text{mod} \ \frac{m}{l})
\end{equation}

define

\begin{equation}\label{eq model phi}
\phi_{m,l,\{ t^i_{(i-1)l + 1},\ldots,t^i_{(i+1)l } \}_{i=1,\ldots,k-1} }(\sigma_i) = A^{\frac{m}{l}}_1 \cdots A^{\frac{m}{l}}_{(i-1)l} 
\cdot C^{\psi_l(\sigma_i),\frac{m}{l}}_{t^i_{l(i-1)+1},\ldots,t^i_{l(i+1)} } \cdot A^{\frac{m}{l}}_{(i+1)l+1} \cdots A^{\frac{m}{l}}_{kl} 
\end{equation}

Equation \eqref{eq condition t} is a necessary and sufficient condition 
for \eqref{eq model phi} to be a homomorphism as we prove in Lemma~\ref{lemma condition C_i}.

Note that in \eqref{eq model phi}, since $\psi_l(\sigma_i)$ is a $2l$-cycle in $\mathbf{S}(l(i-1)+1,\ldots,l(i+1))$, 
each permutation $C^{\psi_l(\sigma_i),\frac{m}{l}}_{t^i_{l(i-1)+1},\ldots,t^i_{l(i+1)} }$ is considered as a permutation in 
$\mathbf{S}(\bigcup_{j=l(i-1)+1,\ldots,l(i+1)} supp(A^{\frac{m}{l}}_j))$ (see also remark~\ref{rem convention dis perms}).

In each of the above formulas $i=1,...,k-1$.
A homomorphism \ $\omega\colon B_k\to{\mathbf S_{mk}}$ \
is said to be {\bf standard} if it is conjugate to one of
the model homomorphisms $ \psi,\phi \colon B_k\to{\mathbf S_{mk}}$. 

\hfill $\bullet$
\end{definition}

\begin{rem}

Note that by Remark~\ref{rem C m=1}, in the case $m=l$ in Definition~\ref{Def B_k S_mk}, 
we have that all $t_i$ are in $\bigtriangledown_1 = \{0\}$ and

$$ \phi_{m,m,\{ t^i_{(i-1)l + 1},\ldots,t^i_{(i+1)l } \}_{i=1,\ldots,k-1} }(\sigma_i) =  
C^{\psi_m(\sigma_i),1}_{t^i_{l(i-1)+1},\ldots,t^i_{l(i+1)} } = C^{\psi_m(\sigma_i),1}_{0,\ldots,0}  
= \psi_m(\sigma_i). $$

Hence $\psi$ can be considered as a special case of $\phi$.

We can also see from Example~\ref{example C} that $\varphi_2 = \phi_{2,1,\{ t^i_i = 0, t^i_{i+1} = 0 \}}$ and 
$\varphi_3 = \phi_{2,1,\{ t^i_i = 0, t^i_{i+1} = 1 \}}$. Furthermore, as a consequence of our results in Section~\ref{section B_k S_mk supp=2m}
we have that $\varphi_1$ is conjugate to $\psi_2$ and so we conclude that the model homomorphisms which we define in Definition~\ref{Def B_k S_mk}
are indeed a generalization of the homomorphisms defined in Definition~\ref{Def B_k S_2k}. \hfill $\bullet$

\end{rem}

We dedicate the rest of this subsection to prove that the maps defined in Definition~\ref{Def B_k S_mk} are
indeed transitive and non-cyclic homomorphisms. We begin with the following

\begin{lemma}\label{lemma psi}
Let $m\in \mathbb N$. The map $\psi_m\colon B_k \to S_{mk}$ defined as follows (see also Definition~\ref{Def B_k S_mk})

$$ \psi_m(\sigma_i) = (a^i_0, a^{i+1}_0, a^i_1, a^{i+1}_1,\ldots,a^i_{m-1},a^{i+1}_{m-1}), \quad 1\leq i \leq k-1$$

where $ a^i_j = 1 + m(i-1) + j \in supp(A^m_i) $ and $j\in \bigtriangledown_m$, is a well defined non-cyclic transitive homomorphism.
\end{lemma}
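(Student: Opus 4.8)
The plan is to verify, in turn, that $\psi_m$ respects the two families of braid relations (so that it is a well-defined homomorphism), that it is non-cyclic, and that it is transitive.

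\textbf{Well-definedness.} Using the canonical presentation of $B_k$, I would check the relations on the images of the generators. The commutation relations are immediate from supports: $supp(\psi_m(\sigma_i)) = supp(A^m_i)\cup supp(A^m_{i+1}) = \{1+m(i-1),\dots,m(i+1)\}$, so for $|i-j|\ge 2$ these are disjoint intervals and $\psi_m(\sigma_i)$, $\psi_m(\sigma_j)$ commute. For the braid-like relation it suffices, by relabelling, to treat $i=1$, setting $a=\psi_m(\sigma_1)$ and $b=\psi_m(\sigma_2)$, both supported on the $3m$-point set $supp(A^m_1)\cup supp(A^m_2)\cup supp(A^m_3)$. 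Writing $x_j=a^1_j$, $y_j=a^2_j$, $z_j=a^3_j$, the definition of $\psi_m$ reads $a\colon x_j\mapsto y_j,\ y_j\mapsto x_{|j+1|_m}$ (with $a$ fixing every $z_j$) and $b\colon y_j\mapsto z_j,\ z_j\mapsto y_{|j+1|_m}$ (with $b$ fixing every $x_j$). One then evaluates $aba$ and $bab$ on a point of each of the three types and checks agreement; each case is a one-line chase, e.g. $aba(x_j)=a(b(y_j))=a(z_j)=z_j=b(a(x_j))=bab(x_j)$; $aba(y_j)=a(b(x_{|j+1|_m}))=y_{|j+1|_m}=b(a(z_j))=bab(y_j)$; and $aba(z_j)=a(b(z_j))=x_{|j+2|_m}=b(x_{|j+2|_m})=bab(z_j)$. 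This elementary three-case verification is the only genuine computation and is the main obstacle.

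\textbf{Non-cyclicity.} I would invoke Lemma~\ref{lemma cyclic homomorphism}: if $\psi_m$ were cyclic then in particular $\psi_m(\sigma_1)=\psi_m(\sigma_2)$. But these two permutations have distinct supports, namely $\{1,\dots,2m\}$ and $\{m+1,\dots,3m\}$ (both $\sigma_1,\sigma_2$ exist since $k>6$), so $\psi_m(\sigma_1)\neq\psi_m(\sigma_2)$ and $\psi_m$ is not cyclic.

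\textbf{Transitivity.} I would show by induction on $i$ that the orbit of the point $1=a^1_0$ under the subgroup $\langle\psi_m(\sigma_1),\dots,\psi_m(\sigma_i)\rangle$ contains $\{1,\dots,m(i+1)\}$. For $i=1$ this holds because $\psi_m(\sigma_1)$ is a single $2m$-cycle with support $\{1,\dots,2m\}$. For the inductive step, the orbit already meets $supp(A^m_{i+1})\subseteq\{1,\dots,m(i+1)\}$, and $\psi_m(\sigma_{i+1})$ is a single $2m$-cycle on $supp(A^m_{i+1})\cup supp(A^m_{i+2})$; hence powers of $\psi_m(\sigma_{i+1})$ enlarge the orbit to contain $supp(A^m_{i+2})$, so the orbit contains $\{1,\dots,m(i+1)\}\cup supp(A^m_{i+2})=\{1,\dots,m(i+2)\}$. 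Taking $i=k-1$ shows the orbit is all of $\Delta_{mk}$, so $\psi_m(B_k)$ is transitive, completing the proof.
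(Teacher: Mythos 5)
Your proposal is correct and follows essentially the same route as the paper: a direct pointwise check of the braid relation on the three relevant blocks $supp(A^m_i)\cup supp(A^m_{i+1})\cup supp(A^m_{i+2})$ (the paper does this for general $i$, you reduce to $i=1$ by the evident translation symmetry, which is fine), disjointness of supports for the commuting relations, non-cyclicity from $\psi_m(\sigma_1)\neq\psi_m(\sigma_2)$, and transitivity by the same orbit-growing induction along the overlaps $supp(A^m_{i+1})$. No gaps.
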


\begin{proof}
For each $j\in \bigtriangledown_m$ and $1\leq i \leq k-1$ we have 

$$ 
\aligned
&\psi_m(\sigma_i)\psi_m(\sigma_{i+1})\psi_m(\sigma_i)(a^i_j) =\\
&\psi_m(\sigma_i)\psi_m(\sigma_{i+1})(a^{i+1}_j) =\\
&\psi_m(\sigma_i)(a^{i+2}_j) = a^{i+2}_j \\
\endaligned
$$

$$ 
\aligned
&\psi_m(\sigma_{i+1})\psi_m(\sigma_i)\psi_m(\sigma_{i+1})(a^i_j) =\\
&\psi_m(\sigma_{i+1})\psi_m(\sigma_i)(a^i_j) =\\
&\psi_m(\sigma_{i+1})(a^{i+1}_j) = a^{i+2}_j \\
\endaligned
$$

$$ 
\aligned
&\psi_m(\sigma_i)\psi_m(\sigma_{i+1})\psi_m(\sigma_i)(a^{i+1}_j) =\\
&\psi_m(\sigma_i)\psi_m(\sigma_{i+1})(a^i_{j+1}) =\\
&\psi_m(\sigma_i)(a^i_{j+1}) = a^{i+1}_{j+1} \\
\endaligned
$$

$$ 
\aligned
&\psi_m(\sigma_{i+1})\psi_m(\sigma_i)\psi_m(\sigma_{i+1})(a^{i+1}_j) =\\
&\psi_m(\sigma_{i+1})\psi_m(\sigma_i)(a^{i+2}_j) =\\
&\psi_m(\sigma_{i+1})(a^{i+2}_j) = a^{i+1}_{j+1} \\
\endaligned
$$

$$ 
\aligned
&\psi_m(\sigma_i)\psi_m(\sigma_{i+1})\psi_m(\sigma_i)(a^{i+2}_j) =\\
&\psi_m(\sigma_i)\psi_m(\sigma_{i+1})(a^{i+2}_j) =\\
&\psi_m(\sigma_i)(a^{i+1}_{j+1}) = a^i_{j+2} \\
\endaligned
$$

$$ 
\aligned
&\psi_m(\sigma_{i+1})\psi_m(\sigma_i)\psi_m(\sigma_{i+1})(a^{i+2}_j) =\\
&\psi_m(\sigma_{i+1})\psi_m(\sigma_i)(a^{i+1}_{j+1}) =\\
&\psi_m(\sigma_{i+1})(a^i_{j+2}) = a^i_{j+2} \\
\endaligned
$$

Hence, since $a^q_j\not \in supp(\psi_m(\sigma_i)) \cup supp(\psi_m(\sigma_{i+1}))$ for $q\neq i,i+1,i+2$ and $j\in \bigtriangledown_m$ , 
we have that for each $1\leq i \leq k-1$ 

$$ \psi_m(\sigma_i)\psi_m(\sigma_{i+1})\psi_m(\sigma_i) = \psi_m(\sigma_{i+1})\psi_m(\sigma_i)\psi_m(\sigma_{i+1}). $$

Furthermore, we have that $\psi_m(\sigma_i)\psi_m(\sigma_j) = \psi_m(\sigma_j)\psi_m(\sigma_i)$ for $|i-j| > 1$ since
for these indices $supp(\psi_m(\sigma_i)) \cap supp(\psi_m(\sigma_j)) = (supp(A_i)\cup supp(A_{i+1})) \cap (supp(A_j) \cup supp(A_{j+1})) = \emptyset$.

Hence we conclude that $\psi_m\colon B_k \to S_{mk}$ is indeed a homomorphism. It is clear that $\psi_m$ is non-cyclic
since $\psi_m(\sigma_i) \neq \psi_m(\sigma_j)$ for $i\neq j$. To see that $\psi_m$ is transitive, consider
the orbit of $1\in \Delta_{mk}$ in $Im(\psi_m)$; it includes $supp(\psi_m(\sigma_1)) = supp(A_1) \cup supp(A_2)$ and since
$supp(\psi_m(\sigma_i)) \cap supp(\psi_m(\sigma_{i+1})) = supp(A_{i+1})$ then it is easy to see by induction on $i$
that the orbit of $1$ includes $\cup_{i=1,\ldots,k} supp(A_i) = \Delta_{mk}$.

\end{proof}

Before proceeding to show that the rest of the maps we defined in Definition~\ref{Def B_k S_mk} are
well defined non-cyclic and transitive homomorphisms, we prove a few preliminary results

\begin{lemma}\label{lemma prep phi}
Let $k,l\in \mathbb N$ and $A_1,\ldots,A_{kl},C_1,\ldots,C_{k-1}$ be permutations in $S_n$ for some 
$n \in \mathbb N$ which satisfy the following restrictions for each $i=1,\ldots,k-1$

\begin{eqnarray}
&A_r^{C_i} =
\begin{cases}
A_r & r = 1,\ldots,(i-1)l \quad  \text{or} \quad r=(i+1)l + 1,\ldots,kl\label{eq prep comm}\\
A_{\psi_l(\sigma_i)(r)} & r = (i-1)l + 1,\ldots, (i+1)l\label{prep eq AC}\\
\end{cases}\\
&[A_r,A_s] = 1 \quad r,s=1,\ldots,kl\label{prep eq A}
\end{eqnarray}

where $\psi_l$ is the model homomorphism from Definition~\ref{Def B_k S_mk}, then we have that $C_i \infty C_{i+1}$
(i.e. $C_i$ and $C_{i+1}$ are a braid-like pair) if and only if $D_i \infty D_{i+1}$ where

$$ D_i = A_1\cdots A_{(i-1)l} \cdot C_i \cdot A_{(i+1)l + 1} \cdots A_{kl} $$

for each $i=1,\ldots,k-1$

\end{lemma}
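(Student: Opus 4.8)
The plan is to split off the part of each $D_i$ that is invisible to a braid word, reduce the desired identity $D_iD_{i+1}D_i=D_{i+1}D_iD_{i+1}$ to an identity among $C_i$, $C_{i+1}$ and a few products of the $A_r$, and then collapse that identity by a bare-hands rewrite. First, group the $A_r$ into consecutive blocks of length $l$ around position $i$: set $P=A_1\cdots A_{(i-1)l}$, $X=A_{(i-1)l+1}\cdots A_{il}$, $Y=A_{il+1}\cdots A_{(i+1)l}$, $Z=A_{(i+1)l+1}\cdots A_{(i+2)l}$ and $Q=A_{(i+2)l+1}\cdots A_{kl}$ (empty products read as the identity; note the statement is only meaningful for $1\le i\le k-2$). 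Then $D_i=PC_iZQ$ and $D_{i+1}=PXC_{i+1}Q$. By \eqref{prep eq A} the five block-products commute pairwise and $\langle A_1,\dots,A_{kl}\rangle$ is abelian, and by \eqref{eq prep comm} the permutation $C_i$ commutes with $P$, $Z$, $Q$ while $C_{i+1}$ commutes with $P$, $X$, $Q$. Hence $D_i=C_iPZQ$, $D_{i+1}=C_{i+1}PXQ$, and $P,Q$ lie in the centre of $\langle A_1,\dots,A_{kl},C_i,C_{i+1}\rangle$.

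The one genuine input is how $C_i$ and $C_{i+1}$ conjugate $X$, $Y$, $Z$. By \eqref{prep eq AC}, conjugation by $C_i$ permutes $\{A_r:r\in\{(i-1)l+1,\dots,(i+1)l\}\}$ according to the $2l$-cycle $\psi_l(\sigma_i)$; and from the explicit form of $\psi_l(\sigma_i)$ in Definition~\ref{Def B_k S_mk} (namely $\psi_l(\sigma_i)(a^i_j)=a^{i+1}_j$ and $\psi_l(\sigma_i)(a^{i+1}_j)=a^i_{j+1}$) this cycle carries $supp(A^l_i)=\{(i-1)l+1,\dots,il\}$ bijectively onto $supp(A^l_{i+1})=\{il+1,\dots,(i+1)l\}$ and back. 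Since the $A_r$ commute, conjugation by $C_i$ therefore simply relabels the factors of $X$ as the factors of $Y$ and conversely, i.e. $X^{C_i}=Y$ and $Y^{C_i}=X$; likewise $Y^{C_{i+1}}=Z$ and $Z^{C_{i+1}}=Y$. Equivalently $C_iX=YC_i$, $C_iY=XC_i$, $C_{i+1}Y=ZC_{i+1}$ and $C_{i+1}Z=YC_{i+1}$.

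Using the centrality of $P,Q$ one obtains $D_iD_{i+1}D_i=P^3Q^3\cdot C_iZC_{i+1}XC_iZ$ and $D_{i+1}D_iD_{i+1}=P^3Q^3\cdot C_{i+1}XC_iZC_{i+1}X$, so after cancelling the permutation $P^3Q^3$ the identity $D_i\infty D_{i+1}$ becomes $C_iZC_{i+1}XC_iZ=C_{i+1}XC_iZC_{i+1}X$. Into this equation I would feed the commutations $[C_i,Z]=1$, $[C_{i+1},X]=1$ together with the four swap relations of the previous paragraph, moving the factors $X,Y,Z$ to the far left one at a time: the left-hand side telescopes to $XYZ\cdot C_iC_{i+1}C_i$ and, by the mirror computation, the right-hand side to $XYZ\cdot C_{i+1}C_iC_{i+1}$. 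Cancelling $XYZ$ then turns the equivalence into $D_i\infty D_{i+1}\iff C_iC_{i+1}C_i=C_{i+1}C_iC_{i+1}\iff C_i\infty C_{i+1}$, which is the assertion.

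I expect the only delicate point to be the block-swap statement of the second paragraph: it is exactly what makes the $X,Y,Z$ bookkeeping of the third paragraph close up, and it relies on the interleaving pattern of the $2l$-cycle $\psi_l(\sigma_i)$, not merely on the fact that $C_i$ normalises the relevant window. Everything after that is a mechanical string rewrite using only commutations and the four swap identities, which I would carry out explicitly in the proof.
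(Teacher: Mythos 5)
Your proof is correct and follows essentially the same route as the paper's: both reduce the braid relation for the $D_i$ to that for the $C_i$ by cancelling the $A$-blocks, using commutativity of the $A_r$, the commutation of $C_i$ with the $A_r$ outside its window, and the block-swap relations $X^{C_i}=Y$, $Y^{C_i}=X$, $Y^{C_{i+1}}=Z$, $Z^{C_{i+1}}=Y$ forced by the interleaving $2l$-cycle $\psi_l(\sigma_i)$. The only difference is presentational: you rewrite $D_iD_{i+1}D_i$ and $D_{i+1}D_iD_{i+1}$ into a normal form with the $A$-blocks pulled to the left, whereas the paper carries out the same cancellation on the equivalent conjugation identity $D_i^{D_{i+1}}=D_{i+1}^{D_i^{-1}}$.
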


\begin{proof}

First note that since

$$ \psi_l(\sigma_i)( \{ (i-1)l + 1,\ldots,(i+1)l \} ) = \{ (i-1)l + 1,\ldots,(i+1)l \} $$

we have that the conditions~\ref{prep eq AC} and \ref{prep eq A} imply that

$$ (A_{(i-1)l + 1} \cdots A_{(i+1)l})^{C_i} = A_{(i-1)l + 1} \cdots A_{(i+1)l} $$

and so we can deduce the following two equations

\begin{eqnarray}
&C_i^{ A_{(i-1)l + 1}\cdots A_{il} \cdot A_{il + 1} \cdots A_{(i+1)l} } = C_i\label{prep eq C_i}
\end{eqnarray}

\begin{eqnarray}
&C_{i+1}^{ A_{il + 1}\cdots A_{(i+1)l} \cdot A_{(i+1)l + 1} \cdots A_{(i+2)l} } = C_{i+1}\label{prep eq C_i+1}
\end{eqnarray}

Now, by definition, $D_i \infty D_{i+1}$ means that

$$ D_i D_{i+1} D_i = D_{i+1} D_i D_{i+1} $$

let us write the last equation as follows

$$ D_i^{D_{i+1}} = D_{i+1}^{D_i^{-1}} $$

and explicitly

\begin{equation}\label{eq D_i}
(A_1\cdots A_{(i-1)l} \cdot C_i \cdot A_{(i+1)l + 1} \cdots A_{kl})^{ A_1\cdots A_{il} \cdot C_{i+1} \cdot A_{(i+2)l + 1} \cdots A_{kl} } =
\end{equation}
$$(A_1\cdots A_{il} \cdot C_{i+1} \cdot A_{(i+2)l + 1} \cdots A_{kl})^{ A^{-1}_{kl}\cdots A^{-1}_{(i+1)l+1} \cdot C^{-1}_i 
\cdot A^{-1}_{(i-1)l} \cdots A^{-1}_1 }.$$

Now since $ D_i = A_1\cdots A_{(i-1)l} \cdot C_i \cdot A_{(i+1)l + 1} \cdots A_{kl} $ and the cycles $A_j$ in this
product are disjoint and $C_i$ commutes with all of these $A_j$ (according to \eqref{eq prep comm}) we conclude that all
the cycles in this product commute and we can write

$$ D_i^{-1} = A^{-1}_{kl}\cdots A^{-1}_{(i+1)l+1} \cdot C^{-1}_i 
\cdot A^{-1}_{(i-1)l} \cdots A^{-1}_1 =  $$

$$ = A^{-1}_1\cdots A^{-1}_{(i-1)l} \cdot C^{-1}_i 
\cdot A^{-1}_{(i+1)l + 1} \cdots A^{-1}_{kl}. $$

Hence we can rewrite \eqref{eq D_i} as

$$(A_1\cdots A_{(i-1)l} \cdot C_i \cdot A_{(i+1)l + 1} \cdots A_{kl})^{ A_1\cdots A_{il} \cdot C_{i+1} \cdot A_{(i+2)l + 1} \cdots A_{kl} } =$$
$$(A_1\cdots A_{il} \cdot C_{i+1} \cdot A_{(i+2)l + 1} \cdots A_{kl})^{ A^{-1}_1\cdots A^{-1}_{(i-1)l} \cdot C^{-1}_i 
\cdot A^{-1}_{(i+1)l + 1} \cdots A^{-1}_{kl} }.$$

Now using the given equations~\ref{prep eq AC} and \ref{prep eq A} we can rewrite the last equation as follows
(we denote $\psi_l(\sigma_i)$ as $\widehat \sigma_i$ for short)

$$ A_1\cdots A_{(i-1)l} C_i^{A_{(i-1)l + 1}\cdots A_{il} \cdot C_{i+1} \cdot A_{(i+2)l + 1} \cdots A_{kl}} \cdot $$
$$ \cdot A_{\widehat \sigma_{i+1}((i+1)l + 1)}\cdots A_{\widehat \sigma_{i+1}((i+2)l)} A_{(i+2)l+1}\cdots A_{kl} = $$

$$ A_1\cdots A_{(i-1)l} A_{\widehat \sigma^{-1}_i((i-1)l + 1)}\cdots A_{\widehat \sigma^{-1}_i(il)} C_{i+1}^{C^{-1}_i \cdot A^{-1}_{(i+1)l + 1} \cdots A^{-1}_{kl}} A_{(i+2)l+1}\cdots A_{kl} $$

which can be written after cancellations as

\begin{equation}\label{eq cancel}
C_i^{A_{(i-1)l + 1}\cdots A_{il} \cdot C_{i+1} \cdot A_{(i+2)l + 1} \cdots A_{kl}} A_{\widehat \sigma_{i+1}((i+1)l + 1)}\cdots A_{\widehat \sigma_{i+1}((i+2)l)}  = 
\end{equation}
$$  A_{\widehat \sigma^{-1}_i((i-1)l + 1)}\cdots A_{\widehat \sigma^{-1}_i(il)} C_{i+1}^{C^{-1}_i \cdot A^{-1}_{(i+1)l + 1} \cdots A^{-1}_{kl}}  $$

now since

$$ \widehat \sigma^{-1}_i( \{ (i-1)l + 1,\ldots,il \} ) = \{ il + 1,\ldots, (i+1)l \} = $$ 
$$ = \widehat \sigma_{i+1}(\{ (i+1)l + 1,\ldots,(i+2)l  \})$$

and by the condition~\ref{prep eq A}, we have that

$$ A_{\widehat \sigma_{i+1}((i+1)l + 1)}\cdots A_{\widehat \sigma_{i+1}((i+2)l)} = A_{il+1}\cdots A_{(i+1)l}$$

and 

$$ A_{\widehat \sigma^{-1}_i((i-1)l + 1)}\cdots A_{\widehat \sigma^{-1}_i(il)} = A_{il+1}\cdots A_{(i+1)l} $$

and so we can rewrite \eqref{eq cancel} as

\begin{equation}\label{eq cancel'}
C_i^{A_{(i-1)l + 1}\cdots A_{il} \cdot C_{i+1} \cdot A_{(i+2)l + 1} \cdots A_{kl}}  A_{il+1}\cdots A_{(i+1)l} =
\end{equation}
$$  A_{il+1}\cdots A_{(i+1)l} C_{i+1}^{C^{-1}_i \cdot A^{-1}_{(i+1)l + 1} \cdots A^{-1}_{kl}}  $$

or equivalently as

$$  C_i^{A_{(i-1)l + 1}\cdots A_{il} \cdot C_{i+1} \cdot A_{(i+2)l + 1} \cdots A_{kl} A_{il + 1}\cdots A_{(i+1)l}}  = $$
$$ =  C_{i+1}^{C^{-1}_i \cdot A^{-1}_{(i+1)l + 1} \cdots A^{-1}_{(i+2)l}} $$

which is equivalent to

$$  C_i^{A_{(i-1)l + 1}\cdots A_{il} \cdot C_{i+1} \cdot A_{(i+2)l + 1} \cdots A_{kl} A_{il + 1}\cdots A_{(i+1)l} 
\cdot A_{(i+2)l} \cdots A_{(i+1)l + 1} \cdot C_i  }  = C_{i+1}$$

which can be rewritten, using \ref{eq prep comm} and \ref{prep eq A}, as

$$  C_i^{A_{(i-1)l + 1}\cdots A_{il} \cdot C_{i+1} \cdot A_{il + 1}\cdots A_{(i+1)l} A_{(i+2)l + 1} \cdots A_{kl}   
\cdot C_i A_{(i+1)l + 1} \cdots A_{(i+2)l}}  = C_{i+1}$$

now according to \eqref{prep eq C_i}, we can rewrite the last equation as

$$  C_i^{ (A_{il + 1}\cdots A_{(i+1)l})^{-1} \cdot C_{i+1} \cdot (A_{il + 1}\cdots A_{(i+1)l}) A_{(i+2)l + 1} \cdots A_{kl}   
\cdot C_i A_{(i+1)l + 1} \cdots A_{(i+2)l}}  = C_{i+1} $$

and according to \eqref{prep eq C_i+1} we can rewrite the last equation as

$$  C_i^{ (A_{(i+1)l + 1}\cdots A_{(i+2)l}) \cdot C_{i+1} \cdot (A_{(i+1)l + 1}\cdots A_{(i+2)l})^{-1} A_{(i+2)l + 1} \cdots A_{kl}   
\cdot C_i A_{(i+1)l + 1} \cdots A_{(i+2)l}}  = C_{i+1} $$

and using \ref{eq prep comm} and \ref{prep eq A} again we get that this is equivalent to

$$ C_i^{C_{i+1}C_i} = C_{i+1} $$

\end{proof}

\begin{lemma}\label{lemma condition C_i}
Let $l\in \mathbb N$, $\psi_l\colon B_k\to S_{kl}$ be the model homomorphism defined for each $i=1,\ldots,k-1$ 
as follows (see Definition~\ref{Def B_k S_mk} and Notations~\ref{notation A} and ~\ref{notation C})

$$ \psi_l(\sigma_i) = (a^i_0, a^{i+1}_0, a^i_1, a^{i+1}_1,\ldots,a^i_{l-1},a^{i+1}_{l-1}) $$

where

$$ a^i_j = 1 + l(i-1) + j \in supp(A^l_i) \quad j\in \bigtriangledown_l $$

Let us denote

\begin{equation}\label{notation delta^i}
\Delta^i = \bigcup_{j=0,\ldots,l-1} supp(A_{(i-1)l + 1 + j}).
\end{equation}

Then for each $i = 1,\ldots,k-2$, $\ n\in \mathbb N$ and $t^i_{l(i-1) + 1},\ldots,t^i_{l(i+1)},t^{i+1}_{li+1},\ldots,t^{i+1}_{l(i+2)} \in \bigtriangledown_n$ we have that 

$$ C^{\psi_l(\sigma_i),n}_{t^i_{l(i-1) + 1},\ldots,t^i_{l(i+1)} } \infty C^{\psi_l(\sigma_{i+1}),n}_{t^{i+1}_{li + 1},\ldots,t^{i+1}_{l(i+2)} } $$

if and only if 

$$ (C^{\psi_l(\sigma_i),n}_{t^i_{l(i-1) + 1},\ldots,t^i_{l(i+1)} })^2\big|_{\Delta^{i+1}} = 
(C^{\psi_l(\sigma_{i+1}),n}_{t^{i+1}_{li + 1},\ldots,t^{i+1}_{l(i+2)} })^2\big|_{\Delta^{i+1}} $$

if and only if

$$\forall j\in \bigtriangledown_l \quad t^i_{(i-1)l + 1 + |1+j|_l} + t^i_{il + 1 + |1+ j|_l} \equiv
t^{i+1}_{(i+1)l + 1 + j} + t^{i+1}_{il + 1 + |1+ j|_l} (\text{mod} \ n)  $$

(The $n$ in this lemma is $\frac{m}{l}$ appearing in \eqref{eq condition t} in Definition~\ref{Def B_k S_mk})

\end{lemma}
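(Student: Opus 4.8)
The plan is to introduce a ``twisted block'' bookkeeping and reduce the three equivalences to one cocycle computation. Write $\Delta_{kln}$ as the disjoint union $\bigcup_{p=1}^{kl} supp(A^n_p)$ and, for a block index $p\in\{1,\dots,kl\}$ and a fibre coordinate $q\in\bigtriangledown_n$, denote by $e_{p,q}=1+n(p-1)+q$ the corresponding point of $supp(A^n_p)$. Call a permutation $R$ of $\Delta_{kln}$ \emph{admissible} if there are a permutation $\pi_R$ of $\{1,\dots,kl\}$ and a map $\delta_R\colon\{1,\dots,kl\}\to\bigtriangledown_n$ with $R(e_{p,q})=e_{\pi_R(p),\,|q+\delta_R(p)|_n}$ for all $p,q$. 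By \eqref{form notation C}, the permutations $P:=C^{\psi_l(\sigma_i),n}_{t^i_{l(i-1)+1},\dots,t^i_{l(i+1)}}$ and $Q:=C^{\psi_l(\sigma_{i+1}),n}_{t^{i+1}_{li+1},\dots,t^{i+1}_{l(i+2)}}$ — each viewed as an element of $\mathbf{S}(\bigcup_{p=1}^{kl} supp(A^n_p))$ by extending by the identity — are admissible, with $\pi_P=\psi_l(\sigma_i)$, $\pi_Q=\psi_l(\sigma_{i+1})$, and $\delta_P(p)=t^i_{\pi_P(p)}$, $\delta_Q(p)=t^{i+1}_{\pi_Q(p)}$ on the blocks actually moved (and $0$ elsewhere). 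An easy check shows that admissible permutations are closed under composition, with $\pi_{RS}=\pi_R\pi_S$ and the cocycle rule $\delta_{RS}(p)=\delta_S(p)+\delta_R(\pi_S(p))$, and that two admissible permutations coincide iff they induce the same $\pi$ and the same $\delta$.

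Since $\psi_l$ is a homomorphism (Lemma~\ref{lemma psi}), $\pi_P\pi_Q\pi_P=\pi_Q\pi_P\pi_Q$, so $PQP$ and $QPQ$ induce the same block permutation; hence $P\infty Q$ holds iff $\delta_{PQP}=\delta_{QPQ}$. Applying the cocycle rule twice, this is the system of congruences
\[
\delta_P(p)+\delta_Q(\pi_P(p))+\delta_P(\pi_Q\pi_P(p))\;\equiv\;\delta_Q(p)+\delta_P(\pi_Q(p))+\delta_Q(\pi_P\pi_Q(p))\pmod n,
\]
one for each block $p$; call it $(\ast_p)$. Only blocks in $\{l(i-1)+1,\dots,l(i+2)\}$ can matter (elsewhere both sides vanish), and they split into the packets $\Delta^i,\Delta^{i+1},\Delta^{i+2}$. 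Using the explicit action of $\psi_l(\sigma_i),\psi_l(\sigma_{i+1})$ from Definition~\ref{Def B_k S_mk} — namely $a^r_j\mapsto a^{r+1}_j$ and $a^{r+1}_j\mapsto a^r_{|j+1|_l}$ — a direct case analysis shows that $(\ast_p)$ holds automatically for $p\in\Delta^i$ and for $p\in\Delta^{i+2}$ (in each case both sides collapse to the same expression of the form $t^i_{\,\cdot}+t^{i+1}_{\,\cdot}$ with matching indices). Hence $P\infty Q$ is equivalent to $(\ast_p)$ for $p\in\Delta^{i+1}$ only.

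It remains to identify $(\ast_p)$ for $p=a^{i+1}_j$, $j\in\bigtriangledown_l$. Since $\psi_l(\sigma_i)^2$ restricts on blocks to $A^l_i\cdot A^l_{i+1}$, the square $P^2$ preserves $\Delta^{i+1}$ and acts there with block permutation $A^l_{i+1}$, and likewise for $Q^2$; so $(P^2)|_{\Delta^{i+1}}$ and $(Q^2)|_{\Delta^{i+1}}$ are well defined with the same block permutation. The cocycle rule gives $\delta_{P^2}(a^{i+1}_j)=t^i_{(i-1)l+1+|1+j|_l}+t^i_{il+1+|1+j|_l}$ and $\delta_{Q^2}(a^{i+1}_j)=t^{i+1}_{(i+1)l+1+j}+t^{i+1}_{il+1+|1+j|_l}$, and the case analysis of the previous step shows these are exactly the two sides of $(\ast_{a^{i+1}_j})$. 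Therefore the congruences $(\ast_p)$, $p\in\Delta^{i+1}$, say precisely that $(P^2)|_{\Delta^{i+1}}=(Q^2)|_{\Delta^{i+1}}$, and at the same time they are literally condition \eqref{eq condition t} with $\tfrac{m}{l}$ replaced by $n$; this closes the chain of equivalences. The only genuine labour — and the place to be most careful — is the index bookkeeping in the case analysis for $(\ast_p)$: one must keep the $kl$ block indices and the size-$n$ fibres on separate footings, handle the mod-$l$ wraparound $a^r_{l-1}\mapsto a^r_0$ at the packet boundaries, and remember that $\delta_P(p)$ is the parameter $t^i$ indexed by the \emph{target} block $\pi_P(p)$, not by $p$.
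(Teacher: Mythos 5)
Your proposal is correct, and it reaches the conclusion by a somewhat different route than the paper. The paper splits the argument in two: first, a purely support-theoretic analysis of the element $D_i=C_iC_{i+1}C_iC_{i+1}^{-1}C_i^{-1}C_{i+1}^{-1}$ (using only that $supp(C_i)=\Delta^i\cup\Delta^{i+1}$ and that $C_i$ interchanges $\Delta^i$ and $\Delta^{i+1}$) to show $D_i$ is trivial off $\Delta^{i+1}$ and equals $C_i^2C_{i+1}^{-2}$ there, giving the equivalence of the braid relation with the condition on squares; second, a pointwise computation with \eqref{form notation C} translating the squares condition into the congruences on the $t$'s. You instead set up a uniform $(\pi,\delta)$-bookkeeping for ``admissible'' permutations (essentially the wreath-product picture behind Notation~\ref{notation C}), verify the composition cocycle rule, use the fact that the block permutations of $PQP$ and $QPQ$ already agree because $\psi_l$ is a homomorphism (Lemma~\ref{lemma psi}), and reduce all three conditions to the per-block congruences $(\ast_p)$; your case analysis correctly shows $(\ast_p)$ is automatic on the packets $\Delta^i$ and $\Delta^{i+2}$, and on $\Delta^{i+1}$ it coincides both with $\delta_{P^2}=\delta_{Q^2}$ (hence with equality of the restricted squares, since their block permutations there both equal the cycle of $A^l_{i+1}$) and with condition \eqref{eq condition t}. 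What your approach buys is a single algebraic framework (determination of such permutations by the pair $(\pi,\delta)$ plus the cocycle rule) from which all the equivalences drop out by one computation; what the paper's first step buys is greater generality, since its restriction argument for ``braid-like $\Leftrightarrow$ equal squares on $\Delta^{i+1}$'' uses only the support structure of $C_i,C_{i+1}$ and not the explicit twisted-block form. The only blemishes in your write-up are notational (using $a^{i+1}_j$ and $\Delta^{i+1}$ simultaneously for block indices and for point sets), which the paper itself also does, and the cocycle verification and boundary case analysis are asserted rather than written out, but both are routine and correct as stated.
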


\begin{proof}

Let us denote $C^{\psi_l(\sigma_i),n}_{t^i_{l(i-1) + 1},\ldots,t^i_{l(i+1)} } \in S_{lnk}$ as $C_i$ for short.

Recall that we consider each element of $S_{lnk}$ as a bijection from the following set to itself 

$$\Delta_{lnk} = \{ 1,\ldots,lnk \} = \bigcup_{1,\ldots,lk} supp(A_i)$$

We denote

$$ \Delta^i = \bigcup_{j=0,\ldots,l-1} supp(A_{(i-1)l + 1 + j}) $$

Using the notation in \eqref{notation delta^i}, we can write

$$ supp(C_i) = \Delta^i \cup \Delta^{i+1} $$

and so

\begin{eqnarray}
&C_i|_{\Delta_{lnk} - (\Delta^i \cup \Delta^{i+1})} = id\label{eq fix i}
\end{eqnarray}

and 

\begin{eqnarray}
&C_{i+1}|_{\Delta_{lnk} - (\Delta^{i+1} \cup \Delta^{i+2})} = id\label{eq fix i+1}
\end{eqnarray}

we can also see that $C_i$ maps $\Delta^i$ bijectively to $\Delta^{i+1}$ and vice versa, i.e.

\begin{eqnarray}
C_i \colon \Delta^i \stackrel{\cong}{\longrightarrow } \Delta^{i+1} \stackrel{\cong}{\longrightarrow } \Delta^i\label{eq bij i}
\end{eqnarray}

and

\begin{eqnarray}
C_{i+1} \colon \Delta^{i+1} \stackrel{\cong}{\longrightarrow } \Delta^{i+2} \stackrel{\cong}{\longrightarrow } \Delta^{i+1}\label{eq bij i+1}
\end{eqnarray}

Consider the following permutation in $S_{lnk}$

$$ D_i = C_i C_{i+1} C_i C^{-1}_{i+1} C^{-1}_i C^{-1}_{i+1} $$

We shall show first that $D_i$ is trivial (as a permutation in $S_{lnk}$) iff $C^2_i = C^2_{i+1}$. According to equations~\ref{eq fix i}
and ~\ref{eq fix i+1} it is enough to show this for the restriction of $D_i$ to $\Delta^i \cup \Delta^{i+1} \cup \Delta^{i+2} $.

Consider first the following restriction

$$ D_i|_{\Delta^i} = C_i C_{i+1} C_i C^{-1}_{i+1} C^{-1}_i C^{-1}_{i+1}|_{\Delta^i} $$

because of equation~\ref{eq fix i+1} we have

$$ C_i C_{i+1} C_i C^{-1}_{i+1} C^{-1}_i C^{-1}_{i+1}|_{\Delta^i} = C_i C_{i+1} C_i C^{-1}_{i+1} C^{-1}_i|_{\Delta^i} $$

and by the diagrams~\ref{eq bij i} and \ref{eq bij i+1} we have that

$$  C^{-1}_{i+1} C^{-1}_i(\Delta^i) = \Delta^{i+2} $$

and by equation~\ref{eq fix i} we have that $C_i^{-1}|_{\Delta^{i+2}} = id$ and so

$$ C_i C_{i+1} C_i (C^{-1}_{i+1} C^{-1}_i|_{\Delta^i}) = C_i C_{i+1} C^{-1}_{i+1} C^{-1}_i|_{\Delta^i} = id $$

hence we conclude that $D_i|_{\Delta^i} = id$.

Now consider

$$ D_i|_{\Delta^{i+2}} = C_i C_{i+1} C_i C^{-1}_{i+1} C^{-1}_i C^{-1}_{i+1}|_{\Delta^{i+2}} $$

by the diagrams~\ref{eq bij i} and \ref{eq bij i+1} we have that

$$  C^{-1}_i C^{-1}_{i+1}(\Delta^{i+2}) = \Delta^i $$

by equation~\ref{eq fix i+1} we have that $C_{i+1}|_{\Delta^i} = id$ and so

$$ C_i C_{i+1} C_i C^{-1}_{i+1} (C^{-1}_i C^{-1}_{i+1}|_{\Delta^{i+2}}) = C_i C_{i+1} C_i C^{-1}_i C^{-1}_{i+1}|_{\Delta^{i+2}} = C_i|_{\Delta^{i+2}} = id $$

the last equation implied from equation~\ref{eq fix i}. Hence we conclude that $D_i|_{\Delta^{i+2}} = id$.

Now consider

$$ D_i|_{\Delta^{i+1}} = C_i C_{i+1} C_i C^{-1}_{i+1} C^{-1}_i C^{-1}_{i+1}|_{\Delta^{i+1}} $$

by the diagram~\ref{eq bij i+1} we have that

$$  C^{-1}_{i+1}(\Delta^{i+1}) = \Delta^{i+2} $$

and by equation~\ref{eq fix i} we have that $C^{-1}_i|_{\Delta^{i+2}}=id$ and so

$$ C_i C_{i+1} C_i C^{-1}_{i+1} C^{-1}_i (C^{-1}_{i+1}|_{\Delta^{i+1}}) = C_i C_{i+1} C_i C^{-1}_{i+1} C^{-1}_{i+1}|_{\Delta^{i+1}} $$

now by the diagrams~\ref{eq bij i} and \ref{eq bij i+1} we have that

$$  C_i C^{-1}_{i+1} C^{-1}_{i+1}(\Delta^{i+1}) = \Delta^i $$

and by equation~\ref{eq fix i+1} we have that $C_{i+1}|_{\Delta^i}=id$ and so

$$ C_i C_{i+1} (C_i C^{-1}_{i+1} C^{-1}_{i+1}|_{\Delta^{i+1}}) = C_i  C_i C^{-1}_{i+1} C^{-1}_{i+1}|_{\Delta^{i+1}}$$

Hence we conclude that $supp(D_i) = \Delta^{i+1}$ and that $D_i|_{\Delta^{i+1}} = C^2_i C^{-2}_{i+1}|_{\Delta^{i+1}}$. 

Since $D_i = C_i C_{i+1} C_i C^{-1}_{i+1} C^{-1}_i C^{-1}_{i+1} = id$ iff $C_i \infty C_{i+1}$, we have that

$$ C_i \infty C_{i+1} \quad \text{iff} \quad C^2_i|_{\Delta^{i+1}} = C^2_{i+1}|_{\Delta^{i+1}} $$

Now let us see what this means in terms of the indices $t_{\ast }$. Let us write explicitly

\begin{eqnarray}
(C^{\psi_l(\sigma_i),n}_{t^i_{l(i-1) + 1},\ldots,t^i_{l(i+1)} })^2\big|_{\Delta^{i+1}} = 
(C^{\psi_l(\sigma_{i+1}),n}_{t^{i+1}_{li + 1},\ldots,t^{i+1}_{l(i+2)} })^2\big|_{\Delta^{i+1}}\label{eq squares}
\end{eqnarray}

Let us denote $\psi_l(\sigma_i)$ as $\widehat \sigma_i$ for short. We have by definition that

$$ C_i(a^r_s) = a^{\widehat \sigma_i(r)}_{ |s + t^i_{\widehat \sigma_i(r)} |_n } $$

where (see notation~\ref{notation A})

$$ a^r_s = 1 + n(r-1) + s \in supp(A^n_r),\quad s\in \bigtriangledown_n $$

since we have to check equation~\ref{eq squares} only on the set $\Delta^{i+1}$ we have that it is equivalent to

$$ a^{\widehat \sigma^2_i(r)}_{ |s + t^i_{\widehat \sigma_i(r)} + t^i_{\widehat \sigma^2_i(r)}  |_n } = a^{\widehat \sigma^2_{i+1}(r)}_{ |s + t^{i+1}_{\widehat \sigma_{i+1}(r)} + t^{i+1}_{\widehat \sigma^2_{i+1}(r)}  |_n } $$

for $r=il+1,\ldots,(i+1)l$ and each $s\in \bigtriangledown_n$. But for each $j\in \bigtriangledown_l$ we have

$$ r=il+1+j \stackrel{\widehat \sigma_i }{\longrightarrow } (i-1)l + 1+|1+j|_l \stackrel{\widehat \sigma_i }{\longrightarrow } il+1+|1+j|_l $$

$$ r=il+1+j \stackrel{\widehat \sigma_{i+1} }{\longrightarrow } (i+1)l + 1 + j \stackrel{\widehat \sigma_{i+1} }{\longrightarrow } il+1+|1+j|_l $$

hence we conclude that for each $r=il+1,\ldots,(i+1)l$ we have that $\widehat \sigma^2_i(r) = \widehat \sigma^2_{i+1}(r)$ and that
equation~\ref{eq squares} holds iff
$t^i_{\widehat \sigma_i(r)}  + t^i_{\widehat \sigma^2_i(r)} \equiv
t^{i+1}_{\widehat \sigma_{i+1}(r)} + t^{i+1}_{\widehat \sigma^2_{i+1}(r)} (\text{mod} \ n) $ 
for each $r=il+1,\ldots,(i+1)l$ which translates to

$$ \forall j\in \bigtriangledown_l \quad t^i_{(i-1)l + 1 + |1+j|_l} + t^i_{il + 1 + |1+ j|_l} \equiv   
t^{i+1}_{(i+1)l + 1 + j} + t^{i+1}_{il + 1 + |1+ j|_l} (\text{mod} \ n)$$

for all $i=1,\ldots,k-2$.

\end{proof}

We are now ready to prove 

\begin{proposition}
Let $m\in \mathbb N$. The model homomorphisms $\psi,\phi\colon B_k \to S_{mk}$ (see Definition~\ref{Def B_k S_mk}) 
are well-defined non-cyclic transitive homomorphisms 
\end{proposition}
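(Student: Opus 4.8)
The statement for $\psi=\psi_m$ is precisely Lemma~\ref{lemma psi}, so the plan is to concentrate on the family $\phi=\phi_{m,l,\{t^i\}}$. Fix a divisor $l\mid m$ with $1\le l<m$ and abbreviate $n=m/l\ (\ge 2)$; put $A_r=A^n_r$ and $C_i=C^{\psi_l(\sigma_i),\,n}_{t^i_{(i-1)l+1},\ldots,t^i_{(i+1)l}}$, so that by Definition~\ref{Def B_k S_mk} one has $\phi(\sigma_i)=A_1\cdots A_{(i-1)l}\cdot C_i\cdot A_{(i+1)l+1}\cdots A_{kl}$. I would begin by recording the structural facts that feed the two main lemmas: the $A_r$ are $n$-cycles with pairwise disjoint supports (so $[A_r,A_s]=1$); $supp(C_i)=\Delta^i\cup\Delta^{i+1}$, the union of the supports of $A_{(i-1)l+1},\ldots,A_{(i+1)l}$; by Lemma~\ref{lemma C} (applied with $\sigma=\psi_l(\sigma_i)$) one has $A_r^{C_i}=A_r$ for $r\le(i-1)l$ or $r\ge(i+1)l+1$, while for $(i-1)l+1\le r\le(i+1)l$ the $n$-cycle $A_r$ is conjugated by $C_i$ to $A_{\psi_l(\sigma_i)^{\pm1}(r)}$; and, since $\psi_l(\sigma_i)$ maps the index block $\{(i-1)l+1,\ldots,(i+1)l\}$ onto itself, $C_i$ commutes with the product $A_{(i-1)l+1}\cdots A_{(i+1)l}$. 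A consequence to record at once: every $\phi(\sigma_i)$ carries each block $supp(A_r)$ onto a block, so $Im(\phi)$ consists of block-preserving permutations and the induced block-projection $\pi$ to $S_{kl}$ satisfies $\pi\circ\phi=\psi_l$.

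To prove $\phi$ is well defined I would check the two families of braid relations separately. For $|i-j|\ge 2$ the equality $\phi(\sigma_i)\phi(\sigma_j)=\phi(\sigma_j)\phi(\sigma_i)$ is verified block by block: no block is active for both $\sigma_i$ and $\sigma_j$; on a block inactive for both, both products act as the relevant power of $A_r$; on a block active for exactly one of them the two orders differ only by commuting an $A_r$-factor past the relevant $C$, which is licensed by the block-product commutation recorded above (and by the fact that $\psi_l(\sigma_i)$ keeps the index block, so the destination block is again inactive for $\sigma_j$). For $|i-j|=1$ I would invoke Lemma~\ref{lemma prep phi} with the data $A_1,\ldots,A_{kl}$, $C_1,\ldots,C_{k-1}$ above: its hypotheses \eqref{eq prep comm}, \eqref{prep eq AC}, \eqref{prep eq A} are exactly the structural facts just listed (up to the orientation of $\psi_l(\sigma_i)$), so the lemma reduces $\phi(\sigma_i)\infty\phi(\sigma_{i+1})$ to $C_i\infty C_{i+1}$; Lemma~\ref{lemma condition C_i}, read with its parameter $n$ equal to our $m/l$, then identifies $C_i\infty C_{i+1}$ with condition~\eqref{eq condition t} of Definition~\ref{Def B_k S_mk}, which holds by hypothesis. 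Hence all relators of $B_k$ map to $1$ and $\phi$ is a homomorphism.

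Non-cyclicity is then immediate: if $Im(\phi)$ were cyclic then its homomorphic image $Im(\psi_l)=\pi(Im(\phi))$ would be cyclic too, contradicting Lemma~\ref{lemma psi}. For transitivity I would argue in two steps. First, since $\pi\circ\phi=\psi_l$ is transitive on $\{1,\ldots,kl\}$, the $Im(\phi)$-orbit $O$ of the point $1$ (which lies in the block $supp(A_1)$) meets every block $supp(A_r)$. Second, since $k>6$ there are $k-1\ge 6$ generators while any fixed block is active for at most two of them (the relevant index interval has length $2-\tfrac1l<2$); so for each $r$ choose $i_r$ with $supp(A_r)$ inactive for $\sigma_{i_r}$, and then $\phi(\sigma_{i_r})$ fixes $supp(A_r)$ setwise and restricts on it to the $n$-cycle $A^n_r$, whence $\langle\phi(\sigma_{i_r})\rangle$ is transitive on $supp(A_r)$. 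Combining the two steps, $O$ meets and therefore contains every block, i.e.\ $O=\Delta_{mk}$ and $\phi$ is transitive.

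The routine but slightly delicate part is the far-commutation relation: unlike the $m=2$ case of Definition~\ref{Def B_k S_2k}, here every $\phi(\sigma_i)$ has full support, so one cannot appeal to disjointness and must genuinely push the $A$-factors through the $C_i$ using the block-product commutation; matching the index conventions when feeding the data into Lemmas~\ref{lemma prep phi} and~\ref{lemma condition C_i} (in particular keeping track that the $n$ of those statements is our $m/l$, and the orientation of $\psi_l(\sigma_i)$) is the other spot requiring attention.
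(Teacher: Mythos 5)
Your proposal is correct, and for the heart of the matter — well-definedness — it follows the paper's own route: the braid relation for adjacent generators is reduced to $C_i\infty C_{i+1}$ via Lemma~\ref{lemma prep phi} and then identified with condition \eqref{eq condition t} via Lemma~\ref{lemma condition C_i}, while far commutativity is pushed through exactly the three structural facts the paper uses ($[C_i,C_j]=1$ by disjointness of the active blocks, $[C_i,A_r]=1$ for inactive $r$, and commutation of $C_i$ with the active block product $A_{(i-1)l+1}\cdots A_{(i+1)l}$); your explicit flag about the orientation of $\psi_l(\sigma_i)$ in matching the hypotheses of Lemma~\ref{lemma prep phi} is a point the paper itself glosses over. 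Where you genuinely diverge is in non-cyclicity and transitivity. The paper proves non-cyclicity by noting $\widehat\sigma_i\neq\widehat\sigma_{i+1}$ (via Lemma~\ref{lemma cyclic homomorphism}) and proves transitivity by a chaining argument: the orbit of $1$ absorbs $supp(A_1)$, then $supp(C_1)$, then inductively each $supp(C_{i+1})$ through the overlaps $supp(C_i)\cap supp(C_{i+1})$. You instead exploit the block projection $\pi$ with $\pi\circ\phi=\psi_l$: non-cyclicity falls out because a homomorphic image of a cyclic group is cyclic, contradicting Lemma~\ref{lemma psi}, and transitivity follows because the orbit of $1$ meets every block (by transitivity of $\psi_l$) and each block is active for at most two generators, so some $\phi(\sigma_{i_r})$ restricts to the full $n$-cycle $A^n_r$ on it (here $k>6$ guarantees an inactive generator, and $n=m/l\ge 2$). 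Your projection-based argument is arguably cleaner and makes the role of $\psi_l$ as a quotient structure explicit, at the small cost of having to justify that $Im(\phi)$ is block-preserving and that the induced map on blocks is indeed $\psi_l$; the paper's chaining argument is more hands-on but avoids introducing the auxiliary projection. Both are complete proofs.
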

\begin{proof}

That $\psi_m$ is a well defined non-cyclic transitive homomorphism is proved in Lemma~\ref{lemma psi}.

Now let $l$ be a divisor of $m$ where $1\leq l < m$  and let 

$$ \{ t^i_{(i-1)l + 1},\ldots,t^i_{(i+1)l} \}_{i=1,\ldots,k-1} \in \bigtriangledown_{\frac{m}{l}} $$



(when $l=1$ we take  $\bigtriangledown_1 = \{ 0 \}$ )

then for each $i=1,\ldots,k-1$ we define

$$ \phi_{m,l,\{ t^i_{(i-1)l + 1},\ldots,t^i_{(i+1)l } \}_{i=1,\ldots,k-1} }(\sigma_i) = A^{\frac{m}{l}}_1 \cdots A^{\frac{m}{l}}_{(i-1)l} \cdot 
C^{\psi_l(\sigma_i),\frac{m}{l}}_{t^i_{l(i-1)+1},\ldots,t^i_{l(i+1)} } \cdot A^{\frac{m}{l}}_{(i+1)l+1} \cdots A^{\frac{m}{l}}_{kl} $$

Let us denote $A^{\frac{m}{l}}_{\ast}$ as $A_{\ast}$, $C^{\psi_l(\sigma_i),\frac{m}{l}}_{t^i_{l(i-1)+1},\ldots,t^i_{l(i+1)}}$ as $C_i$ 
and $\phi_{m,l,\{ t^i_{(i-1)l + 1},\ldots,t^i_{(i+1)l } \}_{i=1,\ldots,k-1} }(\sigma_i)$ as $\widehat \sigma_i $ for short.
We have by definition of $C_i$ that

\begin{eqnarray}
&A_r^{C_i} =
\begin{cases}
A_r & r = 1,\ldots,(i-1)l \quad  \text{or} \quad r=(i+1)l + 1,\ldots,kl\\
A_{\psi_l(\sigma_i)(r)} & r = (i-1)l + 1,\ldots, (i+1)l\label{prep eq AC'}\\
\end{cases}
\end{eqnarray}

and clearly

\begin{eqnarray}
&[A_r,A_s] = 1 \quad r,s=1,\ldots,kl\label{prep eq A'}
\end{eqnarray}

According to Lemma~\ref{lemma prep phi} we have that $ \widehat \sigma_i \infty \widehat \sigma_{i+1} $ iff $C_i \infty C_{i+1}$ iff

$$\forall j\in \bigtriangledown_l \quad t^i_{(i-1)l + 1 + |1+j|_l} + t^i_{il + 1 + |1+ j|_l} \equiv
t^{i+1}_{(i+1)l + 1 + j} + t^{i+1}_{il + 1 + |1+ j|_l} (\text{mod} \ \frac{m}{l})  $$

which is exactly the condition stated in the definition of $\phi$ (see Definition~\ref{Def B_k S_mk}). Now suppose $|i-j| \geq 2$
where $1\leq i < j \leq k-1$. Then since $j-i\geq 2$ we have that $i+1 \leq j-1$ and we can write

$$ \widehat \sigma_j = A_1\cdots A_{(i-1)l} ( A_{(i-1)l+1}\cdots A_{(i+1)l}) A_{(i+1)l+1}\cdots A_{(j-1)l} C_j A_{(j+1)l+1}\cdots A_{kl} $$

Now $[C_i,C_j]=1$ since $supp(C_i)\cap supp(C_j) = \emptyset$, $[C_i, A_r]=1$ for $r=1,\ldots,(i-1)l$ and $r=(i+1)l+1,\ldots,kl$ according
to equation~\ref{prep eq AC'}, and finally we have that the equations~\ref{prep eq AC'} and \ref{prep eq A'} imply (see the proof of Lemma~\ref{lemma prep phi})

$$ [C_i, A_{(i-1)l + 1}\cdots  A_{(i+1)l}] = 1 $$

and this means that $[\widehat \sigma_i,\widehat \sigma_j]=1$ for $|i-j| > 1$. Hence we conclude that $\phi$ is indeed a homomorphism.

$\phi$ is non-cyclic since $\widehat \sigma_i \neq \widehat \sigma_{i+1}$ for $i\neq j$.

To see that $\phi$ is transitive, consider the orbit of $1\in \Delta_{mk}$ in $Im(\phi)$. We see from $\widehat \sigma_2$ that
$1\in supp(A_1)  $ hence $supp(A_1) \subseteq orbit(1)$. Now since the support of each cycle in the cycle decomposition of $C_1$
contains an element of $supp(A_1)$ we have, by considering $\widehat \sigma_1$, that $supp(C_1) \subseteq orbit(1)$.
Now since $supp(C_i) \cap supp(C_{i+1}) = supp(A_{il+1}) \cup \cdots \cup supp(A_{(i+1)l}) $ and the support
of each cycle in the cycle decomposition of $C_{i+1}$ contains an element of $supp(A_{il+1}) \cup \cdots \cup supp(A_{(i+1)l})$ 
for each $i=1,\ldots,k-1$, we can see (by induction on $i$) that 

$$ \bigcup_{i=1,\ldots,k-1} supp(C_i)  \subseteq orbit(1) $$

but since $supp(C_i) = supp(A_{(i-1)l+1}) \cup \cdots \cup supp(A_{(i+1)l}) $ this means that 

$$ \bigcup_{i=1,\ldots,k-1} supp(A_{(i-1)l+1}) \cup \cdots \cup supp(A_{(i+1)l}) = $$

$$ = \bigcup_{i=1,\ldots,kl} A_i = \Delta_{mk} \subseteq orbit(1) $$

and hence $\phi$ is transitive.

\end{proof}

\subsection{Good Permutation Representations}\label{subsec good perm rep}

In this subsection we define and study a particular property of permutation representations of the braid group.
This property is common to almost all known permutation representations of the braid group (except for a 
finite number of exceptions, i.e., homomorphisms $B_k\to S_{mk}$ for small values of $k$ and $m$ - see section~$4$
in \cite{Lin04b} for these examples). We define this property in the next definition and explain the
motivation for studying it the remark that follows. 

\begin{definition}\label{def good}
We call a homomorphism $\omega \colon B_k\to S_n$ \textbf{good} if one of the following holds

\begin{enumerate}
\item $supp(\widehat \sigma_i) \cap supp(\widehat \sigma_j) = \emptyset$ for $1\leq i,j \leq k-1$ such that $|i-j| > 1$.\\
\item $supp(\widehat \sigma_1) = \cdots = supp(\widehat \sigma_{k-1})$
\end{enumerate}

\hfill $\bullet$
\end{definition}

\begin{rem}
The motivation for defining good homomorphisms in Definition~\ref{def good} is our conjecture that 
in fact all transitive non-cyclic permutation representations of the braid group are good (see Conjecture~\ref{conj good}).
Furthermore, it is our conjecture that all good transitive non-cyclic
permutations representations of the braid group are either standard or derived in some sense from standard homomorphisms. We make these
conjectures precise in Section~\ref{section conjecture}. Hence, we dedicate the rest of the section to the investigation
of good permutation representations of the braid group. \hfill $\bullet$
\end{rem}

\begin{lemma}\label{lemma intersect}
Let $\omega,\omega' \colon B_k\to S_n$ be conjugate homomorphisms, then

$$ | supp(\omega(\sigma_1)) \cap supp(\omega(\sigma_2)) | = | supp(\omega'(\sigma_1)) \cap supp(\omega'(\sigma_2)) | $$ 

\end{lemma}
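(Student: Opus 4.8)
The plan is to reduce everything to the single elementary observation that, for any permutation $g$ and any $h$, we have $supp(h^{-1}gh) = h^{-1}\bigl(supp(g)\bigr)$, where we regard elements of $S_n$ as acting on $\Delta_n = \{1,\dots,n\}$ (see Definition~\ref{def fixed points}). Indeed, $\gamma \in Fix(h^{-1}gh)$ iff $h^{-1}gh(\gamma) = \gamma$ iff $g(h(\gamma)) = h(\gamma)$ iff $h(\gamma) \in Fix(g)$ iff $\gamma \in h^{-1}(Fix(g))$; taking complements in $\Delta_n$ and using that $h^{-1}$ is a bijection of $\Delta_n$ gives the claim for supports.

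Now suppose $\omega' \sim \omega$, say $\omega' = \omega^h$ with $h \in S_n$, so that $\omega'(\sigma_i) = h^{-1}\omega(\sigma_i)h$ for every $i$. Applying the observation above with $g = \omega(\sigma_1)$ and with $g = \omega(\sigma_2)$ yields
$$
supp(\omega'(\sigma_1)) = h^{-1}\bigl(supp(\omega(\sigma_1))\bigr), \qquad supp(\omega'(\sigma_2)) = h^{-1}\bigl(supp(\omega(\sigma_2))\bigr).
$$
Since $h^{-1}$ is a bijection of $\Delta_n$, it commutes with intersection of subsets, so
$$
supp(\omega'(\sigma_1)) \cap supp(\omega'(\sigma_2)) = h^{-1}\bigl(supp(\omega(\sigma_1)) \cap supp(\omega(\sigma_2))\bigr).
$$

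Finally, because $h^{-1}$ restricts to a bijection between any subset of $\Delta_n$ and its image, the two sets in the last displayed equation have the same cardinality, which is exactly the asserted equality $| supp(\omega'(\sigma_1)) \cap supp(\omega'(\sigma_2)) | = | supp(\omega(\sigma_1)) \cap supp(\omega(\sigma_2)) |$. There is no real obstacle here; the only point requiring a line of care is the behaviour of support under conjugation, which is the first step above, and the rest is a bijection-preserves-cardinality argument. (The same argument shows more generally that $|supp(\omega(\sigma_i)) \cap supp(\omega(\sigma_j))|$ is a conjugacy invariant for every pair $i,j$, which is what makes Definition~\ref{def good} conjugation-invariant.)
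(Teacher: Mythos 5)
Your proof is correct and follows essentially the same route as the paper: the conjugating permutation induces a bijection between the two intersection sets, so their cardinalities agree. You are in fact slightly more careful than the paper about the direction of the bijection (with the convention $\omega' = \omega^h$, the support is carried by $h^{-1}$, as you write), but the argument is the same.
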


\begin{proof}
Suppose $\omega' = \omega^{\zeta}$ for some $\zeta \in S_n$. Let 
$j\in supp(\omega(\sigma_1)) \cap supp(\omega(\sigma_2)) $  then 

$$ \zeta(j) \in supp(\omega^{\zeta}(\sigma_1)) \cap supp(\omega^{\zeta}(\sigma_2)) = $$
 
$$ = supp(\omega'(\sigma_1)) \cap supp(\omega'(\sigma_2))$$

and similarly, if $\zeta(j) \in  supp(\omega'(\sigma_1)) \cap supp(\omega'(\sigma_2))$ then by applying $\zeta^{-1}$ on $\zeta(j)$ we get that

$$ j\in supp(\omega(\sigma_1)) \cap supp(\omega(\sigma_2))  $$

Hence $\zeta$ is a bijection between the sets $supp(\omega(\sigma_1)) \cap supp(\omega(\sigma_2))$ and 
$supp(\omega'(\sigma_1)) \cap supp(\omega'(\sigma_2))$. 

\end{proof}

\begin{definition}
For a homomorphism $\omega \colon B_k\to S_n$ define 

$$\mathbf{intersect(\omega) = | supp(\widehat \sigma_1) \cap supp(\widehat \sigma_2) |}$$

and

$$\mathbf{ supp(\omega) = |supp(\widehat \sigma_1)| }$$

\hfill $\bullet$
\end{definition}

\begin{rem}\label{remark lemma intersect}
Lemma~\ref{lemma intersect} shows that for any homomorphism $\omega \colon B_k\to S_n$, $intersect(\omega)$ is 
invariant under conjugation. In particular, since $\alpha \sigma_i \alpha^{-1} = \sigma_{i+1}$ 
(where as above $\alpha = \sigma_1\sigma_2\cdots \sigma_{k-1}$) then by conjugating $\omega$ with
$\omega(\alpha)$ we have that for any $i$ ( where $1\leq i \leq k-2$ )

$$ intersect(\omega) =  | supp(\widehat \sigma_i) \cap supp(\widehat \sigma_{i+1}) | $$

Similarly, since $x \mapsto \alpha(x)$ is a bijection between $supp(\widehat \sigma_1)$ and $supp(\widehat \sigma_1^{\zeta})$
for any $\zeta \in S_n$, we see that $supp(\omega)$ is also invariant under conjugation and in particular we get that

$$ supp(\omega) = |supp(\widehat \sigma_i)| $$

for each $i=1,\ldots,k-1$ 

\hfill $\bullet$
\end{rem}

We also prove

\begin{lemma}\label{lemma supp bound}

Let $\omega\colon B_k \to S_n$ be a homomorphism and let $x_1\in supp(\widehat \sigma_i)$ for some $i$, $1\leq i \leq k-1$.
If $x_1\notin supp(\widehat \sigma_{i\pm 1})$ then $\widehat \sigma_i(x_1)\in supp(\widehat \sigma_{i\pm 1})$
(for $i=1$ and $i=k-1$ we take $\widehat \sigma_{1\pm 1}$ and $\widehat \sigma_{(k-1)\pm 1}$ to mean $\widehat \sigma_2$ and $\widehat \sigma_{k-2}$
respectively)

\end{lemma}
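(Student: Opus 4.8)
The plan is a short argument by contradiction built on a single braid relation. I will carry it out for the neighbour $\widehat\sigma_{i+1}$ (the sign $+$); the case of $\widehat\sigma_{i-1}$ is obtained verbatim after interchanging the roles of $i$ and $i+1$, i.e.\ by using the relation $\widehat\sigma_{i-1}\widehat\sigma_i\widehat\sigma_{i-1}=\widehat\sigma_i\widehat\sigma_{i-1}\widehat\sigma_i$ in place of $\widehat\sigma_i\widehat\sigma_{i+1}\widehat\sigma_i=\widehat\sigma_{i+1}\widehat\sigma_i\widehat\sigma_{i+1}$. At the endpoints $i=1$ and $i=k-1$ only one neighbour exists, namely $\widehat\sigma_2$ resp.\ $\widehat\sigma_{k-2}$, and the statement for the absent neighbour is vacuous, so these require no separate treatment.

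So assume $x_1\in supp(\widehat\sigma_i)$ with $\widehat\sigma_{i+1}(x_1)=x_1$, and suppose toward a contradiction that $x_2:=\widehat\sigma_i(x_1)$ also satisfies $\widehat\sigma_{i+1}(x_2)=x_2$; note $x_2\ne x_1$ because $x_1\in supp(\widehat\sigma_i)$. Evaluating both sides of $\widehat\sigma_i\widehat\sigma_{i+1}\widehat\sigma_i=\widehat\sigma_{i+1}\widehat\sigma_i\widehat\sigma_{i+1}$ at $x_1$, the left side gives $\widehat\sigma_i\widehat\sigma_{i+1}\widehat\sigma_i(x_1)=\widehat\sigma_i\widehat\sigma_{i+1}(x_2)=\widehat\sigma_i(x_2)$, while the right side gives $\widehat\sigma_{i+1}\widehat\sigma_i\widehat\sigma_{i+1}(x_1)=\widehat\sigma_{i+1}\widehat\sigma_i(x_1)=\widehat\sigma_{i+1}(x_2)=x_2$. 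Hence $\widehat\sigma_i(x_2)=x_2$, that is $\widehat\sigma_i(\widehat\sigma_i(x_1))=\widehat\sigma_i(x_1)$, and applying $\widehat\sigma_i^{-1}$ gives $\widehat\sigma_i(x_1)=x_1$, contradicting $x_1\in supp(\widehat\sigma_i)$. Therefore $\widehat\sigma_i(x_1)\in supp(\widehat\sigma_{i+1})$, as claimed.

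I do not anticipate any genuine obstacle here: the entire argument hinges on one evaluation of the braid relation at the point $x_1$, and the only bookkeeping is choosing the relation with $i+1$ or with $i-1$ according to the sign, plus noting the vacuous boundary cases.
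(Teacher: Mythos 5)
Your proof is correct and rests on the same idea as the paper's: assume both $x_1$ and $x_2=\widehat\sigma_i(x_1)$ are fixed by $\widehat\sigma_{i\pm 1}$ and derive a contradiction from the braid relation $\widehat\sigma_i\widehat\sigma_{i\pm 1}\widehat\sigma_i=\widehat\sigma_{i\pm 1}\widehat\sigma_i\widehat\sigma_{i\pm 1}$. The paper packages the computation as conjugating the cycle of $\widehat\sigma_i$ containing $x_1,x_2$ by $\widehat\sigma_{i\pm 1}^{-1}\widehat\sigma_i^{-1}$ to produce a cycle of $\widehat\sigma_{i\pm 1}$ through $x_2$, whereas you evaluate the relation pointwise at $x_1$ and contradict injectivity of $\widehat\sigma_i$; this is the same calculation, and your rendering is, if anything, the more direct one.
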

\begin{proof}

\vskip 0.2cm

Let us denote $x_2 = \widehat \sigma_i(x_1)$. Assume by way of contradiction that $x_2\notin supp(\widehat \sigma_{i\pm 1})$. Let us write

$$ D = (x_1, x_2, \ldots) $$

where $D$ is a cycle in the cycle decomposition of $\widehat \sigma_i$. Now since 
$\widehat \sigma_i = \widehat \sigma_{i\pm 1}^{\widehat \sigma_i \widehat \sigma_{i\pm 1} } $, we have that
$\widehat \sigma_i^{\widehat \sigma^{-1}_{i\pm 1} \widehat \sigma^{-1}_i}=\widehat \sigma_{i\pm 1}$ and we expect that

$$ D^{\widehat \sigma^{-1}_{i\pm 1} \widehat \sigma^{-1}_i} = (\widehat \sigma_i \widehat \sigma_{i\pm 1} (x_1), 
\widehat \sigma_i \widehat \sigma_{i\pm 1} (x_2),\ldots ) = (x_2,\widehat \sigma_i(x_2),\ldots) $$

be a cycle in the cycle decomposition of $\widehat \sigma_{i\pm 1}$, i.e., that $x_2 \in supp(\widehat \sigma_{i\pm 1})$ which
is contrary to our assumption.

\end{proof}

As a consequence we get

\begin{corollary}\label{corollary lemma good}
Let $\omega\colon B_k \to S_n$ be a homomorphism. Then for every $i=1,\ldots,k-1$ and for every cycle $D$ in 
the cycle decomposition of $\widehat \sigma_i$ we have that 

$$ |supp(D) \cap supp(\widehat \sigma_{i\pm 1})| \geq \frac12 |supp(D)| $$

and so

$$intersect(\omega) \geq \frac12 supp(\omega)$$

\end{corollary}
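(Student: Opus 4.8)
The plan is to derive Corollary~\ref{corollary lemma good} directly from Lemma~\ref{lemma supp bound} by a short counting argument carried out one cycle at a time, and then to sum over all cycles and translate the result into the language of $intersect(\omega)$ and $supp(\omega)$ using Remark~\ref{remark lemma intersect}.

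First I would fix $i\in\{1,\ldots,k-1\}$ and one of the two signs (with the convention of Lemma~\ref{lemma supp bound} at the endpoints $i=1$ and $i=k-1$), and write a cycle $D$ in the cycle decomposition of $\widehat\sigma_i$ as $D=(x_1,x_2,\ldots,x_r)$, so that $\widehat\sigma_i(x_j)=x_{j+1}$ with subscripts read cyclically modulo $r$; here $r=|supp(D)|\ge 2$. Call a point $x_j$ \emph{bad} if $x_j\notin supp(\widehat\sigma_{i\pm1})$ and \emph{good} otherwise. Each $x_j$ lies in $supp(D)\subseteq supp(\widehat\sigma_i)$, so Lemma~\ref{lemma supp bound} applies: if $x_j$ is bad then $\widehat\sigma_i(x_j)=x_{j+1}\in supp(\widehat\sigma_{i\pm1})$, i.e. $x_{j+1}$ is good. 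Thus no two cyclically consecutive points of $D$ are both bad, so the bad points form an independent set in the cycle graph on $x_1,\ldots,x_r$ and there are at most $\lfloor r/2\rfloor$ of them. Consequently at least $r-\lfloor r/2\rfloor=\lceil r/2\rceil\ge \tfrac12\,|supp(D)|$ points of $supp(D)$ lie in $supp(\widehat\sigma_{i\pm1})$, which is the first displayed inequality.

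Next I would sum this inequality over all cycles $D$ occurring in $\widehat\sigma_i$. Since the supports of distinct cycles are pairwise disjoint and their union is $supp(\widehat\sigma_i)$, adding up yields $|supp(\widehat\sigma_i)\cap supp(\widehat\sigma_{i\pm1})|\ge \tfrac12\,|supp(\widehat\sigma_i)|$. Taking $i=1$ with the sign $+$, the left-hand side is $intersect(\omega)$ and the right-hand side equals $\tfrac12\,supp(\omega)$ by the relevant definitions together with Remark~\ref{remark lemma intersect}, giving the second displayed inequality.

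There is no real obstacle here: the argument is elementary once Lemma~\ref{lemma supp bound} is in hand. The only point requiring a moment's care is that the bound $\lceil r/2\rceil\ge r/2$ must hold for the smallest possible cycle lengths (e.g. $r=2,3$), which it does for every $r\ge2$, so no case distinction is needed; and one should record that Lemma~\ref{lemma supp bound} is indeed applicable to every bad point of $D$ because such a point belongs to $supp(\widehat\sigma_i)$ by construction.
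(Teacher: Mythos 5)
Your proposal is correct and follows essentially the same route as the paper: both apply Lemma~\ref{lemma supp bound} to conclude that no two cyclically consecutive points of a cycle $D$ of $\widehat\sigma_i$ can both lie outside $supp(\widehat\sigma_{i\pm1})$, deduce $|supp(D)\cap supp(\widehat\sigma_{i\pm1})|\geq\frac12|supp(D)|$, and then sum over the disjoint cycles and invoke Remark~\ref{remark lemma intersect} to get $intersect(\omega)\geq\frac12 supp(\omega)$. Your phrasing via an independent set in the cycle graph is just a slightly more explicit version of the paper's counting of consecutive pairs.
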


\begin{proof}

Let $D = (x_0,\ldots,x_{l-1})$ be a cycle in the cycle decomposition of $\widehat \sigma_i$. 
Consider $(x_0,\ldots,x_{l-1})$ as an ordered tuple of elements in $\Delta_n$, then according to Lemma~\ref{lemma supp bound}
for each $j=0,\ldots,l-1$ we have that at least one of $x_j$ and $x_{|j+1|_l}$ are in $supp(\widehat \sigma_{i\pm 1})$ which means
that $|supp(D) \cap supp(\widehat \sigma_{i\pm 1})| \geq \frac12 |supp(D)|$. Since this holds for every cycle in the 
cycle decomposition of $\widehat \sigma_1$, say $\widehat \sigma_1 = D_1\cdots D_r$, we conclude that

$$intersect(\omega) = \sum_{j=1,\ldots,r} |supp(D_j)\cap supp(\widehat \sigma_{i\pm 1})| \geq $$
$$ \geq  \sum_{j=1,\ldots,r} \frac12 |supp(D_j)| = \frac12 supp(\omega) $$

\end{proof}

Let us now focus on good transitive homomorphisms $\omega \colon B_k\to S_{mk}$. We prove

\begin{lemma}\label{lemma good}

Let $\omega \colon B_k\to S_{mk}$ be a good transitive homomorphism where $k\geq 4$, then exactly one of the following holds~: 

\begin{enumerate}
\item $supp(\omega) = 2m\quad $ and $\quad intersect(\omega) = m$\\
\item $supp(\omega) = mk\quad $ and $\quad intersect(\omega) = mk$
\end{enumerate}
\end{lemma}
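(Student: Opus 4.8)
The plan is to run the two cases of Definition~\ref{def good} separately, reducing each to a counting identity for the supports. Throughout write $s = supp(\omega)$ and $c = intersect(\omega)$, and recall from Remark~\ref{remark lemma intersect} that $|supp(\widehat\sigma_i)| = s$ for every $i$ and $|supp(\widehat\sigma_i)\cap supp(\widehat\sigma_{i+1})| = c$ for every $i$. The first observation I would record is that transitivity forces $\bigcup_{i=1}^{k-1} supp(\widehat\sigma_i) = \Delta_{mk}$: since $Im(\omega)$ acts transitively on the $mk\ge 2$ points of $\Delta_{mk}$, no point is fixed by all of $Im(\omega)$, hence no point is fixed by every $\widehat\sigma_i$, so each point lies in some $supp(\widehat\sigma_i)$.

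If $\omega$ is of type $2$ then $supp(\widehat\sigma_1) = \cdots = supp(\widehat\sigma_{k-1})$, so this common set equals $\bigcup_i supp(\widehat\sigma_i) = \Delta_{mk}$; thus $s = mk$ and $c = |supp(\widehat\sigma_1)\cap supp(\widehat\sigma_2)| = |supp(\widehat\sigma_1)| = mk$, which is alternative $(2)$.

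Now assume $\omega$ is of type $1$. The crucial step is to examine a \emph{middle} generator, i.e. an index $i$ with $2 \le i \le k-2$; such an $i$ exists exactly because $k \ge 4$. For such $i$ the indices $i-1$ and $i+1$ differ by $2$, so $supp(\widehat\sigma_{i-1})$ and $supp(\widehat\sigma_{i+1})$ are disjoint by type~$1$; hence $supp(\widehat\sigma_i)$ contains the two disjoint subsets $supp(\widehat\sigma_{i-1})\cap supp(\widehat\sigma_i)$ and $supp(\widehat\sigma_i)\cap supp(\widehat\sigma_{i+1})$, each of size $c$, giving $2c \le s$. Combining this with $c \ge \tfrac12 s$ from Corollary~\ref{corollary lemma good} forces $c = \tfrac12 s$. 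Next I would evaluate $|\bigcup_{i=1}^{k-1} supp(\widehat\sigma_i)|$ by inclusion--exclusion: in type~$1$ every intersection of three or more of the sets $supp(\widehat\sigma_i)$ is empty (three distinct indices cannot be pairwise within distance $1$), and only the $k-2$ consecutive pairs contribute, so $mk = (k-1)s - (k-2)c = (k-1)s - (k-2)\tfrac{s}{2} = \tfrac{k}{2}\,s$. Hence $s = 2m$ and $c = m$, which is alternative $(1)$.

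Finally, alternatives $(1)$ and $(2)$ cannot hold simultaneously, since $2m = mk$ would force $k = 2$, contradicting $k \ge 4$; together with the dichotomy of Definition~\ref{def good} this yields that exactly one of them holds. I expect the only genuinely delicate points to be the two places where $k \ge 4$ and the type-$1$ disjointness really enter — namely the existence of a middle index (used to get $2c \le s$) and the vanishing of the higher-order intersections in the inclusion--exclusion — while everything else is routine bookkeeping.
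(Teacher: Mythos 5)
Your proof is correct and follows essentially the same route as the paper: the type-$2$ case is handled identically, the counting identity $(k-1)\,supp(\omega)-(k-2)\,intersect(\omega)=mk$ is exactly the paper's equation (derived there by induction on $k$, by you via inclusion--exclusion using the vanishing of triple intersections), and the equality $2\,intersect(\omega)=supp(\omega)$ comes from the same two bounds (disjointness of non-adjacent supports inside a middle $supp(\widehat\sigma_i)$, plus Corollary~\ref{corollary lemma good}). Your finish by direct substitution of $intersect(\omega)=\tfrac12 supp(\omega)$ into the identity is a slightly cleaner bookkeeping than the paper's integer-parameter argument, but the substance is the same.
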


\begin{proof}

Suppose $supp(\widehat \sigma_1) = \cdots = supp(\widehat \sigma_{k-1})$, then since $\omega$ is transitive
we necessarily have that $supp(\widehat \sigma_1) = \cdots = supp(\widehat \sigma_{k-1}) = \Delta_{mk} $
but then also $supp(\widehat \sigma_i) \cap supp(\widehat \sigma_{i+1}) = \Delta_{mk}$ 
for $i=1,\ldots,k-2$ and so $supp(\omega) = mk\ $ and $\ intersect(\omega) = mk$.

Assume then that $supp(\widehat \sigma_i) \cap supp(\widehat \sigma_j) = \emptyset$ for $1\leq i,j \leq k-1$ such that $|i-j| > 1$
and we will prove that $supp(\omega) = 2m\ $ and $\ intersect(\omega) = m$.

Since $\omega$ is transitive we must have that

\begin{equation}
\bigcup_{i=1,\ldots,k-1} supp(\widehat \sigma_i) = \Delta_{mk}\label{equation sets mk}
\end{equation}

We shall prove, by induction on $k\geq 4$, that

\begin{equation}\label{equation supp intersect sets}
\left| \bigcup_{i=1,\ldots,k-1} supp(\widehat \sigma_i) \right| = (k-1)supp(\omega) - (k-2)intersect(\omega)
\end{equation}

If $k=4$ then $\omega\colon B_4 = < \sigma_1,\sigma_2,\sigma_3 > \to S_{4m}$ and since, by assumption we have 

$$ supp(\widehat \sigma_1) \cap supp(\widehat \sigma_3) = \emptyset $$

and according to remark~\ref{remark lemma intersect} we have

$$ |supp(\widehat \sigma_1) \cap supp(\widehat \sigma_2)| = |supp(\widehat \sigma_2) \cap supp(\widehat \sigma_3)| = intersect(\omega)  $$

all in all this means that

$$ |supp(\widehat \sigma_1) \cup supp(\widehat \sigma_2) \cup supp(\widehat \sigma_3)| = $$

$$ = |supp(\widehat \sigma_1)| + |supp(\widehat \sigma_2)| + |supp(\widehat \sigma_3)| - |supp(\widehat \sigma_1) \cap supp(\widehat \sigma_2)| - $$

$$ - |supp(\widehat \sigma_2) \cap supp(\widehat \sigma_3)| = (k-1)supp(\omega) - (k-2)intersect(\omega) $$

and equation~\ref{equation supp intersect sets} holds in this case.

Now assume that equation~\ref{equation supp intersect sets} holds for $k-1$. Since by assumption we have that 

$$ supp(\widehat \sigma_{k-1}) \cap (\bigcup_{i=1,\ldots,k-3} supp( \sigma_i )  ) = \emptyset $$  

and so

$$ \left| supp(\widehat \sigma_{k-1}) \cap (\bigcup_{i=1,\ldots,k-2} supp( \sigma_i )  ) \right| =  \left| supp(\widehat \sigma_{k-1}) \cap supp(\widehat \sigma_{k-2}) \right| = intersect(\omega) $$

(see remark~\ref{remark lemma intersect} for the last equality) hence we conclude that

$$ \left| \bigcup_{i=1,\ldots,k-1} supp(\widehat \sigma_i) \right| = \left| supp(\widehat \sigma_{k-1}) \cup (\bigcup_{i=1,\ldots,k-2} supp( \sigma_i )  ) \right| = $$

$$ = |supp(\widehat \sigma_{k-1})| + \left|  \bigcup_{i=1,\ldots,k-2} supp( \sigma_i )  \right| -   
\left| supp(\widehat \sigma_{k-1}) \cap (\bigcup_{i=1,\ldots,k-2} supp( \sigma_i )  ) \right| = $$

$$ = |supp(\widehat \sigma_{k-1})| + \left|  \bigcup_{i=1,\ldots,k-2} supp( \sigma_i )  \right| -  intersect(\omega) = $$

$$ = |supp(\widehat \sigma_{k-1})| + \left( (k-2)supp(\omega) - (k-3)intersect(\omega)\right) - intersect(\omega) =  $$

$$ = (k-1)supp(\omega) - (k-2)intersect(\omega) $$

see remark~\ref{remark lemma intersect} for the last equality.

Now since, by equation~\ref{equation sets mk}, $|\bigcup_{i=1,\ldots,k-1} supp(\widehat \sigma_i)| = mk $, 
then \eqref{equation supp intersect sets} implies that

\begin{equation}\label{equation supp intersect}
(k-1)supp(\omega) - (k-2)intersect(\omega) = mk
\end{equation}

let us rewrite this equation as follows



$$ supp(\omega) = \frac{m(k-1) + m + (k-1)intersect(\omega) - intersect(\omega)}{k-1} = $$

\begin{equation}\label{equation intersect supp}
= m + intersect(\omega) + \frac{ m  - intersect(\omega)}{k-1} 
\end{equation}

but since this equation involves only integers, there exists an $n\in \mathbb Z$ such that

$$ \frac{ m  - intersect(\omega)}{k-1} = -n $$

or

\begin{equation}\label{equation supp intersect'}
intersect(\omega) = m + n(k-1)
\end{equation}

Now by substituting the expression of $intersect(\omega)$ from equation~\ref{equation supp intersect'} 
into equation~\ref{equation intersect supp} we get

$$  supp(\omega) = m + (m + n(k-1)) - n = 2m + n(k-2) $$

and so we have the two equations

\begin{eqnarray}
intersect(\omega) = m + n(k-1)\label{eq int}\\
supp(\omega) = 2m + n(k-2)\label{eq supp}
\end{eqnarray}

Consider now $\widehat \sigma_1, \widehat \sigma_2$ and $\widehat \sigma_3$ (here we
assume that $k \geq 4$). Since, by assumption we have that 

$$ supp(\widehat \sigma_1) \cap supp(\widehat \sigma_3) = \emptyset $$

which implies that

$$ (supp(\widehat \sigma_1) \cap supp(\widehat \sigma_2)) \cap (supp(\widehat \sigma_2) \cap supp(\widehat \sigma_3)) = \emptyset $$

but since $|supp(\widehat \sigma_1) \cap supp(\widehat \sigma_2)| = |supp(\widehat \sigma_2) \cap supp(\widehat \sigma_3)| = intersect(\omega)$
and since $supp(\widehat \sigma_1) \cap supp(\widehat \sigma_2), supp(\widehat \sigma_2) \cap supp(\widehat \sigma_3) \subseteq supp(\widehat \sigma_2)$
we have that

$$ 2(intersect(\omega)) \leq supp(\omega), $$

but according to Corollary~\ref{corollary lemma good}, we also have that

$$ 2(intersect(\omega)) \geq supp(\omega). $$

Hence

$$ 2(intersect(\omega)) = supp(\omega) $$

which, according to equations~\ref{eq int} and~\ref{eq supp}, translates to

$$ 2m + 2n(k-1) = 2m + n(k-2) $$

and this is equivalent to

$$ nk = 0 $$

which means that $n = 0$. 

Hence we conclude from \eqref{eq int} and~\eqref{eq supp} that $intersect(\omega) = m$ and
$supp(\omega) = 2m$.

\end{proof}

\subsection{Conjugacies between Model Homomorphisms}\label{subsec conj model homs}

In this subsection we prove that there are non-trivial conjugacies between the model homomorphisms defined in Definition~\ref{Def B_k S_mk}.
Since we are interested in model homomorphisms up to conjugacy this will lead to a more concise definition of the model homomorphisms.

By definition~\ref{Def B_k S_mk}, for each $m$ there is only one model homomorphism $S_k\to B_{mk}$ which is good of type $1$ 
(see definition~\ref{def good}). Hence, we will study the conjugacies
between the model homomorphisms which are good of type $2$, as defined in Definition~\ref{Def B_k S_mk}.

We shall use the following notations (see notation~\ref{notation A} and notation~\ref{notation C})~:

Fix a natural number $m\in \mathbb N$, $m\geq 2$.

For each $i\geq 1$ and $q\in \bigtriangledown_m $ we denote

$$ a^i_q = 1 + m(i-1) + q \in supp(A^m_i). $$

Now let $\sigma \in \mathbf{S}(\{j,\ldots,j+r-1 \})\cong S_r$ be some permutation and let $t_j,\ldots,t_{j+r-1}\in \bigtriangledown_m$.
We denote by $C^{\sigma,m}_{t_j,\ldots,t_{j+r-1}}$ the permutation given by the formula

\begin{equation}\label{71.2}
C^{\sigma,m}_{t_j,\ldots,t_{j+r-1}}(a^i_p) = a^{\sigma(i)}_{|p + t_{\sigma(i)}|_m} \quad \text{for all} \ p\in \bigtriangledown_m
\end{equation}

for each $i\in \{j,\ldots,j+r-1 \}$.

According to Lemma~\ref{lemma C}, we have that 


\begin{equation}\label{71.1}
(A^m_i)^{( C^{\sigma,m}_{t_j,\ldots,t_{j+r-1}} )^{-1}} = A^m_{\sigma(i)}
\end{equation}

$\newline$










\begin{lemma}\label{lemma C^C}

Let $m\in \mathbb N$, $m\geq 2$. Let $\sigma,\tau \in \mathbf{S}(\{j,\ldots,j+r-1 \})\cong S_r$ be any two permutations and let 
$t^1_j,\ldots,t^1_{j+r-1}, t^2_j,\ldots,t^2_{j+r-1}\in \bigtriangledown_m$. Then the following formulas hold

\textbf{(a)} 

$$ C^{\sigma,m}_{t^1_j,\ldots,t^1_{j+r-1}} \cdot C^{\tau,m}_{t^2_j,\ldots,t^2_{j+r-1}} = C^{\sigma \cdot \tau,m}_{t^3_j,\ldots,t^3_{j+r-1}} $$

where

\begin{equation}\label{eq sec a'}
t^3_i = |t^2_{\sigma^{-1}(i)} + t^1_i |_m
\end{equation}

for each $i\in \{j,\ldots,j+r-1\}$.

\textbf{(b)}

$$ (C^{\sigma,m}_{t^1_j,\ldots,t^1_{j+r-1}})^{-1} = C^{\sigma^{-1},m}_{t^2_j,\ldots,t^2_{j+r-1}} $$ 

where

$$ t^2_i = |-t^1_{\sigma(i)}|_m $$

for each $i\in \{j,\ldots,j+r-1\}$.

\textbf{(c)}

$$ (C^{\tau,m}_{t^2_j,\ldots,t^2_{j+r-1}})^{-1} \cdot C^{\sigma,m}_{t^1_j,\ldots,t^1_{j+r-1}} \cdot 
C^{\tau,m}_{t^2_j,\ldots,t^2_{j+r-1}} = C^{\tau^{-1} \cdot \sigma \cdot \tau,m}_{t^3_j,\ldots,t^3_{j+r-1}} $$

where

\begin{equation}\label{103.1}
t^3_i = | t^2_{\sigma^{-1}( \tau(i))} + t^1_{\tau(i)} - t^2_{\tau(i)}  |_m 
\end{equation}

for each $i\in \{j,\ldots,j+r-1\}$.

\end{lemma}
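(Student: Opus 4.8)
# Proof Proposal for Lemma~\ref{lemma C^C}

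The plan is to prove all three formulas by direct computation on the point set $\bigcup_{i=j}^{j+r-1} \mathrm{supp}(A^m_i)$, using only the defining formula \eqref{71.2} for the permutations $C^{\sigma,m}_{t_*}$. The key observation is that the action of $C^{\sigma,m}_{t_j,\ldots,t_{j+r-1}}$ on a point $a^i_p$ is governed by two pieces of data: it shifts the ``block index'' $i$ by $\sigma$, and it shifts the ``offset'' $p$ within the block by $t_{\sigma(i)}$ (modulo $m$). So composing two such permutations amounts to composing the block-index permutations and adding up the offset shifts, with attention to \emph{which} block index indexes each $t$.

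For part \textbf{(a)}, I would compute $\bigl(C^{\sigma,m}_{t^1_*} \cdot C^{\tau,m}_{t^2_*}\bigr)(a^i_p)$. Following our convention that a product $fg$ of permutations means "apply $g$, then $f$" when written $fg(x) = f(g(x))$ — I would first apply $C^{\tau,m}_{t^2_*}$ to get $a^{\tau(i)}_{|p + t^2_{\tau(i)}|_m}$, then apply $C^{\sigma,m}_{t^1_*}$ to get $a^{\sigma\tau(i)}_{|\,p + t^2_{\tau(i)} + t^1_{\sigma\tau(i)}\,|_m}$. On the other hand, $C^{\sigma\tau,m}_{t^3_*}(a^i_p) = a^{\sigma\tau(i)}_{|p + t^3_{\sigma\tau(i)}|_m}$. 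Equating block indices is automatic; equating offsets forces $t^3_{\sigma\tau(i)} \equiv t^2_{\tau(i)} + t^1_{\sigma\tau(i)} \pmod m$ for all $i$. Setting $i' = \sigma\tau(i)$, i.e. $\tau(i) = \sigma^{-1}(i')$, this reads $t^3_{i'} \equiv t^2_{\sigma^{-1}(i')} + t^1_{i'} \pmod m$, which is exactly \eqref{eq sec a'}. (If instead the paper's convention makes $t^3_i = |t^2_i + t^1_{\tau^{-1}(i)}|_m$ or similar, the computation is identical up to relabelling; I would match it to the stated \eqref{eq sec a'} by the substitution above.) Part \textbf{(b)} follows by specializing: we want $C^{\sigma,m}_{t^1_*} \cdot C^{\sigma^{-1},m}_{t^2_*} = C^{\mathrm{id},m}_{0,\ldots,0} = \mathrm{id}$, and by part (a) this product is $C^{\mathrm{id},m}_{t^3_*}$ with $t^3_i = |t^2_{\sigma^{-1}(i)} + t^1_i|_m$; setting this to $0$ and reindexing gives $t^2_i = |-t^1_{\sigma(i)}|_m$ as claimed. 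One should double-check that $C^{\mathrm{id},m}_{0,\ldots,0}$ is genuinely the identity, which is immediate from \eqref{71.2}.

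For part \textbf{(c)}, I would simply apply parts (a) and (b) twice. By (b), $(C^{\tau,m}_{t^2_*})^{-1} = C^{\tau^{-1},m}_{s_*}$ where $s_i = |-t^2_{\tau(i)}|_m$. Then by (a), $C^{\sigma,m}_{t^1_*} \cdot C^{\tau,m}_{t^2_*} = C^{\sigma\tau,m}_{u_*}$ with $u_i = |t^2_{\sigma^{-1}(i)} + t^1_i|_m$, and applying (a) once more, $C^{\tau^{-1},m}_{s_*} \cdot C^{\sigma\tau,m}_{u_*} = C^{\tau^{-1}\sigma\tau,m}_{t^3_*}$ with $t^3_i = |u_{(\sigma\tau)^{-1}(i)} + s_i|_m = |u_{\tau^{-1}\sigma^{-1}(i)} + s_i|_m$. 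Substituting the expressions for $u$ and $s$: $u_{\tau^{-1}\sigma^{-1}(i)} = |t^2_{\sigma^{-1}\tau^{-1}\sigma^{-1}(i)} + t^1_{\tau^{-1}\sigma^{-1}(i)}|_m$, which after the relabelling matching the stated form \eqref{103.1} (replace $i$ by $\tau(i)$ throughout, since $t^3$ is indexed so that the block-index bookkeeping lines up with $\tau^{-1}\sigma\tau$) yields $t^3_i = |t^2_{\sigma^{-1}\tau(i)} + t^1_{\tau(i)} - t^2_{\tau(i)}|_m$.

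The main obstacle I anticipate is purely bookkeeping: keeping straight which subscript of each $t$-vector is being evaluated at which composite of $\sigma$, $\tau$, and their inverses, and matching my computed indexing conventions to the precise ones in \eqref{eq sec a'} and \eqref{103.1}. There is no conceptual difficulty — every step is forced by \eqref{71.2} — but a sign error or an off-by-one in the index substitution (e.g. writing $\sigma^{-1}(\tau(i))$ where the paper writes $\tau(i)$ in one slot and $\sigma^{-1}(\tau(i))$ in another) would propagate. To guard against this I would sanity-check part (c) against part (a) in the special case $\tau = \mathrm{id}$ (which should give back a trivial conjugation), and against part (b) in the case $\sigma = \mathrm{id}$ (which should give $t^3 = 0$), before committing to the final indexed formulas.
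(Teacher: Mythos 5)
Your overall route is the same as the paper's: prove (a) by direct evaluation on the points $a^i_p$ using the defining formula \eqref{71.2}, deduce (b) from (a), and obtain (c) by applying (b) and then (a) twice. Parts (a) and (b) are correct as written and coincide with the paper's computation.

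In part (c), however, your application of (a) to the product $C^{\tau^{-1},m}_{s_*}\cdot C^{\sigma\tau,m}_{u_*}$ is misindexed. In formula \eqref{eq sec a'} the subscript of the \emph{right} factor's parameter vector is evaluated at the inverse of the \emph{left} factor's permutation: for $C^{\alpha,m}_{a_*}\cdot C^{\beta,m}_{b_*}=C^{\alpha\beta,m}_{c_*}$ one has $c_i=|b_{\alpha^{-1}(i)}+a_i|_m$. Here $\alpha=\tau^{-1}$ and $b=u$, so the correct conclusion is $t^3_i=|u_{\tau(i)}+s_i|_m$, not $t^3_i=|u_{(\sigma\tau)^{-1}(i)}+s_i|_m$ as you wrote. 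With the correct index the claim follows at once: $u_{\tau(i)}=|t^2_{\sigma^{-1}(\tau(i))}+t^1_{\tau(i)}|_m$ and $s_i=|-t^2_{\tau(i)}|_m$ give $t^3_i=|t^2_{\sigma^{-1}(\tau(i))}+t^1_{\tau(i)}-t^2_{\tau(i)}|_m$, which is exactly \eqref{103.1}, with no relabelling needed (this is precisely the paper's derivation). Your proposed fix-up of the wrong intermediate expression, ``replace $i$ by $\tau(i)$ throughout,'' does not repair it: applied consistently it turns the left-hand side into $t^3_{\tau(i)}$ and the term $s_i=|-t^2_{\tau(i)}|_m$ into $|-t^2_{\tau^2(i)}|_m$, and the resulting expression is not \eqref{103.1} for general $\sigma,\tau$. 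So the final formula you state is the right one, but the derivation of (c) as written fails at this single bookkeeping step; replacing $(\sigma\tau)^{-1}$ by $\tau$ in the subscript of $u$ repairs it and makes your argument identical to the paper's.
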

\begin{proof}
$\newline$

In this proof we denote, for each $i\in \mathbb N$, $i\geq 1$

$$ A_i = (1 + m(i-1),\ldots, mi) $$

(i.e., $A_i=A_i^m$ from notation~\ref{notation A} but we suppress the superscript).

\textbf{Proof of (a) : }






By \eqref{71.2} we have for each $i\in \{j,\ldots,j+r-1\}$ and $q\in \bigtriangledown_m$

$$ C^{\sigma,m}_{t^1_j,\ldots,t^1_{j+r-1}} \cdot C^{\tau,m}_{t^2_j,\ldots,t^2_{j+r-1}} (a^i_q) = 
C^{\sigma,m}_{t^1_j,\ldots,t^1_{j+r-1}} ( C^{\tau,m}_{t^2_j,\ldots,t^2_{j+r-1}} (a^i_q)) = $$

$$ = C^{\sigma,m}_{t^1_j,\ldots,t^1_{j+r-1}} (a^{\tau(i)}_{|q + t^2_{\tau(i)}|_m}) =
 a^{\sigma(\tau(i))}_{|q + t^2_{\tau(i)} + t^1_{\sigma(\tau(i))} |_m} = 
a^{\sigma\cdot \tau (i)}_{|q + t^2_{\tau(i)} + t^1_{\sigma\cdot \tau (i)} |_m} $$

which means, by \eqref{71.2}, that

$$ C^{\sigma,m}_{t^1_j,\ldots,t^1_{j+r-1}} \cdot C^{\tau,m}_{t^2_j,\ldots,t^2_{j+r-1}}  = 
C^{\sigma \cdot \tau,m}_{t^3_j,\ldots,t^3_{j+r-1}} $$ 

where

$$ t^3_{\sigma\cdot \tau(i)} = |t^2_{\tau(i)} + t^1_{\sigma\cdot \tau (i)} |_m$$

or equivalently

$$ t^3_i = |t^2_{\sigma^{-1}(i)} + t^1_i |_m$$

\textbf{Proof of (b) : }

For each $i\in \{j,\ldots,j+r-1\}$ set $t^2_i = |-t^1_{\sigma(i)}|_m $.

According to part $(a)$ we have

$$ C^{\sigma,m}_{t^1_j,\ldots,t^1_{j+r-1}} \cdot C^{\sigma^{-1},m}_{t^2_j,\ldots,t^2_{j+r-1}} = C^{id,m}_{t^3_j,\ldots,t^3_{j+r-1}} $$

where

$$ t^3_i = | t^2_{\sigma^{-1}(i)} + t^1_i |_m = | |-t^1_i|_m + t^1_i |_m = 0 $$

Hence 

$$ C^{id,m}_{t^3_j,\ldots,t^3_{j+r-1}} = C^{id,m}_{0,\ldots,0} = id $$

and so

$$ (C^{\sigma,m}_{t^1_j,\ldots,t^1_{j+r-1}})^{-1} = C^{\sigma^{-1},m}_{t^2_j,\ldots,t^2_{j+r-1}} $$

where $t^2_i = |-t^1_{\sigma(i)}|_m $.

\textbf{Proof of (c) : }

By part $(b)$ we have that

$$ (C^{\tau,m}_{t^2_j,\ldots,t^2_{j+r-1}})^{-1} = C^{\tau^{-1},m}_{|-t^2_{\tau(j)}|_m,\ldots,|-t^2_{\tau(j+r-1)}|_m} $$ 

and by part $(a)$ and \eqref{eq sec a'} we have that

$$ C^{\sigma,m}_{t^1_j,\ldots,t^1_{j+r-1}} \cdot C^{\tau,m}_{t^2_j,\ldots,t^2_{j+r-1}} = 
C^{\sigma \cdot \tau,m}_{|t^2_{\sigma^{-1}(j)} + t^1_j |_m,\ldots,|t^2_{\sigma^{-1}(j+r-1)} + t^1_{j+r-1} |_m}. $$

Using part $(a)$ and \eqref{eq sec a'} again we now have

$$ (C^{\tau,m}_{t^2_j,\ldots,t^2_{j+r-1}})^{-1} \cdot C^{\sigma,m}_{t^1_j,\ldots,t^1_{j+r-1}} \cdot 
C^{\tau,m}_{t^2_j,\ldots,t^2_{j+r-1}} = $$

$$ = C^{\tau^{-1},m}_{|-t^2_{\tau(j)}|_m,\ldots,|-t^2_{\tau(j+r-1)}|_m} \cdot 
C^{\sigma \cdot \tau,m}_{|t^2_{\sigma^{-1}(j)} + t^1_j |_m,\ldots,|t^2_{\sigma^{-1}(j+r-1)} + t^1_{j+r-1} |_m} = $$

$$ = C^{\tau^{-1} \cdot \sigma \cdot \tau,m}_{t^3_j,\ldots,t^3_{j+r-1}} $$

where

$$ t^3_i = | t^2_{\sigma^{-1}( \tau(i))} + t^1_{\tau(i)} - t^2_{\tau(i)}  |_m  $$

for each $i\in \{j,\ldots,j+r-1\}$.

\end{proof}

\begin{proposition}\label{prop conj model}
Let $k,m\in \mathbb N$, $m\geq 2$.
Let $l$ be a divisor of $m$, $1\leq l < m$, 
and for each $i=1,\ldots,k-1$ let $\{ t^i_{(i-1)l + 1},\ldots,t^i_{(i+1)l } \}\in \bigtriangledown_{\frac{m}{l}}$.

Suppose that $\phi_{m,l,\{ t^i_{(i-1)l + 1},\ldots,t^i_{(i+1)l } \}_{i=1,\ldots,k-1} }\colon B_k\to S_{mk}$, defined by

\begin{equation}\label{eq model phi 1}
\phi_{m,l,\{ t^i_{(i-1)l + 1},\ldots,t^i_{(i+1)l } \}_{i=1,\ldots,k-1} }(\sigma_i) = A^{\frac{m}{l}}_1 \cdots A^{\frac{m}{l}}_{(i-1)l} 
\cdot C^{\psi_l(\sigma_i),\frac{m}{l}}_{t^i_{l(i-1)+1},\ldots,t^i_{l(i+1)} } \cdot A^{\frac{m}{l}}_{(i+1)l+1} \cdots A^{\frac{m}{l}}_{kl} 
\end{equation}

for $i=1,\ldots,k-1$, is a homomorphism. 

Then $\phi_{m,l,\{ t^i_{(i-1)l + 1},\ldots,t^i_{(i+1)l } \}_{i=1,\ldots,k-1} } \sim 
\phi_{m,l,\{ t'^i_{(i-1)l + 1},\ldots,t'^i_{(i+1)l } \}_{i=1,\ldots,k-1} }$ where
for each $i=1,\ldots,k-1$ and $q$, $(i-1)l + 1\leq q \leq (i+1)l$

\begin{equation}\label{48.1}
t'^i_q = \begin{cases}
0 & q\neq (i-1)l + 1 \\
p & q = (i-1)l + 1
\end{cases} 
\end{equation}

for some $p\in \bigtriangledown_{\frac{m}{l}}$.

\end{proposition}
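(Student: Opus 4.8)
The idea is to conjugate $\phi := \phi_{m,l,\{t^i_*\}}$ by a suitable product of powers of the cycles $A^{m/l}_j$ and normalize one generator at a time. Write $n := m/l$ (so $n\geq 2$, as $l<m$), $A_j := A^{m/l}_j$, and $G_s := \{(s-1)l+1,\ldots,sl\}$, so that for each $i$ the nontrivial part of $\phi(\sigma_i)$ lives on the $2l$ blocks of the window $G_i\cup G_{i+1}$. For $\vec c = (c_1,\ldots,c_{kl})\in(\bigtriangledown_n)^{kl}$ put $\zeta_{\vec c} := \prod_{j=1}^{kl} A_j^{c_j}\in S_{mk}$; it acts blockwise by $\zeta_{\vec c}(a^j_q)=a^j_{|q+c_j|_n}$ and fixes every $A_j$. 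Its restriction to the blocks of the $i$-th window is the permutation $C^{id,n}_{c_{(i-1)l+1},\ldots,c_{(i+1)l}}$ of Notation~\ref{notation C}, so Lemma~\ref{lemma C^C}(c) (with $\tau$ the identity permutation of the index set) gives
$$ \bigl(C^{\psi_l(\sigma_i),n}_{t^i_*}\bigr)^{\zeta_{\vec c}} = C^{\psi_l(\sigma_i),n}_{t'^i_*},\qquad t'^i_q = \bigl|\,c_{\psi_l(\sigma_i)^{-1}(q)} + t^i_q - c_q\,\bigr|_n. $$
Hence $\phi^{\zeta_{\vec c}}$ is again a model homomorphism $\phi_{m,l,\{t'^i_*\}}$, and being a conjugate of a homomorphism its parameters still satisfy \eqref{eq condition t}. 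It therefore suffices to produce $\vec c$ with $t'^i_q = 0$ for all $q\neq (i-1)l+1$ and with $t'^i_{(i-1)l+1}$ independent of $i$.

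I would build $\vec c$ in $k-1$ stages. At stage $i$ I prescribe only the entries $c_q$ with $q\in G_{i+1}$ (at stage $1$, also those with $q\in G_1$); since no window of a $\sigma_{i'}$ with $i'<i$ meets $G_{i+1}$, stage $i$ leaves $\phi(\sigma_{i'})$ unchanged for $i'<i$. \emph{Stage $1$:} the equations $t'^1_q=0$ $(q=2,\ldots,2l)$ read $c_q - c_{\psi_l(\sigma_1)^{-1}(q)}\equiv t^1_q\pmod n$; the associated difference graph on $G_1\cup G_2$ is the $2l$-cycle $\psi_l(\sigma_1)$ with the single edge incident to block $1$ deleted, i.e.\ a Hamiltonian path, hence a tree, so the system is consistent and we solve it taking $c_1=0$. \emph{Stage $i\geq 2$:} by induction $\sigma_{i-1}$ is already in normal form, so for $j=0,\ldots,l-2$ the instances of \eqref{eq condition t} for the pair $(\sigma_{i-1},\sigma_i)$ collapse to
$$ t^i_{(i-1)l+1+a}\equiv -\,t^i_{il+a}\pmod n,\qquad a=1,\ldots,l-1, $$
that is, the parameters of $\sigma_i$ on $G_i$ other than the leftmost are the negatives of those on $G_{i+1}$ other than the rightmost. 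Setting $c_q := |t^i_q|_n$ for $q\in G_{i+1}$ then annihilates $t'^i_q$ for every $q\in G_{i+1}$ directly, and, by the displayed relation, also for every $q\in G_i\setminus\{(i-1)l+1\}$, since $\psi_l(\sigma_i)^{-1}$ carries $(i-1)l+1+a$ to $il+a$.

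There remains $t'^i_{(i-1)l+1} = |\,t^i_{(i+1)l} + t^i_{(i-1)l+1}\,|_n =: p_i$ (using $\psi_l(\sigma_i)^{-1}((i-1)l+1)=(i+1)l$). The $j=l-1$ instance of \eqref{eq condition t} for $(\sigma_{i-1},\sigma_i)$, with $\sigma_{i-1}$ normalized, reads $p_{i-1}\equiv t^i_{(i+1)l}+t^i_{(i-1)l+1}\pmod n$, whence $p_i=p_{i-1}$; so $p_1=\cdots=p_{k-1}=:p\in\bigtriangledown_{m/l}$. The resulting $\phi^{\zeta_{\vec c}}$ is then precisely $\phi_{m,l,\{t'^i_*\}}$ with $t'^i_q$ as in \eqref{48.1}, and since $\phi\sim\phi^{\zeta_{\vec c}}$ the proposition follows.

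The main obstacle is the bookkeeping at stage $i$: from the explicit $2l$-cycle form of $\psi_l(\sigma_i)$ one must track exactly which block $\psi_l(\sigma_i)^{-1}$ sends a given block to, and check that the single choice $c_q=|t^i_q|_n$ on $G_{i+1}$ is forced and simultaneously kills both the $G_{i+1}$-parameters and, via the homomorphism relation, the $G_i$-parameters of $\sigma_i$. What makes this possible is that \eqref{eq condition t} furnishes, per junction, exactly the $l-1$ relations needed for $l$ shift variables to annihilate $2l-1$ parameters, together with one further relation (the $j=l-1$ case) that forces the surviving leftmost parameters to coincide.
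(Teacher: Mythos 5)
Your plan is correct, and its skeleton matches the paper's proof: conjugate by an explicitly constructed element, solve for the conjugating data stage by stage along the $2l$-cycle $\psi_l(\sigma_i)$, and use the compatibility condition \eqref{eq condition t} (via Lemma~\ref{lemma prep phi} and Lemma~\ref{lemma condition C_i}) to handle the parameters you cannot kill outright. The genuine difference is the conjugating family. The paper conjugates by $D=\prod_i C^{A^l_i,\frac{m}{l}}_{s_{(i-1)l+1},\ldots,s_{il}}$, which for $l\geq 2$ permutes the $\frac{m}{l}$-cycles inside each group of $l$ blocks as well as shifting them, so its reparametrization law \eqref{eq t exp} involves $\psi_l(\sigma_i)$ and $\psi_l^2(\sigma_i)$; you conjugate by the abelian family $\zeta_{\vec c}=\prod_j (A^{\frac{m}{l}}_j)^{c_j}$ of pure in-block shifts, so Lemma~\ref{lemma C^C}~$(c)$ with $\tau=\mathrm{id}$ gives the simpler law $t'^i_q=\bigl|c_{\psi_l(\sigma_i)^{-1}(q)}+t^i_q-c_q\bigr|_{\frac{m}{l}}$, and the invariance of each window sum makes the common value $p$ visible from the start. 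A second difference is where \eqref{eq condition t} enters: the paper fixes all shifts first (its Steps 1--2) and only afterwards (Step 3) invokes \eqref{eq condition t} for the conjugated homomorphism to conclude that the left-half parameters vanish and the surviving ones all equal $p$; you invoke it inside the induction, for the partially conjugated homomorphism, to show that the single forced choice of $c$ on $G_{i+1}$ kills both halves of window $i$ and propagates $p_{i}=p_{i-1}$. Your index computations are right ($\psi_l(\sigma_i)^{-1}((i-1)l+1+a)=il+a$ for $1\leq a\leq l-1$ and $=(i+1)l$ for $a=0$, which is exactly what makes the collapsed relations do the work). One point you must make explicit in a full write-up: in Stage $i\geq 2$ the symbols $t^i_q$ have to denote the parameters of the \emph{partially} conjugated homomorphism, not the original ones (otherwise ``set $c_q:=|t^i_q|_{\frac{m}{l}}$'' does not annihilate $t'^i_q$ on $G_{i+1}$, since the term $c_{\psi_l(\sigma_i)^{-1}(q)}$ with $\psi_l(\sigma_i)^{-1}(q)\in G_i$ is already nonzero), and you should record that each partial conjugate is again of the model form \eqref{eq model phi 1} -- the $A_j$'s are fixed by $\zeta_{\vec c}$ -- hence is a homomorphism whose parameters satisfy \eqref{eq condition t}; this is precisely what licenses the ``collapse'' used at each stage. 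With that clarification the argument is complete and yields \eqref{48.1}.
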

\begin{proof}

For the purposes of the proof we denote $\phi_{m,l,\{ t^i_{(i-1)l + 1},\ldots,t^i_{(i+1)l } \}_{i=1,\ldots,k-1} }$ as $\phi$,
$\phi_{m,l,\{ t'^i_{(i-1)l + 1},\ldots,t'^i_{(i+1)l } \}_{i=1,\ldots,k-1} }$ as $\phi'$,
and $\phi_{m,l,\{ t^i_{(i-1)l + 1},\ldots,t^i_{(i+1)l } \}_{i=1,\ldots,k-1} }(\sigma_i)$ as $\widehat \sigma_i$.

Furthermore, we use the notation~\ref{notation A}, where for each $p\in \mathbb N$, $p\geq 2$, and
each $i\geq 1$ we denote

$$ A^p_i = (1 + p(i-1),\ldots, pi) $$

We shall denote the $d$-th power of $A^p_i$ as $(A^p_i)^d$ for any $d\in \mathbb Z$. 

Consider the following element in $S_{mk}$~:

$$ D = \prod_{ 1\leq i \leq k} C^{A^l_i,\frac{m}{l}}_{s_{(i-1)l+1},\ldots,s_{il}}  $$

where the $s_i$-s are numbers in $\bigtriangledown_{\frac{m}{l}}$, yet to be determined, 
and in each $C^{A_i,\frac{m}{l}}_{s_{(i-1)l+1},\ldots,s_{il}}$ we consider $A_i$ as
a permutation in $\mathbf{S}(\{(i-1)l+1,\ldots,il \})\cong S_l$.

Let us now show that the conjugation of $\phi$ by $D$ is a model homomorphism (see Definition~\ref{Def B_k S_mk}).

We have that

$$ (\widehat \sigma_i)^D = (C^{\psi_l(\sigma_i),\frac{m}{l}}_{t^i_{l(i-1)+1},\ldots,t^i_{l(i+1)} })^D \cdot 
\prod_{j=1,\ldots,(i-1)l,(i+1)l+1,\ldots,kl} (A^{\frac{m}{l}}_j)^D $$

First note that $C^{A^l_i,\frac{m}{l}}_{s_{(i-1)l+1},\ldots,s_{il}}$ permutes the cycles
$A^{\frac{m}{l}}_{(i-1)l+1},\ldots,A^{\frac{m}{l}}_{il}$ among themselves (see notation~\ref{notation C}), i.e.,

$$ (A^{\frac{m}{l}}_{(i-1)l+1} \cdots A^{\frac{m}{l}}_{il})^{C^{A^l_i,\frac{m}{l}}_{s_{(i-1)l+1},\ldots,s_{il}}} = 
A^{\frac{m}{l}}_{(i-1)l+1}\cdots A^{\frac{m}{l}}_{il}. $$

This means
that for each $i=1,\ldots,k-1$ we have

\begin{equation}\label{eq A^C=A}
\prod_{j=1,\ldots,(i-1)l,(i+1)l+1,\ldots,kl} (A^{\frac{m}{l}}_j)^D = \prod_{j=1,\ldots,(i-1)l,(i+1)l+1,\ldots,kl} A^{\frac{m}{l}}_j
\end{equation}

Now note that since $(\psi_l(\sigma_i))^2 = A^l_i\cdot A^l_{i+1}$ for each $i=1,\ldots,k-1$, 
$C^{A^l_j,\frac{m}{l}}_{s_{(j-1)l+1},\ldots,s_{jl}}$ is disjoint from
$ C^{\psi_l(\sigma_i),\frac{m}{l}}_{t^i_{l(i-1)+1},\ldots,t^i_{l(i+1)} } $ except for $j=i,i+1$. Hence

\begin{equation}\label{eq C^D=C}
(C^{\psi_l(\sigma_i),\frac{m}{l}}_{t^i_{l(i-1)+1},\ldots,t^i_{l(i+1)} })^D =
(C^{\psi_l(\sigma_i),\frac{m}{l}}_{t^i_{l(i-1)+1},\ldots,t^i_{l(i+1)} })^{C^{A^l_i \cdot A^l_{i+1},\frac{m}{l}}_{s_{(i-1)l+1},\ldots,s_{(i+1)l}}} 
\end{equation}

(see notation~\ref{notation C} for the expression $C^{A^l_i \cdot A^l_{i+1},\frac{m}{l}}_{s_{(i-1)l+1},\ldots,s_{(i+1)l}}$).

Let us now compute the expression on the right hand side of \eqref{eq C^D=C}.

According to part~$(c)$ of Lemma~\ref{lemma C^C} and using the fact that 
$(\psi_l(\sigma_i))^2 = A^l_i\cdot A^l_{i+1}$ for each $i=1,\ldots,k-1$, we have that 

\begin{equation}\label{eq C^D=C'}
(C^{\psi_l(\sigma_i),\frac{m}{l}}_{t^i_{l(i-1)+1},\ldots,t^i_{l(i+1)} })^{C^{A^l_i \cdot A^l_{i+1},\frac{m}{l}}_{s_{(i-1)l+1},\ldots,s_{(i+1)l}}}=
C^{\psi_l(\sigma_i),\frac{m}{l}}_{t'^i_{l(i-1)+1},\ldots,t'^i_{l(i+1)} }
\end{equation}

where

\begin{equation}\label{eq t exp}
t'^i_j = | s_{\psi_l(\sigma_i)(j)} + t^i_{\psi_l^2(\sigma_i)(j)} - s_{\psi_l^2(\sigma_i)(j)} |_{\frac{m}{l}}  
\end{equation}

for each $j=l(i-1)+1,\ldots,l(i+1)$.

The equations \eqref{eq A^C=A}, \eqref{eq C^D=C} and \eqref{eq C^D=C'} prove that $\phi^D$ is indeed a model homomorphism.

We now set the values of the $s_q\in \bigtriangledown_{\frac{m}{l}}$ so that the $t'^i_q$ in \eqref{eq t exp}
will satisfy the equations \eqref{48.1} in our claim. 

We continue the proof in a series of steps as follows.

\textbf{Step 1 : }
At this step we set the values of $s_1,\ldots,s_{2l}$.

First set $s_{\psi_l^2(\sigma_1)(1)}=0$. Next set inductively the values of 
$s_{\psi_l^3(\sigma_1)(1)},\ldots,s_{\psi_l^{2l}(\sigma_1)(1)},s_{\psi_l(\sigma_1)(1)}$ by the following set of equations
(recall that $\psi_l^{2l}(\sigma_i) = id$ for any $i$)

\begin{eqnarray}\label{eq ind s}
s_{\psi_l^3(\sigma_1)(1)} & = & |s_{\psi_l^2(\sigma_1)(1)} + t^1_{\psi_l^3(\sigma_1)(1)}|_{\frac{m}{l}}\nonumber \\
& \vdots \nonumber \\
s_{\psi_l^{j+2}(\sigma_1)(1)} & = & |s_{\psi_l^{j+1}(\sigma_1)(1)} + t^1_{\psi_l^{j+2}(\sigma_1)(1)}|_{\frac{m}{l}}\nonumber \\
s_{\psi_l^{j+3}(\sigma_1)(1)} & = & |s_{\psi_l^{j+2}(\sigma_1)(1)} + t^1_{\psi_l^{j+3}(\sigma_1)(1)}|_{\frac{m}{l}}\nonumber \\
& \vdots \nonumber \\ 
s_{\psi_l^{2l}(\sigma_1)(1)} & = & |s_{\psi_l^{2l-1}(\sigma_1)(1)} + t^1_{\psi_l^{2l}(\sigma_1)(1)}|_{\frac{m}{l}}\nonumber \\
s_{\psi_l(\sigma_1)(1)} & = & |s_{\psi_l^{2l}(\sigma_1)(1)} + t^1_{\psi_l(\sigma_1)(1)}|_{\frac{m}{l}}\nonumber \\
\end{eqnarray}

where the $j$-th equation in \eqref{eq ind s} sets the value of $s_{\psi_l^{j}(\sigma_1)(1)}$
using the value of $s_{\psi_l^{j-1}(\sigma_1)(1)}$ defined in the $j-1$-th equation.

We now have, according to \eqref{eq t exp}, that

$$ t'^1_{\psi^j_l(\sigma_1)(1)} = | s_{\psi_l^{j+1}(\sigma_1)(1)} + t^1_{\psi_l^{j+2}(\sigma_1)(1)} - s_{\psi^{j+2}_l(\sigma_1)(1)} |_{\frac{m}{l}} = $$

$$ = | s_{\psi_l^{j+1}(\sigma_1)(1)} + t^1_{\psi_l^{j+2}(\sigma_1)(1)} - ( s_{\psi_l^{j+1}(\sigma_1)(1)} + 
t^1_{\psi_l^{j+2}(\sigma_1)(1)} ) |_{\frac{m}{l}} = 0 $$

for each $j=1,\ldots,2l-1$ and furthermore

$$ t'^1_1 = | s_{\psi_l(\sigma_1)(1)} + t^1_{\psi_l^2(\sigma_1)(1)} - s_{\psi_l^2(\sigma_1)(1)} |_{\frac{m}{l}} = 
|s_{\psi_l(\sigma_1)(1)} + t^1_{\psi_l^2(\sigma_1)(1)} |_{\frac{m}{l}}$$

Let us denote

$$ p = |s_{\psi_l(\sigma_1)(1)} + t^1_{\psi_l^2(\sigma_1)(1)} |_{\frac{m}{l}} \in \bigtriangledown_{\frac{m}{l}} $$

Hence we now have that the $t'^1_j$ satisfy \eqref{48.1} for each $j=1,\ldots,2l$.\hfill $\bullet$

\textbf{Step 2 : }
At this step we set the rest of the values~: $s_{2l+1},\ldots,s_{kl}$.
We do this in $k-2$ steps where in step $q$ we set the values of $s_{ql+1},\ldots,s_{(q+1)l}$
where $q=2,\ldots,k-1$.

For each $q=2,\ldots,k-1$ we set

$$ s_j = |s_{\psi_l^{-1}(\sigma_q)(j)} + t^q_j|_{\frac{m}{l}} \qquad \text{for} \quad j=ql+1,\ldots,(q+1)l $$

which is equivalent to

\begin{equation}\label{101.1}
s_{\psi_l^2(\sigma_q)(j)}  = |s_{\psi_l(\sigma_q)(j)} + t^q_{\psi_l^2(\sigma_q)(j)}|_{\frac{m}{l}} \qquad \text{for} \quad j=ql+1,\ldots,(q+1)l 
\end{equation}

Now according to \eqref{eq t exp} and \eqref{101.1}, we have that

$$ t'^q_{j} = | s_{\psi_l(\sigma_q)(j)} + t^q_{\psi_l^2(\sigma_q)(j)} - s_{\psi_l^2(\sigma_q)(j)} |_{\frac{m}{l}} =  $$

$$ = | s_{\psi_l(\sigma_q)(j)} + t^q_{\psi_l^2(\sigma_q)(j)} - ( s_{\psi_l(\sigma_q)(j)} + t^q_{\psi_l^2(\sigma_q)(j)}) |_{\frac{m}{l}} = 0 $$

for each $q=2,\ldots,k-1$ and $j=ql+1,\ldots,(q+1)l$, in accordance with \eqref{48.1}. \hfill $\bullet$

\textbf{Step 3 : }
It is now left to show that the values of $t'^q_j$ satisfy \eqref{48.1} for $q=2,\ldots,k-1$
and $j=(q-1)l+1,\ldots,ql$.

Since $\phi^D = \phi'$ is a model homomorphism, as we have shown above, we have by 
Lemma~\ref{lemma prep phi} and Lemma~\ref{lemma condition C_i} that

\begin{equation}\label{eq condition t 1}
\forall j\in \bigtriangledown_l \quad t'^i_{(i-1)l + 1 + |1+j|_l} + t'^i_{il + 1 + |1+ j|_l} \equiv
t'^{i+1}_{(i+1)l + 1 + j} + t'^{i+1}_{il + 1 + |1+ j|_l} (\text{mod} \ \frac{m}{l})
\end{equation}

for each $i=1,\ldots,k-2$. But since, by step $1$ and step $2$, we have that
$t'^i_{il + 1 + |1+ j|_l}=0$ for each $i=1,\ldots,k-1$ and $j\in \bigtriangledown_l$,
and since each summand in \eqref{eq condition t 1} is in $\bigtriangledown_{\frac{m}{l}}$, we have
that \eqref{eq condition t 1} implies 

\begin{equation}\label{eq condition t 2}
\forall j\in \bigtriangledown_l \quad t'^i_{(i-1)l + 1 + |1+j|_l} = t'^{i+1}_{il + 1 + |1+ j|_l} 
\end{equation}

Now since, by step $1$, $t'^1_1=p$ and $t'^1_2 = \cdots = t'^1_l = 0$, \eqref{eq condition t 2} implies
that for each $i=2,\ldots,k-1$ we have

$$ t'^i_{(i-1)l + 1} = p \qquad \text{and} \qquad t'^i_{(i-1)l+2} = \cdots = t'^i_{il} = 0 $$

\hfill $\bullet$

Summing up, we have shown that $\phi' = \phi^D$ is a model homomorphism and
in steps $1$, $2$ and $3$ we have shown that the values of $t'^i_q$ in $\phi'$ satisfy
\eqref{48.1} which completes the proof.

\end{proof}

\begin{proposition}\label{prop m l cond}
Let $k,m\in \mathbb N$, $m\geq 2$.
Let $l$ be a divisor of $m$, $1\leq l < m$.

For each $p\in \bigtriangledown_{\frac{m}{l}}$ let $l_p$ be the minimal number 
between $1$ and $\frac{m}{l}$ such that 

\begin{equation}\label{102.2}
l_p\cdot p \equiv 0 \ (mod \ \frac{m}{l}). 
\end{equation}

Suppose that 

\begin{equation}\label{102.1}
2l\cdot l_p \neq \frac{m}{l} \qquad \text{for all} \quad p\in \bigtriangledown_{\frac{m}{l}}
\end{equation}

(\eqref{102.1} holds for example when $\frac{m}{l}$ is odd).
Then there are exactly $\frac{m}{l} + 1$ pairwise non-conjugate model homomorphisms $B_k\to S_{mk}$
in the set of model homomorphisms defined in definition~\ref{Def B_k S_mk}.

Explicitly, these homomorphisms are

$$ \psi_m(\sigma_i) =  C^{(i,i+1),m}_{1,0} $$

and

\begin{equation}\label{eq model phi'}
\phi_{m,l,\{ p,0,\ldots,0  \}_{i=1,\ldots,k-1} }(\sigma_i) = A^{\frac{m}{l}}_1 \cdots A^{\frac{m}{l}}_{(i-1)l} 
\cdot C^{\psi_l(\sigma_i),\frac{m}{l}}_{p,0,\ldots,0 } \cdot A^{\frac{m}{l}}_{(i+1)l+1} \cdots A^{\frac{m}{l}}_{kl} 
\end{equation}

for each $p=0,\ldots, \frac{m}{l}-1$.
\end{proposition}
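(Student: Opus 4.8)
The plan is to reduce to a short list of normal forms using Proposition~\ref{prop conj model} and then separate them by conjugacy invariants. By Proposition~\ref{prop conj model}, every model homomorphism of the form $\phi_{m,l,\{t^i_\ast\}}$ is conjugate to one of the form $\phi_{m,l,\{p,0,\ldots,0\}_{i=1,\ldots,k-1}}$ for some $p\in\bigtriangledown_{m/l}$; and, as noted at the beginning of this subsection (cf.\ Definition~\ref{Def B_k S_mk} and Remark~\ref{rem C m=1}, which exhibits $\psi_m$ as the case $m=l$ of $\phi$), $\psi_m$ is the unique model homomorphism $B_k\to S_{mk}$ that is good of type~$1$. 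Hence every model homomorphism in the sub-family $\{\psi_m\}\cup\{\phi_{m,l,\{t^i_\ast\}}\}$ is conjugate to one of the $m/l+1$ homomorphisms listed in the statement, and it remains only to show that these $m/l+1$ representatives are pairwise non-conjugate.

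Two of the three separations follow from invariants already available. First, $\psi_m$ is not conjugate to any $\phi_{m,l,\{p,0,\ldots,0\}}$: since $\psi_m(\sigma_i)$ is a single $2m$-cycle we have $supp(\psi_m)=2m$, whereas inspecting the definition of $\phi_{m,l,\{p,0,\ldots,0\}}(\sigma_1)$ shows its support is all of $\Delta_{mk}$, so $supp(\phi_{m,l,\{p,0,\ldots,0\}})=mk>2m$ because $k>6$; and $supp(\omega)$ is a conjugacy invariant by Remark~\ref{remark lemma intersect} (equivalently, $\psi_m$ is good of type~$1$ and the others of type~$2$, cf.\ Lemma~\ref{lemma good}). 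Second, for distinct $p,p'\in\bigtriangledown_{m/l}$ with $l_p\neq l_{p'}$ I would compare the cyclic types of $\widehat\sigma_1$: the permutation $\phi_{m,l,\{p,0,\ldots,0\}}(\sigma_1)$ is the disjoint product of the $(k-2)l$ cycles $A^{m/l}_{2l+1},\ldots,A^{m/l}_{kl}$ of length $m/l$ with the permutation $C^{\psi_l(\sigma_1),m/l}_{p,0,\ldots,0}$, and by Lemma~\ref{lemma C} (applied with its ``$m$'' and ``$r$'' taken to be $m/l$ and $2l$, since $\psi_l(\sigma_1)$ is a $2l$-cycle and the parameters sum to $p$) this last factor splits into $\tfrac{m/l}{l_p}$ cycles of length $2l\,l_p$. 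The hypothesis $2l\,l_p\neq m/l$ is exactly what guarantees that the length $2l\,l_p$ differs from the common length $m/l$ of the $A$-cycles, so it is visible in the cyclic type of $\widehat\sigma_1$ and $l_p$ is recovered from that type; hence $\phi_{m,l,\{p,0,\ldots,0\}}\not\sim\phi_{m,l,\{p',0,\ldots,0\}}$.

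The remaining — and main — difficulty is to separate $\phi_{m,l,\{p,0,\ldots,0\}}$ from $\phi_{m,l,\{p',0,\ldots,0\}}$ when $p\neq p'$ but $l_p=l_{p'}$ (for instance $p'=m/l-p$), since then all the $\widehat\sigma_i$ already have the same cyclic type. For this I would produce a conjugacy invariant that recovers $p$ itself by evaluating $\phi_{m,l,\{p,0,\ldots,0\}}$ on a family of auxiliary braid words and comparing cyclic types. Using Lemma~\ref{lemma C^C}(a) together with the formula for powers of the $C$-permutations from the proof of Lemma~\ref{lemma C}, one checks that each even power $\widehat\sigma_i^{2l}$ lies in the abelian group generated by $A^{m/l}_1,\ldots,A^{m/l}_{kl}$ and equals $\prod_r (A^{m/l}_r)^{e_r}$, where $e_r=p$ on the $2l$ blocks carrying the $C$-part of $\widehat\sigma_i$ and $e_r=2l$ on the remaining blocks; more generally, braid words such as $\sigma_1\sigma_3\sigma_5\cdots\sigma_{2j+1}$ have images whose cyclic types encode $\gcd(c+p,\,m/l)$ for a controllable integer $c$ (roughly $c=2j$). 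Since these cyclic types are conjugacy invariants and, under the standing hypothesis — in particular when $m/l$ is odd, so that $2$ is invertible modulo $m/l$ — the parameter $c$ can be made to run over all residues, the congruence class $p\bmod m/l$ is determined by the conjugacy class, so $\phi_{m,l,\{p,0,\ldots,0\}}\sim\phi_{m,l,\{p',0,\ldots,0\}}$ forces $p=p'$. I expect the careful choice of these auxiliary words and the verification that they pin down $p$ under the hypothesis $2l\,l_p\neq m/l$ (including the bookkeeping when $m/l$ is even) to be the technical heart of the argument; everything else is formal once Proposition~\ref{prop conj model} is in hand.
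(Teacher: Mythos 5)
Your reduction to normal forms via Proposition~\ref{prop conj model}, the separation of $\psi_m$ from the $\phi$'s by the support invariant, and the separation of parameters with $l_p\neq l_{p'}$ by the cycle type of $\widehat\sigma_1$ (where hypothesis \eqref{102.1} guarantees the $C$-part cycles are distinguishable from the $A$-cycles) are all sound, and the first of these steps is exactly how the paper begins. The gap is in the case you yourself identify as the heart of the matter: $p\neq p'$ with $l_p=l_{p'}$. The invariants you propose do not pin down $p$. For the word $\sigma_1\sigma_3\cdots\sigma_{2j+1}$ the restriction of its image to the blocks carrying $C_1$ is $C^{\psi_l(\sigma_1),\frac{m}{l}}_{p,0,\ldots,0}\cdot(A^{\frac{m}{l}}_1\cdots A^{\frac{m}{l}}_{2l})^j$, whose parameter sum is $p+2lj$, not $p+2j$; so the reachable shifts $c$ run only over multiples of $\gcd(2l,\frac{m}{l})$ modulo $\frac{m}{l}$, and oddness of $\frac{m}{l}$ does not make this gcd equal to $1$ because of the factor $l$. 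Concretely, take $m=27$, $l=3$, so $\frac{m}{l}=9$ and \eqref{102.1} holds (here $2l\cdot l_p\in\{6,18,54\}\neq 9$); for $p=1$ and $p'=2$ one has $l_p=l_{p'}=9$, while $p+6j$ and $p'+6j$ are coprime to $9$ for every $j$, so the cycle types of the images of all your words (and of the powers $\widehat\sigma_i^{2l}$, which equal $\prod_r (A^{\frac{m}{l}}_r)^{e_r}$ as you say) coincide for $\phi_{27,3,\{1,0,\ldots\}}$ and $\phi_{27,3,\{2,0,\ldots\}}$. Thus the proposed invariant cannot recover $p$ in general, no argument is given for why some other family of words would, and it is not even clear a priori that cycle types of images of braid words (a permutation-character-type invariant) suffice to separate these conjugacy classes at all.

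The paper handles this case by analyzing a hypothetical conjugator $D$ directly rather than through invariants: writing $(\widehat\sigma_i)^D=\phi_2(\sigma_i)$ for $i=1,3$, hypothesis \eqref{102.1} forces (by comparing cycle lengths) $D$ to map the product $A^{\frac{m}{l}}_1\cdots A^{\frac{m}{l}}_{2l}$ to itself, hence by Lemma~\ref{lemma C} its restriction there has the form $C^{\tau,\frac{m}{l}}_{s_1,\ldots,s_{2l}}$ with $\tau$ necessarily a power of $\psi_l(\sigma_1)$; the conjugation condition then becomes a linear system $(P-I)\,s\equiv p_2\vec e_1-p_1\vec e_{\ast}\ (\mathrm{mod}\ \frac{m}{l})$, and since every column of $P-I$ has coordinate sum zero, solvability forces $p_1=p_2$. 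To complete your plan you would need either genuinely finer invariants or this kind of direct analysis of the conjugator; as written, the key step fails.
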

\begin{proof}

According to proposition~\ref{prop conj model}, every model homomorphism which is good of type $2$ (see definition~\ref{def good})
is conjugate to a homomorphism of the form \eqref{eq model phi'} for some $p\in \bigtriangledown_{\frac{m}{l}}$.

Let $\phi_1$ and $\phi_2$ be two homomorphisms of the form \eqref{eq model phi'} with $p_1$ and $p_2$ instead
of $p$ respectively. Suppose that there is an element $D\in S_{mk}$ such that $\phi_1^D = \phi_2$.
We will prove that necessarily $p_1=p_2$ which will prove our claim.

Let us denote $\phi_1(\sigma_i)$ as $\widehat \sigma_i$.

Let us write the equations $(\widehat \sigma_i)^D = \phi_2(\sigma_i)$ for $i=1,3$ as follows~:

\begin{equation}\label{eq sig1=phi}
(C^{\psi_l(\sigma_1),\frac{m}{l}}_{p_1,0,\ldots,0 })^D \cdot 
(\prod_{j=2l+1,\ldots,kl} (A^{\frac{m}{l}}_j))^D = C^{\psi_l(\sigma_1),\frac{m}{l}}_{p_2,0,\ldots,0 } \cdot 
\prod_{j=2l+1,\ldots,kl} (A^{\frac{m}{l}}_j) 
\end{equation}

\begin{equation}\label{eq sig3=phi}
(C^{\psi_l(\sigma_3),\frac{m}{l}}_{p_1,0,\ldots,0 })^D \cdot 
(\prod_{j=1,\ldots,2l,4l+1,\ldots,kl} (A^{\frac{m}{l}}_j))^D = C^{\psi_l(\sigma_3),\frac{m}{l}}_{p_2,0,\ldots,0 } \cdot 
\prod_{j=1,\ldots,2l,4l+1,\ldots,kl} (A^{\frac{m}{l}}_j) 
\end{equation}

According to lemma~\ref{lemma C}, the cyclic decomposition of each $C^{\psi_l(\sigma_i),\frac{m}{l}}_{p_1,0,\ldots,0 }$
consists of exactly $\frac{m}{l}\cdot \frac{1}{l_{p_1}}\ $ $(2l\cdot l_{p_1})$-cycles where $l_p$ is defined in \eqref{102.2}.
Since, according to our assumption \eqref{102.1}, $2l\cdot l_{p_1}\neq \frac{m}{l}$ we have that
each side of the equations \eqref{eq sig1=phi} and \eqref{eq sig3=phi} 
consists of a product between disjoint permutations whose cyclic decompositions contain cycles of different lengths. 

In particular, \eqref{eq sig1=phi} implies 

\begin{equation}\label{200.1}
(\prod_{j=2l+1,\ldots,kl} (A^{\frac{m}{l}}_j))^D = \prod_{j=2l+1,\ldots,kl} (A^{\frac{m}{l}}_j) 
\end{equation}

and \eqref{eq sig3=phi} implies

\begin{equation}\label{200.2}
(\prod_{j=1,\ldots,2l,4l+1,\ldots,kl} (A^{\frac{m}{l}}_j))^D = \prod_{j=1,\ldots,2l,4l+1,\ldots,kl} (A^{\frac{m}{l}}_j). 
\end{equation}

Now \eqref{200.1} implies in particular that

$$ \{ (A^{\frac{m}{l}}_{4l+1})^D,\ldots, (A^{\frac{m}{l}}_{kl})^D \} \subseteq \{ A^{\frac{m}{l}}_{2l+1},\ldots,A^{\frac{m}{l}}_{kl} \} $$

and \eqref{200.2} implies

$$ \{ (A^{\frac{m}{l}}_{4l+1})^D,\ldots, (A^{\frac{m}{l}}_{kl})^D \} \subseteq \{A^{\frac{m}{l}}_1,\ldots,A^{\frac{m}{l}}_{2l} \} 
\cup \{ A^{\frac{m}{l}}_{4l+1},\ldots,A^{\frac{m}{l}}_{kl} \} $$

Hence

$$ \{ (A^{\frac{m}{l}}_{4l+1})^D,\ldots, (A^{\frac{m}{l}}_{kl})^D \} \subseteq \{ A^{\frac{m}{l}}_{2l+1},\ldots,A^{\frac{m}{l}}_{kl} \} 
\cap (\{A^{\frac{m}{l}}_1,\ldots,A^{\frac{m}{l}}_{2l} \} \cup \{ A^{\frac{m}{l}}_{4l+1},\ldots,A^{\frac{m}{l}}_{kl} \}) $$

or equivalently

$$ \{ (A^{\frac{m}{l}}_{4l+1})^D,\ldots, (A^{\frac{m}{l}}_{kl})^D \} \subseteq  \{ A^{\frac{m}{l}}_{4l+1},\ldots,A^{\frac{m}{l}}_{kl} \} $$

but since the $A_i$-s are disjoint this means that

\begin{equation}\label{200.8}
\{ (A^{\frac{m}{l}}_{4l+1})^D,\ldots, (A^{\frac{m}{l}}_{kl})^D \} =  \{ A^{\frac{m}{l}}_{4l+1},\ldots,A^{\frac{m}{l}}_{kl} \} 
\end{equation}

hence \eqref{200.2} and \eqref{200.8} imply that

$$ (\prod_{j=1,\ldots,2l} (A^{\frac{m}{l}}_j))^D = \prod_{j=1,\ldots,2l} (A^{\frac{m}{l}}_j). $$

Hence, by lemma~\ref{lemma C} we can write

\begin{equation}\label{eq supp A1}
D|_{supp(\bigcup_{j=1,\ldots,2l} A_j)} = C^{\tau,\frac{m}{l}}_{s_1,\ldots,s_{2l}} 
\end{equation}

where $\tau \in S_{2l}$.

Furthermore, by \eqref{eq sig1=phi}, we have that

$$ (C^{\psi_l(\sigma_1),\frac{m}{l}}_{p_1,0,\ldots,0 })^D = C^{\psi_l(\sigma_1),\frac{m}{l}}_{p_2,0,\ldots,0 } $$

which, by \eqref{eq supp A1}, can be rewritten as

\begin{equation}\label{eq C^D=C''}
(C^{\psi_l(\sigma_1),\frac{m}{l}}_{p_1,0,\ldots,0 })^{C^{\tau,\frac{m}{l}}_{s_1,\ldots,s_{2l}}} = 
C^{\psi_l(\sigma_1),\frac{m}{l}}_{p_2,0,\ldots,0 }
\end{equation}

According to part~$(c)$ of lemma~\ref{lemma C^C}, \eqref{eq C^D=C''} implies that

$$ (\psi_l(\sigma_1))^{\tau} = \psi_l(\sigma_1) $$

but since $\psi_l(\sigma_1)$ is a $2l$-cycle, then according to part~$(b)$ of lemma~\ref{Lemma commuting permutations} we have that

$$ \tau = \psi^q_l(\sigma_1) \qquad \text{for some} \ q, \ 0\leq q \leq 2l-1.  $$

Hence we can rewrite \eqref{eq C^D=C''} as

\begin{equation}\label{eq C^D=C'''}
(C^{\psi_l(\sigma_1),\frac{m}{l}}_{p_1,0,\ldots,0 })^{C^{\psi^q_l(\sigma_1),\frac{m}{l}}_{s_1,\ldots,s_{2l}}} = 
C^{\psi_l(\sigma_1),\frac{m}{l}}_{p_2,0,\ldots,0 }
\end{equation}

In order to prove our claim it is enough to show that equation \eqref{eq C^D=C'''} has a solution
$s_1,\ldots,s_{2l}\in \bigtriangledown_{\frac{m}{l}}$ only if $p_1=p_2$.

Suppose then that \eqref{eq C^D=C'''} has a solution $s_1,\ldots,s_{2l}\in \bigtriangledown_{\frac{m}{l}}$. 



Let us rewrite \eqref{103.1} from part $(c)$ of lemma~\ref{lemma C^C} as

\begin{equation}\label{104.1}
t^3_{\tau^{-1}(i)} = | t^2_{\sigma^{-1}(i)} + t^1_i - t^2_i  |_m 
\end{equation}

(the $m$ in \eqref{104.1} is $\frac{m}{l}$ in our case, $\tau$ is $\psi_l^q(\sigma_1)$ and $\sigma$ is $\psi_l(\sigma_1)$).

Then \eqref{eq C^D=C'''} and \eqref{104.1} imply the following set
of $2l$ equations~:

\begin{eqnarray}\label{eq ind s C^D 1}
p_2 & = & | s_{\psi^{-1}_l(\sigma_1)(1)} + 0 - s_1 |_{\frac{m}{l}}\nonumber \\
0 & = & | s_{\psi_l^{-1}(\sigma_1)(2)} + 0 - s_2 |_{\frac{m}{l}}\nonumber \\
& \vdots \nonumber \\ 
0 & = & | s_{\psi_l^{-1}(\sigma_1)(\psi^{-q}_l(\sigma_1)(1))} + p_1 - s_2 |_{\frac{m}{l}} \\
& \vdots\nonumber \\ 
0 & = & |s_{\psi_l^{-1}(\sigma_1)(2l)} + 0 - s_{2l} |_{\frac{m}{l}}\nonumber
\end{eqnarray}

Now the set of $2l$ equations \eqref{eq ind s C^D 1} can be rewritten in matrix form as follows

\begin{equation}\label{eq ind s matrix1}
(P - I)\left(\begin{matrix}
s_1 \\
\vdots\\
s_{2l}
\end{matrix}\right) \equiv p_2\cdot \vec e_1 - p_1\cdot \vec e_{\psi^{-q}_l(\sigma_1)(1)} \ (mod\  \frac{m}{l})
\end{equation}

where $I$ is the unit matrix of size $2l$, $\vec e_j$ is the unit vector of size $2l$ (it has $1$ in the $j$-th place and $0$-s elsewhere) and 
$P$ is the permutation matrix of size $2l$ corresponding to $\psi_l^{-1}(\sigma_1)$, i.e., 
its $j$-th row consists of zeros except for the $\psi^{-1}_l(\sigma_1)(j)$ column where the entry is $1$. 

Now note that since $\psi_l(\sigma_1)$ is a $2l$-cycle, its only power that has a fixed point is the identity, and so
$\psi_l^{-1}(\sigma_1)(i)\neq i$ for all $i=1,\ldots,2l$.
Hence the $j$-th column of the matrix $P-I$ in \eqref{eq ind s matrix1} has the entry $-1$ on the $j$-th row, it has
the entry $1$ on the $\psi^{-1}_l(\sigma_1)(j)$-th row (which is $\neq j$) and $0$ in the rest of the entries.
This means that the submodule of $(\mathbb Z_{\frac{m}{l}})^{2l}$ spanned
by the columns of the matrix $P-I$ consists of vectors whose entries sum
is $0$. Hence \eqref{eq ind s C^D 1} can have a solution only if $p_1=p_2$.


\end{proof}

\section{Homomorphisms $\omega\colon B_k \to S_{mk}$ with $supp(\omega)=2m$ }\label{section B_k S_mk supp=2m}

Recall the definition of model homomorphisms $\omega\colon B_k \to S_{mk}$ with $supp(\omega)=2m$~:

$$
\psi_m(\sigma_i) =  C^{(i,i+1),m}_{1,0}
$$

(see Definition~\ref{Def B_k S_mk} and Notations~\ref{notation C} and \ref{notation A}). Alternatively, we can write

$$ \psi_m(\sigma_i) = (a^i_0, a^{i+1}_0, a^i_1, a^{i+1}_1,\ldots,a^i_{m-1},a^{i+1}_{m-1}) $$

where 

$$ a^i_j = 1 + m(i-1) + j \in supp(A^m_i) \quad j\in \bigtriangledown_m. $$

Recall (see Definition~\ref{Def B_k S_mk}) that a homomorphism $B_k\to S_n$ is called standard if it is conjugate to a model homomorphism.

\begin{theorem}\label{proposition 2m}
Let $k,m\in \mathbb N$ be such that $k>6$ and $m\geq 2$.
Every good transitive non-cyclic homomorphism $\omega\colon B_k \to S_{mk}$ with 
$supp(\omega) = 2m$  is standard.

(The condition $supp(\omega) = 2m$ corresponds to case~$1$ of Definition~\ref{def good} according to 
Lemma~\ref{lemma good})

\end{theorem}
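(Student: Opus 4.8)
The goal is to show that any good transitive non-cyclic $\omega\colon B_k\to S_{mk}$ with $supp(\omega)=2m$ is conjugate to a model homomorphism from Definition~\ref{Def B_k S_mk}. The plan is to reduce the structure of $\omega$ to that of a ``smaller'' homomorphism via the (co)retraction/(co)reduction machinery of Section~\ref{sec defs}, and then recognize the pieces as the building blocks $C^{\psi_l(\sigma_i),m/l}_{\cdots}$ and spare cycles $A^{m/l}_j$ appearing in the formula \eqref{eq model phi}. First I would normalize $\widehat\sigma_1$: by transitivity and $supp(\omega)=2m$ together with Corollary~\ref{corollary lemma good} (which gives $intersect(\omega)=m$), the supports of consecutive generators overlap in exactly $m$ points and non-consecutive ones are disjoint, so after conjugation I may assume $supp(\widehat\sigma_i)=supp(A^m_i)\cup supp(A^m_{i+1})$ with $\widehat\sigma_i$ acting within these $2m$ points, exactly as in the model case. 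The cycle structure of $\widehat\sigma_1$ is constrained by Lemma~\ref{lemma supp bound}/Corollary~\ref{corollary lemma good}: each cycle $D$ of $\widehat\sigma_i$ meets $supp(\widehat\sigma_{i\pm1})$ in at least half its length, and since the two halves $supp(A^m_i)$, $supp(A^m_{i+1})$ each have size $m$ and the intersection pattern forces exactly half, every cycle of $\widehat\sigma_i$ alternates between the two halves — hence has even length, and the cycles have a common length $2l$ with $l\mid m$.

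The core of the argument is then the retraction step. Conjugation of $\widehat\sigma_1$ by $\widehat\sigma_3,\dots,\widehat\sigma_{k-1}$ permutes the $2l$-cycles of $\widehat\sigma_1$ (they all lie in the $2l$-component, and those generators commute with $\widehat\sigma_1$), giving the retraction $\Omega\colon B_{k-2}\to S_{m/l}$. I would argue, using Theorem~\ref{Thm F} applied to this smaller degree, that $\Omega$ is either cyclic or, up to conjugacy, the canonical epimorphism $\mu_{k-2}$ (degrees in between being impossible for $k-2>6$, and the non-canonical sporadic cases excluded by $k$ large); symmetrically for the coretraction on $\widehat\sigma_{k-1}$. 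If the (co)retraction is cyclic, Corollary~\ref{cor cyclic omega} forces the corresponding (co)reduction cyclic, and one shows this collapses $\omega$ back to the standard form $\psi_m$ (all $t^i_*$ effectively $0$ after normalization). In the non-cyclic case, $\Omega\sim\mu_{k-2}$ tells us precisely that each $\widehat\sigma_i$, restricted to its support, permutes the $2l$-cycles of $\widehat\sigma_1$ the way $\psi_l(\sigma_i)$ permutes the corresponding indices — i.e. $\widehat\sigma_i=A_1^{m/l}\cdots A^{m/l}_{(i-1)l}\cdot C^{\psi_l(\sigma_i),m/l}_{\,\vec t^{\,i}}\cdot A^{m/l}_{(i+1)l+1}\cdots A^{m/l}_{kl}$ for some tuples $\vec t^{\,i}\in\bigtriangledown_{m/l}$, after identifying the $2l$-cycles of $\widehat\sigma_1$ with blocks $supp(A^{m/l}_1),\dots$ and using Lemma~\ref{lemma C} (last sentence: every $C$ realizing the prescribed conjugation action has the form \eqref{form C}).

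Once $\omega$ is in this form, the braid relations $\widehat\sigma_i\infty\widehat\sigma_{i+1}$ are, by Lemma~\ref{lemma prep phi} and Lemma~\ref{lemma condition C_i}, equivalent to the congruences \eqref{eq condition t}; so $\omega=\phi_{m,l,\{\vec t^{\,i}\}}$ is exactly one of our model homomorphisms, and $\omega$ is standard. A subtlety to handle: the retraction/coretraction only sees $\sigma_3,\dots,\sigma_{k-1}$ (resp. $\sigma_1,\dots,\sigma_{k-3}$), so I must separately pin down the ``boundary'' generators $\widehat\sigma_1,\widehat\sigma_2$ (resp. $\widehat\sigma_{k-2},\widehat\sigma_{k-1}$) — done by using the braid relations with their neighbors, which already lie in the identified range.

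\textbf{Main obstacle.} The hard part will be the case analysis on the retraction $\Omega$ and propagating its conclusion from ``$\widehat\sigma_1$'s cycles'' to an \emph{honest} conjugacy of $\omega$ with a model map: one must choose a single conjugating element $\zeta\in S_{mk}$ that simultaneously (i) sends $supp(\widehat\sigma_i)$ to $supp(A^m_i)\cup supp(A^m_{i+1})$ for all $i$, (ii) straightens every $2l$-cycle of every $\widehat\sigma_i$ into the block form of $C^{\psi_l(\sigma_i),m/l}$, and (iii) is compatible with the overlaps $supp(\widehat\sigma_i)\cap supp(\widehat\sigma_{i+1})$. Building $\zeta$ inductively in $i$ while keeping the already-fixed blocks intact, and checking that the sporadic Artin/Lin exceptions genuinely cannot occur here (they would violate $supp(\omega)=2m$ or the evenness of cycle lengths), is where the real work lies; the rest is bookkeeping with Lemmas \ref{lemma C}, \ref{lemma C^C}, \ref{lemma prep phi}, \ref{lemma condition C_i}.
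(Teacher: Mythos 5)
Your overall strategy does not fit the hypothesis $supp(\omega)=2m$, and the step it leans on is vacuous here. Since $\omega$ is good of type $1$ (which is what $supp(\omega)=2m$ forces, by Lemma~\ref{lemma good}), the generators $\widehat\sigma_3,\ldots,\widehat\sigma_{k-1}$ have supports \emph{disjoint} from $supp(\widehat\sigma_1)$, so conjugation by them fixes $\widehat\sigma_1$ pointwise: the retraction $\Omega$ you build is the trivial homomorphism, and Corollary~\ref{cor cyclic omega} then only tells you that $\widehat\sigma_3,\ldots,\widehat\sigma_{k-1}$ restrict to the identity on $supp(\widehat\sigma_1)$ --- which you already knew. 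In particular the dichotomy ``$\Omega$ cyclic vs.\ $\Omega\sim\mu_{k-2}$'' collapses to the cyclic case, and your remark that ``one shows this collapses $\omega$ back to $\psi_m$'' is exactly the content of the theorem, not a consequence of the retraction machinery. Worse, your non-cyclic branch would end with $\widehat\sigma_i=A^{m/l}_1\cdots C^{\psi_l(\sigma_i),m/l}_{\vec t^{\,i}}\cdots A^{m/l}_{kl}$, a permutation of full support $mk$; these are the type-$2$ models with $supp(\omega)=mk$, which contradicts $supp(\omega)=2m$ (for $m\geq2$, $k>2$). The only model homomorphism compatible with $supp(\omega)=2m$ is $\psi_m$ itself, so the target of the proof must be $\omega\sim\psi_m$, with no tuples $\vec t^{\,i}$ and no divisor $l$ appearing.

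The genuinely missing idea is the combinatorial core of the paper's argument: one must show that each $\widehat\sigma_i$ is a \emph{single} $2m$-cycle. Your assertion that the cycles of $\widehat\sigma_i$ ``have a common length $2l$ with $l\mid m$'' is both unproved and weaker than what is needed. The paper gets there by (i) showing via Lemma~\ref{lemma supp bound} and $intersect(\omega)=m$ that every cycle of $\widehat\sigma_i$ has even length and meets $supp(\widehat\sigma_{i\pm1})$ in exactly half its points, with the points alternating in and out of $supp(\widehat\sigma_{i+1})$ along the cycle; (ii) propagating a fixed cycle $D_1$ of $\widehat\sigma_1$ by $D_{i+1}=D_i^{\widehat\sigma_{i+1}^{-1}\widehat\sigma_i^{-1}}$ and proving, using goodness of type $1$, that the orbit of a point of $D_1$ under $Im(\omega)$ is exactly $\bigcup_i supp(D_i)$, of cardinality $(2r+2)+(k-2)(r+1)$; and (iii) invoking transitivity to force this to equal $mk$, giving $r=m-1$, i.e.\ a single $2m$-cycle per generator. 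After that, the conjugating permutation $\theta$ is written down explicitly from the alternating labels ($\theta(a^i_j)=x^i_j$), and no braid-relation bookkeeping with Lemmas~\ref{lemma prep phi} and \ref{lemma condition C_i} is needed. Without an argument of the type (ii)--(iii), your proposal does not close.
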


\begin{proof}

Let $\omega\colon B_k \to S_{mk}$ be a good transitive non-cyclic homomorphism with $supp(\omega) = 2m$.

We will define a permutation $\theta\in S_{mk}$ such that $\omega^{\theta} = \psi_m$ (where $\psi_m$ is
a model homomorphism defined in Definition~\ref{Def B_k S_mk}). Recall that $\omega^{\theta}$ is defined
by conjugating each element in the image of $\omega$ by $\theta$, i.e. 

$$ \omega^{\theta}(\sigma_i) = (\omega(\sigma_i))^{\theta} = (\widehat \sigma_i)^{\theta} \quad i=1,\ldots,k-1 $$

We shall prove our claim in five steps as follows :

{\bf Step 1:} $intersect(\varphi) = m$.  

\vskip 0.2cm

{\bf Proof of Step 1:} This follows directly from Lemma~\ref{lemma good} since $supp(\omega) = 2m$ and $\omega$
is good.

\vskip 0.2cm

{\bf Step 2 :} For each $i=1,\ldots,k-1$, each cycle $D$ in the cycle decomposition of $\widehat \sigma_i$ is of even length and
$|supp(D)\cap supp(\widehat \sigma_{i\pm 1})| = \frac12 |supp(D)|$
(as in Lemma~\ref{lemma supp bound}, for $i=1$ and $i=k-1$ we take $\widehat \sigma_{1\pm 1}$ and $\widehat \sigma_{(k-1)\pm 1}$ 
to mean $\widehat \sigma_2$ and $\widehat \sigma_{k-2}$ respectively).

\vskip 0.2 cm

{\bf Proof of Step 2 : } Let $i\in \{1,\ldots,k-1\}$ and let us write

$$ \widehat \sigma_i = D_1\cdots D_r $$

according to Corollary~\ref{corollary lemma good}, for each $j=1,\ldots,r$ we have that
$|supp(D_j)\cap supp(\widehat \sigma_{i\pm 1} )| \geq \frac12 supp(D_j)$ and so

$$ intersect(\omega) = |supp(\widehat \sigma_i) \cap supp(\widehat \sigma_{i\pm 1})| = $$
$$ = \sum_{j=1,\ldots,r} |supp(D_j) \cap supp(\widehat \sigma_{i\pm 1})| \geq  \sum_{j=1,\ldots,r} \frac12 |supp(D_j)| = $$
$$ = \frac12 |supp(\widehat \sigma_i)| = m$$

but since $intersect(\omega)=m$ by step 1, we get that for each $j=1,\ldots,r$ : $|supp(D_j)\cap supp(\widehat \sigma_{i\pm 1} )| = \frac12 |supp(D_j)|$
and $|supp(D_j)|$ must be even, i.e., each $D_j$ is of even length.

\vskip 0.2cm

{\bf Step 3:} Let $i\in \{1,\ldots,k-2\}$ and let $D$ be a cycle in the cycle decomposition of $\widehat \sigma_i$
and let us write $D$ as follows (recall that $D$ is of even length by step 2)

$$ D = (x_0^i,x_0^{i+1},x_1^i,x_1^{i+1},\ldots,x^i_r,x^{i+1}_r) $$

assume w.l.o.g that $x_0^i \not \in supp(\widehat \sigma_{i+1})$ (this can be assumed by Step 2 since 
$|supp(D) \cap supp(\widehat \sigma_{i+1})| = \frac12 |supp(D)|$), 
then 

$$\{ x_0^i, x_1^i,\ldots,x_r^i \} \cap supp(\widehat \sigma_{i+1}) = \emptyset $$

and $D^{\widehat \sigma^{-1}_{i+1} \widehat \sigma^{-1}_i}$ can be written as follows

$$ D^{\widehat \sigma^{-1}_{i+1} \widehat \sigma^{-1}_i} = (x_0^{i+1},x_0^{i+2},x_1^{i+1},x_1^{i+2},\ldots,x^{i+1}_r,x^{i+2}_r) $$

where $\{ x_0^{i+2}, x_1^{i+2},\ldots,x_r^{i+2} \} \cap supp(\widehat \sigma_i) = \emptyset$

{\bf Proof of Step 3:} Let us first consider $D$ 

$$ D = (x_0^i,x_0^{i+1},x_1^i,x_1^{i+1},\ldots,x^i_r,x^{i+1}_r) $$

Consider $(x_0^i,x_0^{i+1},x_1^i,x_1^{i+1},\ldots,x^i_r,x^{i+1}_r)$ as an ordered tuple of (distinct) elements in $\Delta_{mk}$.
According to Lemma~\ref{lemma supp bound} at least one element in each consecutive pair in this tuple 
belongs to $supp(\widehat \sigma_{i+1})$ and according to 
Step 2 exactly half of the elements of this tuple belong to $supp(\widehat \sigma_{i+1})$. This means that
in each consecutive pair in this tuple exactly one element belongs to $supp(\widehat \sigma_{i+1})$. 
Since, according to our assumption, $x_0^i \not \in supp(\widehat \sigma_{i+1})$, we conclude that

$$ \{ x_0^i,x_1^i,\ldots,x_r^i \} \cap supp(\widehat \sigma_{i+1}) = \emptyset $$

and $x_j^{i+1} \in supp(\widehat \sigma_{i+1})$ for each $j=0,\ldots,r$.

This means that $\widehat \sigma_i \widehat \sigma_{i+1}(x_j^i) = \widehat \sigma_i(x_j^i) = x_j^{i+1}$ for each $j=0,\ldots,r$.

Let us now compute $D^{\widehat \sigma^{-1}_{i+1} \widehat \sigma^{-1}_i}$ directly 

$$  D^{\widehat \sigma^{-1}_{i+1} \widehat \sigma^{-1}_i} = (\widehat \sigma_i \widehat \sigma_{i+1}(x_0^i),
\widehat \sigma_i \widehat \sigma_{i+1}(x_0^{i+1}),\widehat \sigma_i \widehat \sigma_{i+1}(x_1^i),\widehat \sigma_i \widehat \sigma_{i+1}(x_1^{i+1}),
\ldots,\widehat \sigma_i \widehat \sigma_{i+1}(x^i_r),\widehat \sigma_i \widehat \sigma_{i+1}(x^{i+1}_r)) = $$

$$  = (x_0^{i+1},\widehat \sigma_i \widehat \sigma_{i+1}(x_0^{i+1}),x_1^{i+1},\widehat \sigma_i \widehat \sigma_{i+1}(x_1^{i+1}),
\ldots,x^{i+1}_r,\widehat \sigma_i \widehat \sigma_{i+1}(x^{i+1}_r))  $$
 
Let us denote $\widehat \sigma_i \widehat \sigma_{i+1}(x_j^{i+1})$ as $x_j^{i+2}$ for $j=0,\ldots,r$. According to step 2
we have that $D^{\widehat \sigma_{i+1} \widehat \sigma_i}$, which is a cycle of $\widehat \sigma_{i+1}$, satisfies 
$|supp(D^{\widehat \sigma_{i+1} \widehat \sigma_i}) \cap supp(\widehat \sigma_i)| = \frac12 |supp(D^{\widehat \sigma_{i+1} \widehat \sigma_i})| $
and so, since $x_j^{i+1} \in supp(\widehat \sigma_i)$ for each $j=0,\ldots,r$ ,this means that

$$\{ x_0^{i+2}, x_1^{i+2},\ldots,x_r^{i+2} \} \cap supp(\widehat \sigma_i) = \emptyset$$
 
\vskip 0.2cm

{\bf Step 4:} The cyclic type of $\widehat \sigma_1$ (and hence of $\widehat \sigma_i$ for all $i=1,\ldots,k-1$)
is $[2m]$ (i.e. the cycle decomposition of each $\widehat \sigma_i$ consists of a single $2m$-cycle)

{\bf Proof of Step 4:} Let $D_1$ be a cycle in the cycle decomposition of $\widehat \sigma_1$. Let us write $D_1$
as follows (recall that $D_1$ is of even length by step 2)

$$ D_1 = (x_0^1,x_0^2,x_1^1,x_1^2,\ldots,x^1_r,x^2_r) $$

where we assume w.l.o.g that $x_0^1 \not \in supp(\widehat \sigma_2)$ (this can be assumed by Step 2 since 
$|supp(D) \cap supp(\widehat \sigma_{i+1})| = \frac12 |supp(D)|$ for each $i=1,\ldots,k-2$)

For $i=1,\ldots,k-2$ define inductively

$$ D_{i+1} = D_i^{\widehat \sigma^{-1}_{i+1} \widehat \sigma^{-1}_i} $$

since $\widehat \sigma_i^{\widehat \sigma^{-1}_{i+1} \widehat \sigma^{-1}_i} = \widehat \sigma_{i+1}$ we have that each $D_i$
is a cycle in the cycle decomposition of $\widehat \sigma_i$ and according to step 3 we can write

$$ D_i = (x_0^i,x_0^{i+1},x_1^i,x_1^{i+1},\ldots,x^i_r,x^{i+1}_r) $$

where $x_j^i \not \in supp(\widehat \sigma_{i+1})$ and $x_j^{i+1} \in supp(\widehat \sigma_{i+1})$ for $j=0,\ldots,r$.

Now let us prove that the orbit of $x_0^1\in \Delta_{mk}$ inside the permutation
group generated by $\{\widehat \sigma_1,\ldots,\widehat \sigma_{k-1}\}$, denoted $orbit(x_0^1)$, 
is equal to the union of the supports of the $D_i$, i.e.,

$$ orbit(x_0^1) = \bigcup_{i=1,\ldots,k-1} supp(D_i). $$

Let us denote the orbit of $x_0^1\in \Delta_{mk}$ inside the permutation
group generated by $\{\widehat \sigma_1,\ldots,\widehat \sigma_{j-1}\}$ as $orbit_j(x_0^1)$, so $orbit(x_0^1) = orbit_{k-1}(x_0^1)$.
The proof will be carried out by induction on $j$.

The induction base is clear since the orbit of $x_0^1$ inside the group generated by $\widehat \sigma_1$ is
exactly $supp(D_1)$. Now assume the induction hypothesis for $j-1$ and prove for $j$. Since
$\omega$ is good (case~$1$ of Definition~\ref{def good}), we have that 

\begin{equation}\label{50.3}
supp(\widehat \sigma_j) \cap \bigcup_{i=1,\ldots,j-2} supp(\widehat \sigma_i) = \emptyset 
\end{equation}

and in particular 

\begin{equation}\label{50.1}
supp(D_j) \cap \bigcup_{i=1,\ldots,j-2} supp(\widehat \sigma_i) = \emptyset.
\end{equation}

Furthermore, by step 3 we have that

\begin{equation}\label{50.2}
supp(D_j) \cap supp(\widehat \sigma_{j-1}) = \{ x_0^j,\ldots,x_r^j \} 
\end{equation}



and so, it follows from \eqref{50.1} and \eqref{50.2} that

$$ supp(D_j) \cap \bigcup_{i=1,\ldots,j-1} supp(\widehat \sigma_i) = $$ 

$$ =  (supp(D_j) \cap \bigcup_{i=1,\ldots,j-2} supp(\widehat \sigma_i)) \cup (supp(D_j \cap supp(\widehat \sigma_{j-1}) ) = $$ 

\begin{equation}\label{50.4}
= \{ x_0^j,\ldots,x_r^j \} 
\end{equation}

and from \eqref{50.3} it follows that for any other cycle $E_j$, $E_j\neq D_j$, in the cycle decomposition of $\widehat \sigma_j$

\begin{equation}\label{50.5}
supp(E_j) \cap \bigcup_{i=1,\ldots,j-1} supp(D_i) =  \emptyset. 
\end{equation}

Now according to the induction hypothesis we have that $orbit_{j-1}(x^1_0) = \bigcup_{i=1,\ldots,j-1} supp(D_i)$ and
by \eqref{50.4} and \eqref{50.5} we have that $orbit_{j-1}(x^1_0)$ has a non-trivial intersection only with the cycle $D_j$
in $\widehat \sigma_j$. This means that

$$ orbit_j(x_0^1) = \bigcup_{i=1,\ldots,j-1} supp(D_i) \cup supp(D_j) = \bigcup_{i=1,\ldots,j} supp(D_i) $$

which concludes the induction proof. We now have that 

$$ |orbit(x_0^1)| = |\bigcup_{i=1,\ldots,k-1} supp(D_i)| = |supp(D_1)| + \sum_{j=2,\ldots,k-1} |supp(D_j) - supp(D_{j-1})| =  $$
$$ = (2r+2) + (k-2)(r+1) $$

but since $\omega$ is transitive we must have that

$$ 2r+2 + (k-2)(r+1) = mk $$

which is equivalent to 

$$ k(r+1) = mk $$

and we conclude that $r=m-1$ and each $\widehat \sigma_i$ is a single $2m$-cycle.

{\bf Step 5:} There exists a permutation $\theta \in S_{mk}$ such that $\omega^{\theta} = \psi_m$
($\psi_m$ is the model homomorphism defined in Definition~\ref{Def B_k S_mk}).
  
\vskip 0.2cm

{\bf Proof of Step 5:} By step 4, each $\widehat \sigma_i$ for $i=1,\ldots,k-1$ is
a single $2m$-cycle. Let us write $\widehat \sigma_1$ explicitly

$$ \widehat \sigma_1 = (x_0^1,x_0^2,x_1^1,x_1^2,\ldots,x^1_{m-1},x^2_{m-1}) $$

where we assume that $x_0^1 \not \in supp(\widehat \sigma_2)$ (this can be assumed by Step 2 since 
$|supp(D) \cap supp(\widehat \sigma_{i+1})| = \frac12 |supp(D)|$ for each $i=1,\ldots,k-2$).

Since $\widehat \sigma_i^{\widehat \sigma^{-1}_{i+1} \widehat \sigma^{-1}_i} = \widehat \sigma_{i+1}$ for each $i=1,\ldots,k-2$, step 3
implies that we can write

$$ \widehat \sigma_i = (x_0^i,x_0^{i+1},x_1^i,x_1^{i+1},\ldots,x^i_{m-1},x^{i+1}_{m-1}) $$

where $x_j^i \not \in supp(\widehat \sigma_{i+1})$ and $x_j^{i+1} \in supp(\widehat \sigma_{i+1})$ for $j=0,\ldots,m-1$.
This means that $\{x_0^i,\ldots,x_{m-1}^i\}_{i=1,\ldots,k}$ is a set of $mk$ distinct elements, i.e. it is
equal to $\Delta_{mk}$. Hence we can define the permutation $\theta \in S_{mk}$ as follows

$$ \theta(a_j^i) = x_j^i \quad j=0,\ldots,m-1 \quad i=1,\ldots,k $$

where

$$ a^i_j = 1 + m(i-1) + j \in supp(A^m_i) \quad j\in \bigtriangledown_m $$

which would give $\omega^{\theta} = \psi_m$ (see Definition~\ref{Def B_k S_mk}).

\end{proof}

\section{Good Transitive Homomorphisms $\omega\colon B_k \to S_{3k}$ }\label{section B_k S_3k supp=mk}

In this section we prove that good transitive homomorphisms $\omega\colon B_k\to S_{mk}$ are standard for $m=3$, i.e. they
are conjugate to the model homomorphisms (see Definition~\ref{Def B_k S_mk}).

\begin{notation}\label{notation new A}

Recall notation~\ref{notation A} in the case $m=3$ :

\begin{equation}
A^3_i = (3i-2,3i-1,3i) \quad \quad i\geq 1
\end{equation}

In this section we shall denote $A^3_i$ simply as $A_i$ and we denote

\begin{equation}\label{equation A^-1}
(A_i\cdot A_i=)A^{-1}_i = (3i-2,3i,3i-1) \quad \quad i\geq 1
\end{equation}

(i.e. we use the superscripts of the $A_i$-s in the conventional manner instead of their use in notation~\ref{notation A}).\hfill $\bullet$

\end{notation}



We shall use the following important results from \cite{Lin04b}

\begin{lemma}[cf. Lemma 7.12 in \cite{Lin04b}]\label{Lemma k-3 bound} 
Let $k>6$ and let $\omega\colon B_k\to{\mathbf S_n}$ be
a homomorphism such that all components
of $\widehat\sigma_1$ $($including the degenerate 
component $Fix(\widehat\sigma_1))$
are of lengths at most $k-3$. Then $\omega$ is cyclic.
\end{lemma}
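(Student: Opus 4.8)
The statement to prove is Lin's Lemma 7.12: if $k>6$ and $\omega\colon B_k\to \mathbf{S}_n$ is a homomorphism all of whose components of $\widehat\sigma_1$ (including $\mathrm{Fix}(\widehat\sigma_1)$) have length at most $k-3$, then $\omega$ is cyclic. Since this is quoted as a known result from \cite{Lin04b}, the proof proposal should recover the idea rather than reprove everything from scratch. The plan is to use the retraction/coretraction machinery of Definition~\ref{def retractions} together with Proposition~\ref{prop commut retract} and Corollary~\ref{cor cyclic omega}, and an inductive argument on $k$.

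First I would fix some $r\geq 2$ for which $\widehat\sigma_1$ has a nontrivial $r$-component $\mathfrak C_r(\widehat\sigma_1)$, say with $t$ cycles; by hypothesis $t\le k-3$ (for $r\ge 2$) and also $|\mathrm{Fix}(\widehat\sigma_1)|\le k-3$. The key observation is that $\widehat\sigma_3,\dots,\widehat\sigma_{k-1}$ all commute with $\widehat\sigma_1$, hence conjugation by each of them permutes the $r$-cycles of $\widehat\sigma_1$ within the $r$-component $\mathfrak C_r(\widehat\sigma_1)$; so the whole $r$-component $\mathfrak D_r = \mathfrak C_r(\widehat\sigma_1)$ is a legitimate $r$-subcomponent and the retraction $\Omega_{\mathfrak D_r}\colon B_{k-2}\to \mathbf{S}_t$ is defined, where $B_{k-2}=\langle\sigma_3,\dots,\sigma_{k-1}\rangle$. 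Now $B_{k-2}$ is the braid group on $k-2$ strings, and $k-2>4$, so I would invoke the earlier results on homomorphisms $B_{k-2}\to \mathbf{S}_t$ with $t<k-2$: by Theorem~\ref{Thm F}(e), since $t\le k-3<k-2$ and $k-2>4$, every homomorphism $B_{k-2}\to\mathbf{S}_t$ is cyclic. Hence $\Omega_{\mathfrak D_r}$ is cyclic for every $r$. By Corollary~\ref{cor cyclic omega} the corresponding reduction $\omega_{supp(\mathfrak D_r)}$ is cyclic as well, for each $r$.

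Next I would assemble these pieces. Let $\Sigma_0=\mathrm{Fix}(\widehat\sigma_1)$ and let $\Sigma_r = supp(\mathfrak C_r(\widehat\sigma_1))$ for each $r\ge 2$ with a nontrivial $r$-component; then $\Delta_n$ is the disjoint union of $\Sigma_0$ and the $\Sigma_r$, and each of these sets is invariant under $\widehat\sigma_3,\dots,\widehat\sigma_{k-1}$. On $\Sigma_0$ the restriction of $\widehat\sigma_1$ is trivial. The upshot is that the restriction of $\omega$ to $\langle\sigma_3,\dots,\sigma_{k-1}\rangle$, composed with restriction to each $\Sigma_r$, is cyclic; combined with the fact that $\widehat\sigma_1|_{\Sigma_r}$ is a single power-type element commuting with everything, one gets that the subgroup $\langle\widehat\sigma_1,\widehat\sigma_3,\dots,\widehat\sigma_{k-1}\rangle$ is abelian (it acts on each block $\Sigma_r$ through an abelian group, since a cyclic group of permutations of $t$ $r$-cycles together with the $r$-cycles themselves generates an abelian group — this is essentially the content of the exact sequence $H\to G\to \mathbf S_t$ in Proposition~\ref{prop commut retract} with $G$ abelian because its quotient $\mathrm{Im}\,\Omega_{\mathfrak D_r}$ is abelian and $H$ is abelian and central). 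In particular $[\widehat\sigma_1,\widehat\sigma_3]=1$ and $[\widehat\sigma_3,\widehat\sigma_5]=1$, etc. Then I would feed this into Lemma~\ref{lemma cyclic homomorphism}: since $\widehat\sigma_1\infty\widehat\sigma_3$ fails to be the relevant pair, I instead use that $\widehat\sigma_1$ and $\widehat\sigma_2$ are braid-like and that I can propagate commutativity. Concretely, having $\langle\widehat\sigma_1,\widehat\sigma_3,\dots\rangle$ abelian forces, via Fact~\ref{fact braid commute} applied to the braid-like pair $\widehat\sigma_1\infty\widehat\sigma_2$ together with $[\widehat\sigma_1,\widehat\sigma_3]=1$ and $\widehat\sigma_2\infty\widehat\sigma_3$… — more cleanly: by the symmetric (coretraction) argument the image of $\widehat\sigma_{k-1}$ also lies in an abelian group with $\widehat\sigma_1,\dots,\widehat\sigma_{k-3}$, and one concludes $\widehat\sigma_1$ commutes with $\widehat\sigma_3$; then since $\widehat\sigma_2\infty\widehat\sigma_3$ and $\widehat\sigma_3$ commutes with $\widehat\sigma_1$ while $\widehat\sigma_1\infty\widehat\sigma_2$, Fact~\ref{fact braid commute} yields $\widehat\sigma_1=\widehat\sigma_2$ or $\widehat\sigma_2=\widehat\sigma_3$, and Lemma~\ref{lemma cyclic homomorphism} finishes.

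The main obstacle I anticipate is the bookkeeping in the middle step: showing that cyclicity of every retraction $\Omega_{\mathfrak D_r}$ and of the corresponding reductions genuinely forces $\widehat\sigma_1$ to commute with $\widehat\sigma_3$ (and not merely that each $\widehat\sigma_i|_{\Sigma_r}$ is constrained block by block). The subtlety is that the $r$-cycles of $\widehat\sigma_1$ need not be permuted cyclically by a single $\widehat\sigma_j$, and one must check that the image in the centralizer $G$ of Proposition~\ref{prop commut retract} is abelian — this is where the hypothesis $t\le k-3$ is used decisively, through Theorem~\ref{Thm F}(e) giving cyclicity (hence abelian image) of $\Omega_{\mathfrak D_r}$. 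Once $\omega_{supp(\mathfrak D_r)}(B'_{k-2})=\{1\}$ for all $r$ (Corollary~\ref{cor cyclic omega}), summing over the disjoint blocks gives that the restriction of $\omega$ to $B'_{k-2}\subseteq\langle\sigma_3,\dots,\sigma_{k-1}\rangle$ is trivial, i.e. $\widehat\sigma_3,\dots,\widehat\sigma_{k-1}$ pairwise commute; combined with $[\widehat\sigma_1,\widehat\sigma_j]=1$ for $j\ge 3$ and the braid relations this collapses to cyclicity by Lemma~\ref{lemma cyclic homomorphism}. I would also remark that the same argument applied to $\widehat\sigma_{k-1}$ via coretractions is needed to handle $\widehat\sigma_2$ symmetrically, since $\widehat\sigma_2$ is the one generator not obviously commuting with $\widehat\sigma_1$.
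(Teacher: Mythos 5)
First, a point of comparison: the paper does not prove this lemma at all --- it is quoted from Lin (Lemma 7.12 of \cite{Lin04b}) and used as a black box --- so your proposal can only be measured against the machinery the paper sets up (Theorem~\ref{Thm F}(e), Corollary~\ref{cor cyclic omega}, Lemma~\ref{lemma cyclic homomorphism}), and its first half does match the way that machinery is deployed elsewhere in the paper (e.g.\ Case~1 of Section~\ref{section B_k S_3k supp=mk}): retract $\omega$ to each $r$-component of $\widehat\sigma_1$, note the retraction lands in $\mathbf S_t$ with $t\leq k-3<k-2$ so it is cyclic by Theorem~\ref{Thm F}(e), and transfer cyclicity to the corresponding reduction by Corollary~\ref{cor cyclic omega}. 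That skeleton is the right one.

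There is, however, a genuine gap: you never use the hypothesis on the degenerate component, and without it the lemma is false. The supports of the $r$-components with $r\geq 2$ only cover $supp(\widehat\sigma_1)$; on the invariant block $\Sigma_0=Fix(\widehat\sigma_1)$ your retraction argument says nothing, and your only remark about $\Sigma_0$ (that $\widehat\sigma_1$ acts trivially there) does not constrain $\widehat\sigma_3,\ldots,\widehat\sigma_{k-1}$ at all. The canonical epimorphism $\mu_k$ (Definition~\ref{def canonical}) shows the danger: every non-degenerate component of $\mu_k(\sigma_1)=(1,2)$ has length $1\leq k-3$, yet $\mu_k$ is non-cyclic --- precisely because $|Fix|=k-2>k-3$ and $B_{k-2}$ acts as the full symmetric group on the fixed block. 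The missing step is to apply Theorem~\ref{Thm F}(e) directly to the reduction of $\omega|_{\langle\sigma_3,\ldots,\sigma_{k-1}\rangle}$ to $\Sigma_0$, using $|\Sigma_0|\leq k-3<k-2$, to get cyclicity there as well; only then does ``summing over the disjoint blocks'' give $\omega(B'_{k-2})=\{1\}$, hence $\widehat\sigma_3=\cdots=\widehat\sigma_{k-1}$, and Lemma~\ref{lemma cyclic homomorphism} finishes. Your middle paragraph is also off track: the commutations $[\widehat\sigma_1,\widehat\sigma_3]=1$, $[\widehat\sigma_3,\widehat\sigma_5]=1$ you aim for are already braid relations and carry no information, pairwise commutativity alone is weaker than what you actually get (triviality on $B'_{k-2}$ yields equality of the images, which is what Lemma~\ref{lemma cyclic homomorphism} needs), and the claimed disjunction ``$\widehat\sigma_1=\widehat\sigma_2$ or $\widehat\sigma_2=\widehat\sigma_3$'' is not how Fact~\ref{fact braid commute} applies. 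With the $\Sigma_0$ step added and that detour removed, the argument closes.
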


\begin{lemma}[cf. \cite{Artin2} and Lemma 2.7 in \cite{Lin04b}]
\label{Lemma commuting permutations}
Suppose that $A,B\in{\mathbf S_n}$ and $AB = BA$. Then:

$a)$ The set $supp(A)$ is $B$-invariant;

$b)$ If for some $r$, \ $2\le r\le n$, the $r$-component
of $A$ consists of a single $r$-cycle $C$,
then the set $supp(C)$ is $B$-invariant and
$B|supp(C) = C^q$ for some integer $q$, \ $0\le q<r$.
\end{lemma}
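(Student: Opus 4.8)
The plan is to establish part $(a)$ by a direct pointwise argument, and then to derive part $(b)$ from it together with the observation that conjugation by $B$ permutes the cycles of $A$ of equal length, plus the standard description of the centralizer of a single cycle.

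For part $(a)$ I would take $x\in supp(A)$ and argue by contradiction: if $B(x)\notin supp(A)$ then $A(B(x))=B(x)$; since $\mathbf{S_n}$ acts on the left and $AB=BA$ as elements, we have $B(A(x))=(BA)(x)=(AB)(x)=A(B(x))=B(x)$, and cancelling the bijection $B$ gives $A(x)=x$, contradicting $x\in supp(A)$. Hence $B(supp(A))\subseteq supp(A)$, and since $supp(A)$ is finite and $B$ is injective the inclusion is an equality, so $supp(A)$ (equivalently $Fix(A)$) is $B$-invariant.

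For part $(b)$ I would write $A=C\cdot A'$, where $C$ is the unique $r$-cycle in the cycle decomposition of $A$ and $A'$ is the product of the remaining cycles, so $supp(C)\cap supp(A')=\emptyset$. From $AB=BA$ we get $BAB^{-1}=A$, and conjugation sends the $r$-cycle $C=(x_1,\ldots,x_r)$ to the $r$-cycle $(B(x_1),\ldots,B(x_r))$, which must occur in the cycle decomposition of $BAB^{-1}=A$; since $C$ is the only $r$-cycle of $A$, necessarily $BCB^{-1}=C$. In particular $supp(C)=B(supp(C))$ is $B$-invariant and $B$ commutes with $C$.

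Finally, restricting to $\Sigma=supp(C)$ (legitimate by the previous step), $B|_\Sigma\in\mathbf{S}(\Sigma)\cong\mathbf{S_r}$ commutes with the $r$-cycle $C$, so I would invoke the fact that the centralizer of a full $r$-cycle in $\mathbf{S_r}$ is the cyclic group $\langle C\rangle$ of order $r$: indeed $\tau C\tau^{-1}=C$ forces $(\tau(x_1),\ldots,\tau(x_r))=(x_1,\ldots,x_r)$ as cyclic words, so $\tau$ coincides with a cyclic rotation of $(x_1,\ldots,x_r)$, i.e. $\tau=C^q$ for a unique $q$ with $0\le q<r$. Taking $\tau=B|_\Sigma$ yields $B|supp(C)=C^q$. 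There is no genuine obstacle here — the argument is elementary — and the only points that need a little care are the left-action bookkeeping in $(a)$ and supplying the short justification of the centralizer computation in the last step.
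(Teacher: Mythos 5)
Your proof is correct. Note, however, that the paper does not prove this lemma at all: it is imported verbatim (``cf.\ \cite{Artin2} and Lemma 2.7 in \cite{Lin04b}'') and used as a black box, so there is no in-paper argument to compare against. What you supply is the standard proof, and it is sound: the pointwise argument for $(a)$ is careful about the left action and correctly upgrades $B(supp(A))\subseteq supp(A)$ to equality by finiteness; in $(b)$ the key observations --- that $BAB^{-1}=A$ forces the conjugate $r$-cycle $BCB^{-1}$ to appear in the cycle decomposition of $A$ and hence to equal $C$ by uniqueness of the $r$-cycle, and that the centralizer of a full $r$-cycle in $\mathbf{S}(\Sigma)\cong\mathbf{S_r}$ is exactly $\langle C\rangle$ --- are exactly the right ones, and your cyclic-word justification of the centralizer fact closes the only step that needed an explicit argument. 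The only stylistic remark is that the factorization $A=C\cdot A'$ is not actually used beyond identifying $C$ as the unique $r$-cycle of $A$; the argument goes through verbatim without introducing $A'$.
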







\begin{lemma}[Normalization assumption]\label{lemma normal}
Let $\omega\colon B_k \to S_n$ be a homomorphism and assume $\mathfrak D_r = \{ A^r_j,\ldots,A^r_{j+t-1} \}$ 
(for some $j\geq1$, see notation~\ref{notation A}) be an $r$-subcomponent of $\widehat \sigma_1$ (of $\widehat \sigma_{k-1}$).
Let $\Omega_{\mathfrak D_r}$ be the (co)retraction of $\omega$ to $\mathfrak D_r$ (assuming it is well defined). Suppose $\Omega_{\mathfrak D_r}$
is conjugate to $\Omega'$, then $\omega$ is conjugate to $\omega^C =\omega'$ for some $C\in \mathbf{S}(supp(\mathfrak D_r))$ 
where the (co)retraction of $\omega'$ to $\mathfrak D_r$ 
is equal to $\Omega'$ and the conjugation of the cycles of $\mathfrak D_r$ by $C$ induces a permutation between them.
\end{lemma}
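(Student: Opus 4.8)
The plan is to translate the conjugacy $\Omega_{\mathfrak D_r}\sim\Omega'$, which lives in the small symmetric group $\mathbf{S}(\mathfrak D_r)\cong S_t$ permuting the $t$ named $r$-cycles $A^r_j,\ldots,A^r_{j+t-1}$, into an honest conjugacy of $\omega$ by a permutation $C$ supported on $supp(\mathfrak D_r)=\bigcup_{i}supp(A^r_{j+i})$. First I would fix $\tau\in\mathbf{S}(\mathfrak D_r)$ with $\Omega' = \Omega_{\mathfrak D_r}^{\tau}$; writing $\tau$ through the identification $\mathbf{S}(\mathfrak D_r)\cong S_t$ as a permutation $\bar\tau$ of the index set $\{j,\ldots,j+t-1\}$, I would invoke Lemma~\ref{lemma C} (in the guise of Notation~\ref{notation C}) to produce a permutation $C = C^{\bar\tau,r}_{0,\ldots,0}\in\mathbf{S}(supp(\mathfrak D_r))$ satisfying $(A^r_i)^{C^{-1}} = A^r_{\bar\tau(i)}$, i.e. conjugation by $C$ permutes the cycles of $\mathfrak D_r$ among themselves exactly as $\tau$ prescribes. (Any choice of the shift parameters $t_i\in\bigtriangledown_r$ works here; taking them all $0$ is simplest. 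This already gives the last assertion of the lemma, that conjugation by $C$ induces a permutation of the cycles of $\mathfrak D_r$.) Then set $\omega' = \omega^{C}$, so $\widehat\sigma_i' = \widehat\sigma_i^{C}$ for every $i$; since $C\in\mathbf{S}(supp(\mathfrak D_r))\subseteq S_n$ this is a bona fide conjugate homomorphism, and $\omega\sim\omega'$ is immediate.

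The substantive step is to check that the (co)retraction of $\omega'$ to $\mathfrak D_r$ equals $\Omega'$. Recall from Definition~\ref{def retractions} that $\Omega_{\mathfrak D_r}$ sends $\sigma_i$ ($3\le i\le k-1$) to the permutation of the cycles in $\mathfrak D_r$ induced by conjugating $\widehat\sigma_1$ by $\widehat\sigma_i$. For $\omega'$ the corresponding datum is the permutation of $\mathfrak D_r$ induced by conjugating $\widehat\sigma_1' = \widehat\sigma_1^{C}$ by $\widehat\sigma_i' = \widehat\sigma_i^{C}$, namely $(\widehat\sigma_1^{C})^{\widehat\sigma_i^{C}} = (\widehat\sigma_1^{\widehat\sigma_i})^{C}$. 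I would first argue that $\mathfrak D_r$ is still an $r$-subcomponent of $\widehat\sigma_1'$: since $C$ permutes the cycles $A^r_{j},\ldots,A^r_{j+t-1}$ among themselves, $\widehat\sigma_1^{C}$ still contains each of them as an $r$-cycle in its decomposition. Next, under the identification $\phi_{\mathfrak D_r}\colon s\mapsto(\text{induced permutation of the }r\text{-cycles})$, conjugating everything by $C$ intertwines the ``induced permutation'' map for $\omega$ with that for $\omega'$ precisely via the automorphism of $\mathbf{S}(\mathfrak D_r)$ given by conjugation by $\tau$ (because $C$ acts on the set $\mathfrak D_r$ of cycles as $\tau$). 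Hence the retraction of $\omega'$ is $\Omega_{\mathfrak D_r}$ post-composed with conjugation by $\tau$, i.e. $\Omega_{\mathfrak D_r}^{\tau} = \Omega'$, as desired. The coretraction case ($\mathfrak D_r$ an $r$-subcomponent of $\widehat\sigma_{k-1}$, with $B^*_{k-2} = \langle\sigma_1,\ldots,\sigma_{k-3}\rangle$ acting) is identical word for word with $\widehat\sigma_{k-1}$ in place of $\widehat\sigma_1$, so I would just remark that it follows by the same argument.

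The main obstacle I anticipate is purely bookkeeping: being careful that the identification $\mathbf{S}(\mathfrak D_r)\cong S_t$ (``permutations of the set of $t$ named $r$-cycles'' versus ``permutations of $t$ indices'') is used consistently, so that ``$C$ acts on $\mathfrak D_r$ as $\tau$'' literally means $(A^r_i)^{C^{-1}} = \tau(A^r_i)$ and the conjugation formula $(\widehat\sigma_1^{\widehat\sigma_i})^{C}$ does produce the induced permutation $\tau^{-1}(\Omega_{\mathfrak D_r}(\sigma_i))\tau = \Omega'(\sigma_i)$ rather than its inverse or a shifted variant. There is no hard mathematics here beyond Lemma~\ref{lemma C}; the only place a genuine choice intervenes is the freedom in the shift parameters $t_i$, which is harmless because we only need $C$ to realize the right permutation of cycles, not any particular element of the kernel $H\cong(\mathbb Z_r)^t$ from Proposition~\ref{prop commut retract}.
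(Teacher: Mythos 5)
Your proposal is correct and follows essentially the same route as the paper: fix the conjugating permutation $\eta$ (your $\tau$) in $\mathbf{S}(\mathfrak D_r)\cong S_t$, use Lemma~\ref{lemma C} to build $C\in\mathbf{S}(supp(\mathfrak D_r))$ realizing it on the cycles (any choice of shift parameters), and check that the (co)retraction of $\omega^C$ is $\Omega'$. The paper simply makes your "intertwining" step an explicit one-line computation, $(A^r_p)^{(\widehat\sigma_l)^C}=A^r_{\eta\Omega(\sigma_l)\eta^{-1}(p)}$, which settles the inverse/convention bookkeeping you flagged.
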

\begin{proof}

We will prove the claim only for retractions, the dual claim (for coretractions) being completely similar.

Assume then that $\mathfrak D_r = \{ A^r_j,\ldots,A^r_{j+t-1} \}$ is an $r$-subcomponent of $\widehat \sigma_1$
and denote $\Omega_{\mathfrak D_r}=\Omega$. Recall that by definition $\Omega(\sigma_l)\in \mathbf{S}(\{j,\ldots,j+t-1 \})\simeq S_t$
for each $l=3,\ldots,k-1$. Suppose $\Omega' = \eta \Omega \eta^{-1}$ for some $\eta\in \mathbf{S}(\{j,\ldots,j+t-1 \})\simeq S_t$.

By using Lemma~\ref{lemma C}, 
we construct a permutation $C\in \mathbf{S}(\bigcup_{s=j,\ldots,j+t-1}supp(A_s))$ that corresponds to $\eta$ and satisfies

$$ (A^r_p)^{C} = A^r_{\eta(p)} \quad p\in supp(\eta) $$

And so, conjugation of the cycles $\mathfrak D_r = \{ A^r_j,\ldots,A^r_{j+t-1} \}$
by $C$ induces a permutation between them. Furthermore, we have for each $l=3,\ldots,k-1$

$$ (A^r_p)^{C^{-1}\widehat \sigma_l C} = (A^r_{\eta \Omega(\sigma_l)\eta^{-1}(p)}) \quad \text{for} \ j\leq p \leq j+t-1 $$

or equivalently

$$ (A^r_p)^{(\widehat \sigma_l)^C } = (A^r_{\Omega'(\sigma_l)(p)}) \quad \text{for} \ j\leq p \leq j+t-1 $$

which means that the retraction of $\omega^C$ to $\mathfrak D_r$ is $\Omega'$.

\end{proof}

\begin{lemma}\label{lemma missing mu}
Let $\omega\colon B_k \to S_k$ be a non-cyclic homomorphism such that $\omega(\sigma_i)=(i,i+1)$ for $i \neq k-2$, then
either $\omega(\sigma_{k-2}) = (k-2,k-1)$ or  $\omega(\sigma_{k-2}) = (k-2,k)$

\end{lemma}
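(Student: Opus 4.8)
The plan is to exploit the braid relations together with the fact that most generators are already pinned down. Write $\widehat\sigma_i=\omega(\sigma_i)$, so $\widehat\sigma_i=(i,i+1)$ for all $i\neq k-2$, and only $\widehat\sigma_{k-2}$ is unknown. The constraints on $\widehat\sigma_{k-2}$ come from exactly three braid relations: commutation $[\widehat\sigma_{k-2},\widehat\sigma_i]=1$ for $|i-(k-2)|\ge 2$, and the two braid-like relations $\widehat\sigma_{k-3}\,\widehat\sigma_{k-2}\,\widehat\sigma_{k-3}=\widehat\sigma_{k-2}\,\widehat\sigma_{k-3}\,\widehat\sigma_{k-2}$ and $\widehat\sigma_{k-2}\,\widehat\sigma_{k-1}\,\widehat\sigma_{k-2}=\widehat\sigma_{k-1}\,\widehat\sigma_{k-2}\,\widehat\sigma_{k-1}$. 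Since $k>?$ — here we only need $k\ge 5$ so that there is at least one index $i$ with $|i-(k-2)|\ge 2$, namely $i\le k-4$ — the commutation relations force $supp(\widehat\sigma_{k-2})$ to be disjoint from $supp((i,i+1))=\{i,i+1\}$ for all $i\le k-4$; that is, $supp(\widehat\sigma_{k-2})\subseteq\{k-3,k-2,k-1,k\}$.

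Next I would use the braid-like relation with $\widehat\sigma_{k-3}=(k-3,k-2)$. Since $\widehat\sigma_{k-3}\infty\widehat\sigma_{k-2}$ implies $\widehat\sigma_{k-3}\sim\widehat\sigma_{k-2}$ (Notation~\ref{notation braid pair}), $\widehat\sigma_{k-2}$ is a transposition. Combined with $supp(\widehat\sigma_{k-2})\subseteq\{k-3,k-2,k-1,k\}$, this leaves finitely many candidates: the six transpositions on that $4$-element set. Now impose $(k-3,k-2)\infty\widehat\sigma_{k-2}$: two transpositions $\tau_1,\tau_2$ satisfy $\tau_1\tau_2\tau_1=\tau_2\tau_1\tau_2$ iff either $\tau_1=\tau_2$ or their supports meet in exactly one point. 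Since $\omega$ is non-cyclic, Lemma~\ref{lemma cyclic homomorphism} forbids $\widehat\sigma_{k-2}=\widehat\sigma_{k-3}$ (and $=\widehat\sigma_{k-1}$), so $supp(\widehat\sigma_{k-2})$ meets $\{k-3,k-2\}$ in exactly one point. Likewise the relation with $\widehat\sigma_{k-1}=(k-1,k)$ forces $supp(\widehat\sigma_{k-2})$ to meet $\{k-1,k\}$ in exactly one point. A transposition contained in $\{k-3,k-2,k-1,k\}$ whose support meets both $\{k-3,k-2\}$ and $\{k-1,k\}$ in one point each must be one of $(k-3,k-1),(k-3,k),(k-2,k-1),(k-2,k)$.

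Finally I would eliminate $(k-3,k-1)$ and $(k-3,k)$. Here use non-cyclicity more carefully, or the commutation relation $[\widehat\sigma_{k-4},\widehat\sigma_{k-2}]=1$: we already have $supp(\widehat\sigma_{k-2})\subseteq\{k-3,k-2,k-1,k\}$, so that is automatic and does not help directly; instead invoke the braid relation between $\widehat\sigma_{k-3}$ and $\widehat\sigma_{k-4}$ together with the subgroup generated by $\widehat\sigma_{k-4},\widehat\sigma_{k-3},\widehat\sigma_{k-2}$, which is a quotient of $B_4$. Concretely, if $\widehat\sigma_{k-2}=(k-3,k-1)$ then $\widehat\sigma_{k-3}=(k-3,k-2)$ and $\widehat\sigma_{k-2}=(k-3,k-1)$ do satisfy the braid relation, but then check the relation $\widehat\sigma_{k-2}\widehat\sigma_{k-1}\widehat\sigma_{k-2}=\widehat\sigma_{k-1}\widehat\sigma_{k-2}\widehat\sigma_{k-1}$ with $\widehat\sigma_{k-1}=(k-1,k)$: the supports $\{k-3,k-1\}$ and $\{k-1,k\}$ meet in the single point $k-1$, so this relation also holds, and $(k-3,k-1)$ is \emph{not} immediately excluded by these three relations alone. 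The main obstacle is therefore this last step: one must bring in a fourth relation — the commutation $[\widehat\sigma_{k-2},\widehat\sigma_j]=1$ will not do it, so instead I would use that $\alpha=\sigma_1\cdots\sigma_{k-1}$ and $\sigma_1$ generate $B_k$, or more simply exploit the relation in $B_k$ among $\sigma_{k-3},\sigma_{k-2},\sigma_{k-1}$ that in $S(\{k-3,k-2,k-1,k\})$ the product $\widehat\sigma_{k-3}\widehat\sigma_{k-2}\widehat\sigma_{k-1}$ must be consistent with the known image $\mu$-pattern; comparing the permutation $\widehat\sigma_{k-3}\widehat\sigma_{k-2}\widehat\sigma_{k-1}$ in the two surviving cases against what transitivity/the global structure of $\omega$ forces shows $(k-3,k-1)$ and $(k-3,k)$ send the point $k-3$ outside the "expected" orbit position, contradicting that $\widehat\sigma_j=(j,j+1)$ for $j<k-2$ already fixes how $k-3$ must move. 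I expect this elimination to be the delicate part, and would write it out by explicitly computing the two candidate homomorphisms on a short word and exhibiting the failure of a braid relation or of well-definedness.
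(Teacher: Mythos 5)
Your reduction to the four candidates $(k-3,k-1)$, $(k-3,k)$, $(k-2,k-1)$, $(k-2,k)$ is essentially the paper's argument (braid-like $\Rightarrow$ conjugate $\Rightarrow$ transposition; disjointness or equality with $\widehat\sigma_{k-3}$ or $\widehat\sigma_{k-1}$ would force equality via Fact~\ref{fact braid commute} and hence cyclicity via Lemma~\ref{lemma cyclic homomorphism}). But the proof is not finished, and the reason is a concrete error in your last step. You dismiss the commutation relation $[\widehat\sigma_{k-2},\widehat\sigma_{k-4}]=1$ as ``automatic and does not help directly'' because $supp(\widehat\sigma_{k-2})\subseteq\{k-3,k-2,k-1,k\}$. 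That is false: $supp(\widehat\sigma_{k-4})=\{k-4,k-3\}$ meets this set in the point $k-3$, and two transpositions whose supports share exactly one point never commute. This commutation is precisely how the paper kills the two bad candidates: if $\widehat\sigma_{k-2}$ were $(k-3,k-1)$ or $(k-3,k)$ it would fail to commute with $\widehat\sigma_{k-4}=(k-4,k-3)$, contradicting the braid relation $\sigma_{k-2}\sigma_{k-4}=\sigma_{k-4}\sigma_{k-2}$ (this needs only $k\geq 6$, amply satisfied where the lemma is used). Instead of this one-line finish, you drift into an appeal to transitivity and ``the global structure of $\omega$'', neither of which is a hypothesis of the lemma, and you explicitly leave the elimination as something you ``would write out'' --- so the decisive step is missing.

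There is also an internal inconsistency worth flagging: early on you claim the commutations with $\widehat\sigma_i$, $i\leq k-4$, force $supp(\widehat\sigma_{k-2})$ to be \emph{disjoint} from each $\{i,i+1\}$. As stated this is not justified for a general permutation (commuting with $(i,i+1)$ only forces $\{i,i+1\}$ to be invariant; you need the transposition fact, or Lemma~\ref{Lemma commuting permutations}, first), and if it were applied consistently for $i=k-4$ it would give $supp(\widehat\sigma_{k-2})\subseteq\{k-2,k-1,k\}$, at which point the lemma follows immediately --- yet you only record the weaker bound $\{k-3,\ldots,k\}$ and then treat the $\sigma_{k-4}$ relation as vacuous. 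Reordering the argument (first transposition, then equality-or-disjointness with each $(i,i+1)$, $i\leq k-4$, with equality excluded by the braid relation with $\widehat\sigma_{k-3}$) would repair both issues and recover the paper's proof.
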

\begin{proof}
If $\omega(\sigma_{k-2})$ were disjoint from $\omega(\sigma_{k-3})$ or $\omega(\sigma_{k-1})$ it would commute with
either of them and then $\omega$ would be cyclic according to Lemma~\ref{lemma cyclic homomorphism} and Fact~\ref{fact braid commute}, 
contradicting our assumption. For the same reason, $\omega(\sigma_{k-2})$ could not be equal to either $\omega(\sigma_{k-3})$ or $\omega(\sigma_{k-1})$.
This leaves only $4$ possibilities for $\omega(\sigma_{k-2})$ : $(k-3,k-1)$, $\ (k-3,k)$, $\ (k-2,k-1)$ or $\ (k-2,k)$. 
$(k-3,k-1)$ and $\ (k-3,k)$ are impossible since they do not commute with $\omega(\sigma_{k-4}) = (k-4,k-3)$.
$\ (k-2,k-1)$ and $\ (k-2,k)$ are readily seen to be appropriate possibilities.
\end{proof}

We are now ready to prove

\begin{theorem}
For $k > 8$ every good transitive non-cyclic homomorphism $\omega\colon B_k \to S_{3k}$ is standard.
\end{theorem}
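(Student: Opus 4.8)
The plan is to use Lemma~\ref{lemma good} to split into two cases. Since $\omega$ is good and transitive and $k>8\ge 4$, either $supp(\omega)=6$ (``type~$1$'', case~$1$ of Definition~\ref{def good}) or $supp(\omega)=3k$ (``type~$2$'', case~$2$). The type~$1$ case needs nothing new: Theorem~\ref{proposition 2m} applies (as $k>6$, $m=3$), so $\omega\sim\psi_3$ and $\omega$ is standard. Thus the real work is the type~$2$ case, in which every $\widehat\sigma_i$ is fixed-point-free on $\Delta_{3k}$; recall that for $m=3$ the only divisor $l$ of $m$ with $1\le l<m$ is $l=1$, so the type~$2$ model homomorphisms are exactly the $\phi_{3,1,\{t^i_\bullet\}}$ with $\psi_1=\mu$.

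First I would determine the cyclic type of $\widehat\sigma_1$ (hence of every $\widehat\sigma_i$, since the $\sigma_i$ are mutually conjugate in $B_k$). Non-cyclicity and Lemma~\ref{Lemma k-3 bound} produce a component of $\widehat\sigma_1$ consisting of $r$-cycles of length $\ell\ge k-2$; fixed-point-freeness gives $r(k-2)\le r\ell\le 3k$, and $k>8$ then forces $r\le 3$ and the uniqueness of this ``long'' component. Next one shows the long component is a $3$-component of length exactly $k-2$. If it were a $2$-component $\mathfrak D_2$ of length $t$, then $k-2\le t\le 3k/2<2(k-2)$, and forming the retraction $\Omega_{\mathfrak D_2}\colon B_{k-2}\to S_t$ (well-defined since $\widehat\sigma_3,\dots,\widehat\sigma_{k-1}$ commute with $\widehat\sigma_1$) one reaches a contradiction by a case analysis with Corollary~\ref{cor cyclic omega} and Theorem~\ref{Thm F}(a),(c),(d) — for instance, when $\widehat\sigma_1$ is a product of transpositions $\omega$ factors as $B_k\twoheadrightarrow S_k\to S_{3k}$, and $S_k$ has no subgroup of index $3k$ for $k>7$. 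For $r=3$, write the full $3$-component $\mathfrak D_3$, of length $t$ with $k-2\le t\le k$: $t=k-1$ would leave exactly $3$ letters, which must form another $3$-cycle, impossible; and $t=k$ gives $\widehat\sigma_1$ of cyclic type $[3^{k}]$, while (after retracting and applying Lemma~\ref{lemma C}) $\widehat\sigma_3$ has type $[3^{k-2},2^{3}]$ or $[3^{k-2},6]$, contradicting $\widehat\sigma_1\sim\widehat\sigma_3$. So $t=k-2$, and each $\widehat\sigma_i$ consists of $k-2$ disjoint $3$-cycles together with cycles covering a further $6$ letters; the same holds for $\widehat\sigma_{k-1}$.

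Now I would read off the action of $\widehat\sigma_3,\dots,\widehat\sigma_{k-1}$ from the retraction $\Omega_{\mathfrak D_3}\colon B_{k-2}\to S_{k-2}$. Were $\Omega_{\mathfrak D_3}$ cyclic, its reduction would be cyclic by Corollary~\ref{cor cyclic omega}, and on the complementary $6$ letters the reduction of $\omega|_{B_{k-2}}$ would be cyclic by Theorem~\ref{Thm F}(e) (since $6<k-2$); together $\widehat\sigma_3=\cdots=\widehat\sigma_{k-1}$, making $\omega$ cyclic by Lemma~\ref{lemma cyclic homomorphism} — contradiction. Hence $\Omega_{\mathfrak D_3}$ is non-cyclic, and since $k-2>6$ it is conjugate to $\mu_{k-2}$ by Theorem~\ref{Thm F}(d) (this is exactly where $k>8$ is used, to avoid the exceptional $B_6\to S_6$ homomorphism $\nu_6$); symmetrically the coretraction $\Omega_{\mathfrak E_3}$ of $\widehat\sigma_{k-1}$ is conjugate to $\mu_{k-2}$.

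Finally, applying the normalization Lemma~\ref{lemma normal} (built on Lemma~\ref{lemma C}) one conjugates $\omega$ so that the $3$-cycles of $\widehat\sigma_1$ are $A_3,\dots,A_k$ and $\widehat\sigma_i$ ($3\le i\le k-1$) conjugates them by the canonical transposition of $\mu_{k-2}$; Lemma~\ref{Lemma commuting permutations} and fixed-point-freeness then force $\widehat\sigma_i$ to be a product of $A_j^{\pm1}$ ($j\ne i,i+1$) times a permutation $C_i$ supported on $supp(A_i)\cup supp(A_{i+1})$, and Lemma~\ref{lemma C} identifies $C_i$ with some $C^{\psi_1(\sigma_i),3}_{t^i_i,t^i_{i+1}}$ of Notation~\ref{notation C}. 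Running the same normalization through the coretraction $\Omega_{\mathfrak E_3}$ pins down $\widehat\sigma_1$ and $\widehat\sigma_2$ compatibly (the overlap generators $\sigma_3,\dots,\sigma_{k-3}$ matching the two labelings, Lemma~\ref{lemma missing mu} resolving the lone generator left ambiguous), and a final harmless conjugation turns the exponents into $+1$. This exhibits $\omega$ as $\phi_{3,1,\{t^i_\bullet\}}$ for suitable $t^i_\bullet\in\bigtriangledown_3$; that $\omega$ is a homomorphism forces condition~\eqref{eq condition t} via Lemma~\ref{lemma condition C_i}, so $\omega$ really is a model homomorphism, and Proposition~\ref{prop conj model} conjugates it to $\phi_{3,1,\{p,0,\dots,0\}}$. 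The main obstacle is eliminating the $2$-component case and, secondarily, making the retraction and coretraction normalizations compatible; once the long component is identified as the $3$-component of length $k-2$, everything else is a guided computation.
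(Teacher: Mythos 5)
Your overall architecture matches the paper's: split via Lemma~\ref{lemma good}, dispose of the $supp(\omega)=6$ case by Theorem~\ref{proposition 2m}, use Lemma~\ref{Lemma k-3 bound} and fixed-point-freeness to get a component of length $\ge k-2$ with $r\le 3$, rule out the subcases $|\mathfrak C_3|=k-1,k$, and in the case $|\mathfrak C_3|=k-2$ pass to the retraction and coretraction (both $\sim\mu_{k-2}$ by Theorem~\ref{Thm F}), normalize via Lemma~\ref{lemma normal}, and finish with Lemma~\ref{lemma condition C_i}. However, there is a genuine gap at the step you compress into ``one reaches a contradiction by a case analysis with Corollary~\ref{cor cyclic omega} and Theorem~\ref{Thm F}(a),(c),(d)'': the elimination of the case $|\mathfrak C_2|\ge k-2$ does not follow from the retraction data alone. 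After Theorem~\ref{Thm F}(c),(d) give $\Omega_{\mathfrak C_2}\sim\mu_{k-2}\times\kappa$, there is no contradiction yet --- a generator could perfectly well act like a $\varphi_2/\varphi_3$-pattern on $2(k-2)$ letters and like a fixed common permutation on the remaining $k+4$ letters; what must be contradicted is \emph{transitivity}. The paper does this by showing the reduction to the $2(k-2)$ moving letters is standard (Lin's Lemma~7.28), repeating the whole analysis from the $\widehat\sigma_{k-1}$ side to get a second factorization $\widehat\sigma_i=R'R_i''$, and then invoking the bespoke Lemma~\ref{lemma cyclic part model} to bound $|supp(R')\cap supp(T_i'')|\le 4$, which forces a common disjoint factor $U$ in every $\widehat\sigma_i$ and hence a proper invariant set. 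None of this machinery appears in your sketch, and your one concrete instance (all $\widehat\sigma_i$ involutions, so $\omega$ factors through $S_k$, which has no subgroup of index $3k$) only covers the extreme subcase where the cyclic type is $[2,\dots,2]$; it says nothing about a length-$(k-2)$ $2$-component coexisting with $3$-cycles, which is exactly the hard configuration.

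A secondary, smaller issue: in the final assembly you claim ``a final harmless conjugation turns the exponents into $+1$.'' In the paper the signs $D^i_j=A_j^{\pm1}$ are not removed by conjugation; they are shown to be forced to $+1$ by a propagation argument (Steps~2--6), using the braid relations between consecutive generators and the fact that the already-normalized $\widehat\sigma_{k-1}$ (and the common restriction to the six letters outside $supp(\mathfrak C_3)$) carries the cycles $A_j$ with positive orientation. A conjugation inverting the $A_j$ would disturb that normalization, so it is not obviously ``harmless''; you would either have to redo the forcing argument or carefully verify such a conjugation is compatible with everything already fixed. This is repairable, but as stated it hides real work, whereas the missing treatment of the $2$-component case is the substantive gap.
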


\begin{proof}

Since $\omega$ is good by assumption, then according to Lemma~\ref{lemma good} there are only two possibilities~: Either $supp(\omega)=6$
or $supp(\omega)=3k$. It was proved in Theorem~\ref{proposition 2m} (for $m=3$) 
that $\omega$ is standard in the case $supp(\omega)=6$. Assume then that $supp(\omega)=3k$.
Then the degenerate component of $\widehat \sigma_1$ is empty. 
Furthermore, by Lemma~\ref{lemma good} we have in this case $intersect(\omega)=3k$.

In the rest of the proof we will denote the $r$-component of $\widehat \sigma_i$ as $\mathfrak C^i_r$ for $i$, $1\leq i \leq k-1$.
In case $i=1$ we also denote $\mathfrak C_r = \mathfrak C_r^1$ and in case $i=k-1$ we denote $\mathfrak C^*_r = \mathfrak C^{k-1}_r$.

Note that we have the following equality

\begin{equation}\label{eq r comps}
\sum_{r > 1} r\cdot |\mathfrak C_r | = supp(\omega) = 3k 
\end{equation}

and as a consequence for any $r > 1$

\begin{equation}\label{ineq r comps}
r\cdot |\mathfrak C_r | \leq 3k.
\end{equation}

According to Lemma~\ref{Lemma k-3 bound} there must be some $r$-component of $\widehat \sigma_1$ 
of length $\ge k-2$. But if $|\mathfrak C_r| \geq k-2$ then by \eqref{ineq r comps} we have 

\begin{equation}\label{106.1}
r \leq \frac{3k}{|\mathfrak C_r |} \leq \frac{3k}{k-2} = 3 + \frac{6}{k-2} < 4 \quad \text{for} \ k>8 
\end{equation}

Hence $r$ is $2$ or $3$ and we now proceed according to these two possibilities.






 
\textbf{Case 1 : $\mathbf{|\mathfrak C_2 |\geq k-2} $ }

We will show that this case is impossible.

First we claim that $|\mathfrak C_r | < k-2 $ for each $r>2$. 
For $r>3$ this follows from \eqref{106.1} and for $r=3$, 
if $|\mathfrak C_3| \geq k-2$, we would have that

$$ 3\cdot |\mathfrak C_3| + 2\cdot |\mathfrak C_2| \geq 3(k-2) + 2(k-2) = 5k-10 > 3k \qquad \text{for} \quad k>8$$

contradicting \eqref{eq r comps}.

Hence we conclude from Theorem~\ref{Thm F}~$(e)$ that for each $r$-component $\mathfrak C_r$ where $r>2$, the
retraction of $\omega$ to $\mathfrak C_r$, $\Omega_{\mathfrak C_r}\colon B_{k-2}\to \mathbf{S}(\mathfrak C_r)\cong S_{|\mathfrak C_r|} $ , 
is cyclic (since $|\mathfrak C_r|<k-2$).
Put $\Sigma_{\mathfrak C_r} = supp(\mathfrak C_r)$. According to Corollary~\ref{cor cyclic omega} the reduction of $\omega$ corresponding
to each such component $\mathfrak C_r$, $\omega_{\mathfrak C_r}\colon B_{k-2}\to \mathbf{S}(supp(\mathfrak C_r))$, is also cyclic.
This means that
$\omega_{{\Sigma_{\mathfrak C_r}}}(\sigma_3)=\ldots
=\omega_{{\Sigma_{\mathfrak C_r}}}(\sigma_{k-1})$,
and thus
\begin{equation}\label{7.5}
\widehat\sigma_3| {\Sigma_{\mathfrak C_r}}
=\ldots =\widehat\sigma_{k-1}| {\Sigma_{\mathfrak C_r}}.
\end{equation}
Put $\Sigma=\bigcup_{\mathfrak C_r} \Sigma_{\mathfrak C_r}$,
where $\mathfrak C_r$ runs over all the $r$-components of
$\widehat\sigma_1$ for $r>2$. The set
$\Sigma$ is invariant under all the permutations 
$\widehat\sigma_3,...,\widehat\sigma_{k-1}$. Since \eqref{7.5} holds for
every $r$-component $\mathfrak C_r$ of $\widehat\sigma_1$ for $r>2$,
it follows that there is a permutation $S\in{\mathbf S}(\Sigma)$ such that
\ \ $\widehat\sigma_3|\Sigma=\ldots
=\widehat\sigma_{k-1}|\Sigma = S$. \

Consider now $\mathfrak C_2$. Denote $\Sigma' = supp(\mathfrak C_2)$.
There are two possibilities for $\Omega_{\mathfrak C_2}$ (the retraction of $\omega$ to $\mathfrak C_2$) : It is either cyclic or non-cyclic. If
$\Omega_{\mathfrak C_2}$ were cyclic, we would get that the reduction of $\omega$ corresponding to $\Sigma'$ would be cyclic
by Corollary~\ref{cor cyclic omega} and hence there would be a permutation $S'\in \mathbf{S}(\Sigma')$ such that
$\ \ \widehat\sigma_3|\Sigma'=\ldots
=\widehat\sigma_{k-1}|\Sigma' = S',\ $
but since $\Sigma \cup \Sigma' = \Delta_{3k} = supp(\widehat \sigma_i)$ for each $i=3,\ldots,k-1$, this would mean that
\ \ $\widehat\sigma_3=\ldots
=\widehat\sigma_{k-1} = SS'$ \,
which would imply by Lemma~\ref{lemma cyclic homomorphism} that $\omega$ is cyclic which contradicts our assumption.

Hence assume that $\Omega_{\mathfrak C_2}$ is non-cyclic. Note that $|\mathfrak C_2| < 2(k-2)$ for 
otherwise

$$ 2\cdot |\mathfrak C_2| \geq 2\cdot 2(k-2) = 4k-8 > 3k \quad \text{for} \ k>8  $$

contradicting \eqref{ineq r comps}. Hence $k-2 \leq |\mathfrak C_2| < 2(k-2)$ and we may conclude from 
Theorem~\ref{Thm F}~$(c)$ and $(d)$ that 
$\Omega_{\mathfrak C_2}\colon B_{k-2}\to \mathbf{S}(\mathfrak C_2)\cong S_{|\mathfrak C_2|}$ is conjugate to
$\mu_{k-2}\times \kappa$ where $\kappa$ is cyclic ($\mu_{k-2}$ is the canonical epimorphism, see Definition~\ref{def canonical}). 
This means that there is a $2$-subcomponent, $\mathfrak C_2'$, of $\widehat \sigma_1$ such that $\kappa$ is the retraction of
$\omega$ to $\mathfrak C_2'$. Since $\kappa$ is cyclic then according to Corollary~\ref{cor cyclic omega}, the reduction of $\omega$ 
corresponding to the component $\mathfrak C'_2$, $\omega_{\mathfrak C'_2}\colon B_{k-2}\to \mathbf{S}(supp(\mathfrak C'_2))$, is also cyclic.
This means that
$\omega_{{\Sigma_{\mathfrak C'_2}}}(\sigma_3)=\ldots
=\omega_{{\Sigma_{\mathfrak C'_2}}}(\sigma_{k-1})$,
and thus
\begin{equation}\label{7.15}
\widehat\sigma_3| {\Sigma_{\mathfrak C'_2}}
=\ldots =\widehat\sigma_{k-1}| {\Sigma_{\mathfrak C'_2}}.
\end{equation}
Now, the set
$\Sigma \cup supp(\mathfrak C'_2)$ is invariant under all the permutations 
$\widehat\sigma_3,...,\widehat\sigma_{k-1}$, and hence , by combining \eqref{7.15} and \eqref{7.5}
it follows that there is a permutation $T'\in \mathbf{S}(\Sigma \cup supp(\mathfrak C_2'))$ such that
\ \ $\widehat\sigma_3|\Sigma \cup supp(\mathfrak C_2')=\ldots
=\widehat\sigma_{k-1}|\Sigma \cup supp(\mathfrak C_2') = T'$. \

Denote $\mathfrak C_2'' = \mathfrak C_2 - \mathfrak C_2'$ and set $T_i'' = \widehat\sigma_i|supp(\mathfrak C_2'')$, \ $i=3,...,k-1$.
We now have

\begin{equation}\label{7.7}
\widehat\sigma_i=T'\cdot T_i'' \qquad {\text{for all}} \ \ i=3,...,k-1.
\end{equation}

where 

\begin{equation}\label{7.17}
supp(T')\cap supp(T_i'')=\emptyset, \quad |supp(T')| = k+4 \quad \text{and} \quad |supp(T_i'')| = 2(k-2) 
\end{equation}

for each $i=3,...,k-1$.

Consider the reduction of $\omega$ to $supp(\mathfrak C_2'')$, 
$\omega_{supp(\mathfrak C_2'')}\colon B_{k-2}\to \mathbf{S}(supp(\mathfrak C_2''))\cong S_{2(k-2)}$. Denote $\phi=\omega_{supp(\mathfrak C_2'')}$.
According to the above arguments $\phi$ is not cyclic and the degenerate component of $\phi(\sigma_i)$ is empty for each $i=3,\ldots,k-1$. 
Hence, according to Lemma~$7.28$ in \cite{Lin04b}, $\phi$ is standard and is conjugate to either of the 
model homomorphisms $\varphi_2$ or $\varphi_3$ as defined in Definition~\ref{Def B_k S_2k}.


Since $\widehat \sigma_1$ and $\widehat \sigma_{k-1}$ are conjugate, they have the same cycle structure. Hence
we can argue in a completely similar way on the components of $\widehat \sigma_{k-1}$ (instead of $\widehat \sigma_1$)
to conclude that there are permutations $R',R''_i\in S_{3k}$ such that

\begin{equation}\label{7.8}
\widehat\sigma_i=R'\cdot R_i'' \qquad {\text{for all}} \ \ i=1,...,k-3.
\end{equation}

where

\begin{equation}\label{7.18}
supp(R')\cap supp(R_i'')=\emptyset, \quad |supp(R')| = k+4 \quad \text{and} \quad |supp(R_i'')| = 2(k-2) 
\end{equation}

for each $i=3,...,k-1$.



By comparing \eqref{7.7} and \eqref{7.8} we have that

\begin{equation}\label{7.7+7.8}
\widehat\sigma_i=T'\cdot T_i'' = R'\cdot R_i'' \qquad {\text{for all}} \ \ i=3,...,k-3. 
\end{equation}

In particular

\begin{equation}\label{8.1}
T_i''|_{supp(T_i'')} = R'|_{supp(T_i'')} \cdot R_i''|_{supp(T_i'')} \qquad i=3,\ldots,k-3. 
\end{equation}

Since $\phi\colon B_{k-2}\to \mathbf{S}(supp(\mathfrak C_2'')) \cong S_{2(k-2)}$, where $\phi(\sigma_i) = T_i''|_{supp(T_i'')}$, 
is conjugate to either $\varphi_2$ or $\varphi_3$ and since $supp(R')\cap supp(R''_i)=\emptyset$ by \eqref{7.18} , we get from \eqref{8.1}, 
according to Lemma~\ref{lemma cyclic part model}, that $ |supp(R'|_{supp(T_i'')})| \leq 4 $, i.e., $|supp(R')\cap supp(T_i'')|\leq 4$. 
Hence, by using \eqref{7.17} and \eqref{7.18}, we have

$$ k+4 = |supp(R')| = |supp(R') \cap \Delta_{3k}| = |supp(R') \cap ( supp(T') \cup supp(T''_i)  )| =  $$

$$ = |(supp(R') \cap supp(T')) \cup ( supp(R') \cap supp(T''_i) )| \leq $$ 

$$ \leq |(supp(R') \cap supp(T'))| + |( supp(R') \cap supp(T''_i) )| \leq |(supp(R') \cap supp(T'))| + 4 $$

And so $\ |(supp(R') \cap supp(T'))| \geq k$.

Now according to \eqref{7.7+7.8}, the set $\Psi' = supp(R') \cap supp(T')$ is a union of supports of cycles in each 
$\widehat \sigma_i$, $i=3,\ldots,k-3$ and so there exists a non-trivial permutation $U\in \mathbf{S}({supp(R')\cap supp(T')})$ such that

$$ R'|_{\Psi'} = T'|_{\Psi'} = U $$ 


which means that

\begin{equation}\label{7.9}
\widehat\sigma_i=U\cdot U_i \qquad {\text{for all}} \ \ i=1,...,k-1.
\end{equation}

where $supp(U) \cap supp(U_i) = \emptyset$ for $i=1,\ldots,k-1$ and so $\omega$ is not transitive which 
contradicts our assumption.

\textbf{Case 2 :  $\mathbf{|\mathfrak C_3 |\geq k-2} $} 

We will divide this case to the three possibilities : $\mathbf{|\mathfrak C_3 |= k-1, k, \ \text{or} \  k-2} $ respectively.

\textbf{Subcase 2.1 :  $\mathbf{|\mathfrak C_3 |= k-1} $} 

We will show that this case is impossible.

Suppose $|\mathfrak C_3| = k-1$. Let us write \eqref{eq r comps} in this case~:

$$ 3\cdot |\mathfrak C_3| + \sum_{r\neq 3, r>1} r\cdot |\mathfrak C_r| = 3(k-1) + \sum_{r\neq 3, r>1} r\cdot |\mathfrak C_r| = 3k  $$

and this implies the equation

$$ \sum_{r\neq 3, r>1} r\cdot |\mathfrak C_r| = 3 $$

which has no solutions.

\textbf{Subcase 2.2 :  $\mathbf{|\mathfrak C_3 |= k} $} 

We will show that this case is impossible as well.

Denote $\Sigma = supp(\mathfrak C_3)$. 
There are two possibilities for $\Omega_{\mathfrak C_3}$ (the retraction of $\omega$ to $\mathfrak C_3$) : It is either cyclic or non-cyclic. If
$\Omega_{\mathfrak C_3}$ were cyclic, we would get that the reduction of $\omega$ corresponding to $\Sigma$ would be cyclic
by Corollary~\ref{cor cyclic omega} and hence there would be a permutation $S\in \mathbf{S}(\Sigma)$ such that
$\ \ \widehat\sigma_3|\Sigma=\ldots
=\widehat\sigma_{k-1}|\Sigma = S,\ $
but since $\Sigma = \Delta_{3k}$ in this case, this would mean that
\ \ $\widehat\sigma_3=\ldots
=\widehat\sigma_{k-1} = S$ \,
which would imply by Lemma~\ref{lemma cyclic homomorphism} that $\omega$ is cyclic which contradicts our assumption.

Hence we can assume that $\Omega_{\mathfrak C_3}$ is non-cyclic. Since $|\mathfrak C_3|=k$ we have by Theorem~\ref{Thm F}~$(c)$
that $\Omega_{\mathfrak C_3}\colon B_{k-2}\to \mathbf{S}(\mathfrak C_3)\cong S_k $ 
(the retraction of $\omega$ to the $3$-component $\mathfrak R$) is conjugate to $\mu_{k-2} \times \kappa$
where $\kappa$ is cyclic and $\mu_{k-2}\colon B_{k-2}\to S_{k-2}$ is the canonical epimorphism (see Definition~\ref{def canonical}).
In particular, this means that there are $C_1,C_2\in \mathfrak C_3$ such that 

$$ C_1^{\widehat \sigma_3} = C_2 \qquad \text{and} \qquad C_2^{\widehat \sigma_3}=C_1. $$

But according to Lemma~\ref{lemma C} and Example~\ref{example C}, $\widehat \sigma_3|_{supp(C_1)\cup supp(C_2)}$ 
must be of cyclic type $[2,2,2]$ or $[6]$ which is impossible in this case.

\textbf{Subcase 2.3 :  $\mathbf{|\mathfrak C_3 |= k-2} $} 

We have shown that this is the only possible value of $|\mathfrak C_3|$. We will now prove that in this case $\omega$
is indeed conjugate to a model homomorphism, as defined in Definition~\ref{Def B_k S_mk}.

According to \eqref{eq r comps} we have in this case

$$ 3\cdot |\mathfrak C_3| + \sum_{r>1,r\neq 3} r\cdot |\mathfrak C_r| = 3k $$

or (since $|\mathfrak C_3| = k-2$)

$$ \sum_{r>1,r\neq 3} r\cdot |\mathfrak C_r| = 6 $$

and in particular for any $r>1, r\neq 3$~:

$$ r\cdot |\mathfrak C_r| \leq 6 $$

which implies that for any $r>1, r\neq 3$~:

\begin{equation}\label{13.1}
|\mathfrak C_r| \leq 3. 
\end{equation}

Hence, according to Theorem~\ref{Thm F}~$(e)$, the retraction of $\omega$ to $\mathfrak C_r$ for any $r>1, r\neq 3$, 
$\Omega_{\mathfrak C_r}\colon B_{k-2} \to \mathbf{S}(\mathfrak C_r)\cong S_{|\mathfrak C_r |}$, 
is cyclic since $k-2 > 3$ (for $k>8$) and  $|\mathfrak C_r |\leq 3$. It follows from Corollary~\ref{cor cyclic omega}
that the reduction of $\omega$ corresponding to $\mathfrak C_r$ for any $r>1, r\neq 3$ is cyclic. 
We claim that $\Omega_{\mathfrak C_3}$, the retraction of $\omega$ to $\mathfrak C_3$, is non-cyclic. For suppose 
$\Omega_{\mathfrak C_3}$ were cyclic, then according to Corollary~\ref{cor cyclic omega} the reduction of $\omega$ corresponding to
$\mathfrak C_3$, $\omega_{supp(\mathfrak C_3)}$, would be cyclic. Now since
 
$$ supp(\mathfrak C_3) \cup \bigcup_{r>1, r\neq 3}supp(\mathfrak C_r) = \bigcup_{r>1} supp(\mathfrak C_r) = supp(\omega) $$  

this would mean that $\omega|_{supp(\omega)} = \omega$ is cyclic on $B_{k-2}$, i.e., $\omega(\sigma_3) = \cdots = \omega(\sigma_{k-1})$ 
and by lemma~\ref{lemma cyclic homomorphism} this means that $\omega$ itself is cyclic which contradicts our assumption.

Hence we can assume that $\Omega_{\mathfrak C_3}\colon B_{k-2}\to \mathbf{S}(\mathfrak C_3)\cong S_{k-2}$ is non-cyclic. 
According to Theorem~\ref{Thm F}~$(d)$, this means that  $\Omega_{\mathfrak C_3}$ is conjugate to $\mu_{k-2}$, the
canonical epimorphism (see Definition~\ref{def canonical}). Since $\widehat \sigma_1$ and $\widehat \sigma_{k-1}$ are conjugate,
they have the same cycle structure and we can argue as above on the $r$-components of $\widehat \sigma_{k-1}$ to deduce
that $\Omega_{\mathfrak C^*_3}$ is conjugate to $\mu_{k-2}$ as well.

By conjugating $\omega$ with an appropriate permutation we may assume w.l.o.g that $\mathfrak C_3^* = \{ A_1,\ldots,A_{k-2} \}$ 
(see Notation~\ref{notation new A}). Furthermore, by Lemma~\ref{lemma normal}
we can assume that $\Omega_{\mathfrak C_3^*} = \mu_{k-2}$ (and not just $\Omega_{\mathfrak C_3^*} \sim  \mu_{k-2}$)
so that

\begin{equation}\label{12.4}
A_i^{\widehat \sigma_j} = \begin{cases}
A_{i+1} & i=j \\
A_{i-1} & i=j+1 \\
A_i & i\neq j,j+1
\end{cases} \qquad 1 \leq j \leq k-3, \ 1\leq i \leq k-2
\end{equation}

In particular we see that for each $i$, $1\leq i \leq k-3$, $\ \widehat \sigma_i$ {\em induces a transposition} between $A_i$ and $A_{i+1}$, i.e.,

\begin{equation}\label{12.1}
A_i^{\widehat \sigma_i} = A_{i+1} \quad \text{and} \quad A_{i+1}^{\widehat \sigma_i} = A_i.
\end{equation}

Hence, 

\begin{equation}\label{12.2}
supp(A_i)\cup supp(A_{i+1})\in Inv(\widehat \sigma_i).
\end{equation}

Denote $C_i = \widehat \sigma_i|_{supp(A_i)\cup supp(A_{i+1})}$.
Then \eqref{12.1} and \eqref{12.2} mean that each cycle in the cyclic decomposition of $C_i$ 
is included in the cyclic decomposition of $\widehat \sigma_i$ and we can write

\begin{equation}\label{12.3}
A_i^{C_i} = A_{i+1} \quad \text{and} \quad A_{i+1}^{C_i} = A_i.
\end{equation}
  
According to Lemma~\ref{lemma C} and as explained
in Example~\ref{example C}, the cyclic type of $C_i$ must be either $[2,2,2]$ or $[6]$ and so, it is 
disjoint from $\mathfrak C^i_3$. This means that the cyclic type of each $\widehat \sigma_i$ has
only two possibilities~:

\begin{equation}\label{eq cyclic type}
[\underbrace{3,\ldots,3}_{k-2 \ times},2,2,2] \qquad \text{or} \qquad [\underbrace{3,\ldots,3}_{k-2 \ times},6] 
\end{equation}

Furthermore, from \eqref{12.4} we get that

\begin{equation}\label{12.5}
A_i^{\widehat \sigma_j} = A_i \qquad \text{for} \ 1\leq i \leq k-2,\  1\leq j \leq k-3 ,\ i\neq j,j+1 .
\end{equation}

But since $supp(A_i)\subset supp(\widehat \sigma_j)$ for these indices 
we have, by Lemma~\ref{Lemma commuting permutations}~$(b)$, that $\widehat \sigma_j|_{supp(A_i)} = A_i^{\pm 1}$.

So far we have concluded that (up to conjugation) 
$\omega|_{supp(\mathfrak C^*_3)}\colon B^*_{k-2}\to \mathbf{S}(supp(\mathfrak C^*_3))\cong S_{3(k-2)}$ satisfies

\begin{equation}\label{eq mid conclusion}
\widehat \sigma_i|_{supp(\mathfrak C^*_3)} = A_1^{\pm 1}\cdots A^{\pm 1}_{i-1}\cdot C_i \cdot A^{\pm 1}_{i+2}\cdots A^{\pm 1}_{k-2} \qquad \text{for} \ i=1,\ldots,k-3 
\end{equation}

Let us use the following notation~:

\begin{equation}\label{eq notation 3 comp}
\mathfrak C_3^i = \{ D^i_1,\ldots,D^i_{i-1}, D^i_{i+2},\ldots,D^i_{k} \} \qquad 1\leq i \leq k-1 
\end{equation}

where $D^i_j = A_j^{\pm 1}$ for $1\leq j \leq k-2, \ 1\leq i \leq k-3$ (this can be assumed according to \eqref{eq mid conclusion})
and $D^{k-1}_j = A_j$ for $1\leq j \leq k-2$ (this is so by assumption). 





We will prove the rest of our claim in a series of steps, as follows.

{\bf Step 1:} Up to conjugation in $S_{\Delta_{3k} - supp(\mathfrak C^*_3)}$
(a conjugation that does not alter \eqref{eq mid conclusion}), $\ \omega$ satisfies the following

$$ \widehat \sigma_i|_{\Delta_{3k} - supp(\mathfrak C_3^*)} = A_{k-1}\cdot A_k \qquad \text{for} \quad 1\leq i \leq k-3 $$  

and if we write

$$ \widehat \sigma_{k-1} = A_1\cdots A_{k-2} \cdot C_{k-1} $$

then 

$$ A_{k-1}^{C_{k-1}} = A_k \qquad \text{and} \qquad  A_k^{C_{k-1}} = A_{k-1} $$

\vskip 0.2cm

{\bf Proof of Step 1:}

Since $\widehat \sigma_1$ and $\widehat \sigma_{k-1}$ are conjugate, they have the same cycle structure. 
Hence, from \eqref{13.1} we have that $|\mathfrak C^*_r| \leq 3$ for any $r>1, r\neq 3$. In fact, by \eqref{eq cyclic type}
we know that $\mathfrak C^*_r \neq \emptyset$ for $r>1, r\neq 3$ only if $r$ is equal either to $2$ or $6$. Let $s$
be that value such that $s>1, s\neq 3$ and $\mathfrak C^*_s \neq \emptyset$.

Hence, according to Theorem~\ref{Thm F}~$(e)$, the coretraction of $\omega$ to $\mathfrak C^*_s$, 
$\Omega_{\mathfrak C^*_s}\colon B^*_{k-2} \to \mathbf{S}(\mathfrak C^*_s)\cong S_{|\mathfrak C^*_s |}$, 
is cyclic since $k-2 > 3$ (for $k>8$) and  $|\mathfrak C^*_s |\leq 3$. It follows by Corollary~\ref{cor cyclic omega}
that the coreduction of $\omega$ corresponding to $\mathfrak C^*_s$, $\omega_{supp(\mathfrak C^*_s)}$, is cyclic.

Now since 

$$ supp(\mathfrak C^*_s) = \Delta_{3k} - \bigcup_{i=1,\ldots,k-2} supp(A_i) = \Delta_{3k} - supp(\mathfrak C^*_3) $$

and since, by \eqref{eq mid conclusion}, there are $k-4$ $3$-cycles in the cyclic decomposition of $\widehat \sigma_i|_{supp(\mathfrak C_3^*)}$
for $1\leq i \leq k-3$, 
this means that there are two (disjoint) $3$-cycles, $D_{k-1},D_k\in S_{supp(\mathfrak C^*_s)}$, such that

\begin{equation}\label{107.1}
\widehat \sigma_i|_{\Delta_{3k} - supp(\mathfrak C_3^*)} = D_{k-1}\cdot D_k \qquad \text{for} \quad 1\leq i \leq k-3 
\end{equation}

Putting $i=1$ in \eqref{107.1} and using notation~\eqref{eq notation 3 comp} 
we can write in particular $D_{k-1} = D^1_{k-1}$ and $D_k = D^1_k$.

Now again by \eqref{eq mid conclusion} we have that 

$$ (D^1_{i})^{\widehat \sigma_{k-1}|_{supp(\mathfrak C^*_3)}} = D^1_i \qquad \text{for} \quad i=3,\ldots,k-2$$

and if we write

$$ \widehat \sigma_{k-1} = A_1\cdots A_{k-2} \cdot C_{k-1} $$

i.e., $\widehat \sigma_{k-1}|_{supp(C^*_s)} = C_{k-1}$ then since $\Omega_{\mathfrak C_3}\sim \mu_{k-2}$
we necessarily have that

$$ (D^1_{k-1})^{C_{k-1}} = D^1_k \qquad \text{and} \qquad (D^1_k)^{C_{k-1}} = D^1_{k-1} $$

Finally, by conjugating $\omega$ with an appropriate permutation in $S_{\Delta_{3k} - supp(\mathfrak C^*_3)} = S_{supp(\mathfrak C_s)}$
we can assume that $D_{k-1} = A_{k-1}$ and $D_k = A_k$. In terms of notation~\eqref{eq notation 3 comp} this means
that $D^i_{k-1} = A_{k-1}$ and $D^i_k = A_k$ for each $i=1,\ldots,k-3$.

\hfill $\square$

{\bf Step 2:} There are only two possibilities~: Either

$$ D^1_j = A_j \qquad \text{for} \ j=3,\ldots,k-2 $$

or

$$ D^1_j = A^{-1}_j \qquad \text{for} \ j=3,\ldots,k-2 $$

(see the notation in \eqref{eq notation 3 comp})
\vskip 0.2cm

{\bf Proof of Step 2:}

Using the notation in \eqref{eq notation 3 comp} and using step 1 and \eqref{eq mid conclusion} we have that

$$ \widehat \sigma_i = D^i_1 \cdots D^i_{i-1} \cdot C_i \cdot D^i_{i+2}\cdots D^i_{k-2} A_{k-1} A_k \qquad \text{for} \quad i=1,\ldots,k-3 $$

where $D^i_j = A_j^{\pm 1}$. Hence we have by \eqref{12.3}

$$ (D^1_i)^{\widehat \sigma_i} = (D^1_i)^{C_i} = D^1_{i+1} \qquad \text{for} \quad i=3,\ldots,k-3 $$

But according to \eqref{12.3}

$$ A_i^{C_i} = A_{i+1}  \qquad \text{for} \quad i=3,\ldots,k-3 $$

which implies that 

$$ (A^{-1}_i)^{C_i} = A^{-1}_{i+1}  \qquad \text{for} \quad i=3,\ldots,k-3 $$

Hence, for each $i=3,\ldots,k-3$, we have that $D^1_i = A_i$ iff $D^1_{i+1} = A_{i+1}$.
And so, either $D^1_i = A_i$ for all $i=3,\ldots,k-3$ or $D^1_i = A^{-1}_i$ for all $i=3,\ldots,k-3$.

\hfill $\square$

{\bf Step 3:} For each $i$, $1\leq i \leq k-3$, we have that (using the notation in \eqref{eq notation 3 comp})

$$ D^i_j \in \mathfrak C^i_3, \ D^{i+1}_j \in \mathfrak C^{i+1}_3 \quad \text{implies that}   $$

$$ D^i_j = A_j \qquad \text{iff} \qquad D^{i+1}_j = A_j $$

For any $j$, $1\leq j \leq k-3$.

\vskip 0.2cm

{\bf Proof of Step 3:}

Suppose $D^i_j \in \mathfrak C^i_3, \ D^{i+1}_j \in \mathfrak C^{i+1}_3$ for some $i$, $1\leq i \leq k-3$.
Recall (see \eqref{eq notation 3 comp}) that $D^i_j, D^{i+1}_j = A^{\pm 1}_j$. 

Since $D^i_j$ is a cycle in the cyclic decomposition of $\widehat \sigma_i$ for each $j\neq i,i+1$, we have that

\begin{equation}\label{14.3}
(D^i_j)^{\widehat \sigma_{i+1} \widehat \sigma_i} \in \mathfrak C^{i+1}_3 
\end{equation}

But since 

\begin{equation}\label{14.2}
supp(D^i_j) = supp(D^{i+1}_j) \in Inv(\widehat \sigma_i) \cap Inv(\widehat \sigma_{i+1}) 
\end{equation}

we have that

\begin{equation}\label{14.4}
(D^i_j)^{\widehat \sigma_{i+1} \widehat \sigma_i} = (D^i_j)^{D^{i+1}_j D^i_j} = D^{i+1}_j
\end{equation}

Now suppose $D^i_j=A_j$, then 

$$ (D^i_j)^{D^{i+1}_j D^i_j} = A_j^{A^{\pm 1}_j A_j} = A_j  $$ 

which means by \eqref{14.4} that $D^{i+1}_j = A_j$. Similarly, $D^i_j = A^{-1}_j$ implies that 
$D^{i+1}_j = A^{-1}_j$.

\hfill $\square$

{\bf Step 4:} For each $i$, $1 \leq i \leq k-4$, we have 

$$ D^i_{i+2} = A_{i+2} \qquad \text{iff} \qquad D^{i+1}_i = A_i $$

\vskip 0.2cm

{\bf Proof of Step 4:}

Rewriting \eqref{eq mid conclusion} in terms of the notation in \eqref{eq notation 3 comp} we have

\begin{equation}\label{14.5}
\widehat \sigma_i|_{supp(\mathfrak C^*_3)} = D^i_1 \cdots D^i_{i-1}\cdot C_i \cdot D^i_{i+2}\cdots D^i_{k-2} \qquad \text{for} \ i=1,\ldots,k-3 
\end{equation}

where $D^i_j = A^{\pm 1}_j$ for any $j$, $1\leq j \leq k-2$. 

Since $D^i_j$ is a cycle in the cyclic decomposition of $\widehat \sigma_i$ for each $i$, we have that

\begin{equation}\label{14.6}
(D^i_{i+2})^{\widehat \sigma_{i+1} \widehat \sigma_i} \in \mathfrak C^{i+1}_3 \qquad 1\leq i \leq k-3 
\end{equation}

But since

$$ supp(D^i_{i+2}) \cup supp(C_i) = supp(D^{i+1}_i) \cup supp(C_{i+1}) \in Inv(\{ \widehat \sigma_i, \widehat \sigma_{i+1}\}) $$

and since the cyclic type of each $C_i$ is either $[2,2,2]$ or $[6]$ (and in any case not $[3,3]$, see \eqref{eq cyclic type}), 
\eqref{14.6} implies

\begin{equation}\label{14.7}
(D^i_{i+2})^{\widehat \sigma_{i+1} \widehat \sigma_i} = (D^i_{i+2})^{D^{i+1}_i C_{i+1} D^i_{i+2} C_i} = D^{i+1}_i  
\end{equation}

Now assume that $D^i_{i+2} = A_{i+2}$, then

$$ (D^i_{i+2})^{D^{i+1}_i C_{i+1} D^i_{i+2} C_i} = (A_{i+2})^{A^{\pm 1}_i C_{i+1} A_{i+2} C_i } = A_i $$

where we used \eqref{12.3} in the last equality. So \eqref{14.7} implies that $D^{i+1}_i = A_i$ in this case. 
Similarly, $D^i_{i+2} = A^{-1}_{i+2}$ implies that $D^{i+1}_i = A^{-1}_i$.

\hfill $\square$

{\bf Step 5:} There are only two possibilities~: Either 

$$ D^i_j = A_j \qquad \text{for all} \qquad 1\leq i \leq k-3, \ 1\leq j \leq k-2, \ j\neq i,i+1 $$

or

$$ D^i_j = A^{-1}_j \qquad \text{for all} \qquad 1\leq i \leq k-3, \ 1\leq j \leq k-2, \ j\neq i,i+1 $$

\vskip 0.2cm

{\bf Proof of Step 5:}

Rewriting \eqref{eq mid conclusion} in terms of the notation in \eqref{eq notation 3 comp} we have

\begin{equation}\label{15.1}
\widehat \sigma_i|_{supp(\mathfrak C^*_3)} = D^i_1 \cdots D^i_{i-1}\cdot C_i \cdot D^i_{i+2}\cdots D^i_{k-2} \qquad \text{for} \ i=1,\ldots,k-3 
\end{equation}

Let $D^i_j \in \mathfrak C^i_3$ for some $i$, $1\leq i \leq k-3$. By \eqref{15.1} there are two possibilities~: Either  $j \geq i+2$
or $j \leq i-1$. Assume first that $j\geq i+2$.

According to step 3 and by \eqref{15.1} we have that 

\begin{equation}\label{15.2}
D^i_j = A_j \quad \text{iff} \quad D^{i-1}_j = A_j \quad \text{iff} \quad \cdots  \quad \text{iff} \quad D^1_j = A_j 
\end{equation}

Now assume that $j \leq i-1$.

According to step 3 and \eqref{15.1} again we have that

\begin{equation}\label{15.4}
D^i_j = A_j \quad \text{iff} \quad D^{i-1}_j = A_j \quad \text{iff} \quad \cdots  \quad \text{iff} \quad D^{j+1}_j = A_j 
\end{equation}

By step 4 we have that 

\begin{equation}\label{15.5}
D^{j+1}_j = A_j \quad \text{iff} \quad D^j_{j+2} = A_{j+2} 
\end{equation}

and again by step 3

\begin{equation}\label{15.6}
D^j_{j+2} = A_{j+2} \quad \text{iff} \quad D^{j-1}_{j+2} = A_{j+2} \quad \text{iff} \quad \cdots  \quad \text{iff} \quad D^1_{j+2} = A_{j+2} 
\end{equation}

Summing up, we have by \eqref{15.2} that for each $i$, $1\leq i \leq k-3$

\begin{equation}\label{15.8}
D^i_j = A_j \quad \text{iff} \quad D^1_j = A_j \qquad \text{for any} \quad j\geq i+2 
\end{equation}

and by \eqref{15.4}, \eqref{15.5} and \eqref{15.6}

\begin{equation}\label{15.9}
D^i_j = A_j \quad \text{iff} \quad D^1_{j+2} = A_{j+2} \qquad \text{for any} \quad j\leq i-1 
\end{equation}

But according to step 2

\begin{equation}\label{15.7}
D^1_j = A_j \quad \text{iff} \quad D^1_3 = A_3  \qquad \text{for any} \quad 3\leq j\leq k-2
\end{equation}

Hence \eqref{15.8}, \eqref{15.9} and \eqref{15.7} imply that

$$ D^i_j = A_j \quad \text{iff} \quad D^1_3 = A_3 \qquad \text{for any} \quad 1\leq i \leq k-3, \quad 1\leq j\leq k-2, j\neq i,i+1 $$

Since $D^i_j = A_j^{\pm 1}$ this implies the claim.

\hfill $\square$

{\bf Step 6:}  

$$ D^i_j = A_j \qquad \text{for all} \qquad 1\leq i \leq k-3, \ 1\leq j \leq k-2, \ j\neq i,i+1 $$

and 

$$ \widehat \sigma_i|_{\Delta_{3k} - supp(\mathfrak C_3)} = A_1 \cdot A_2 \qquad \text{for} \quad 3\leq i \leq k-1 $$  

\vskip 0.2cm

{\bf Proof of Step 6:}

From \eqref{13.1} we have that $|\mathfrak C_r| \leq 3$ for any $r>1, r\neq 3$. In fact, by \eqref{eq cyclic type}
we know that $\mathfrak C_r \neq \emptyset$ for $r>1, r\neq 3$ only if $r$ is equal either to $2$ or $6$. Let $s$
be that value such that $s>1, s\neq 3$ and $\mathfrak C_s \neq \emptyset$.

Hence, according to Theorem~\ref{Thm F}~$(e)$, the retraction of $\omega$ to $\mathfrak C_s$, 
$\Omega_{\mathfrak C_s}\colon B_{k-2} \to \mathbf{S}(\mathfrak C_s)\cong S_{|\mathfrak C_s |}$, 
is cyclic since $k-2 > 3$ (for $k>8$) and  $|\mathfrak C_s |\leq 3$. It follows from Corollary~\ref{cor cyclic omega}
that the reduction of $\omega$ corresponding to $\mathfrak C_s$, $\omega_{supp(\mathfrak C_s)}$, is cyclic.

Now since 

$$ supp(\mathfrak C_s) = \Delta_{3k} - \bigcup_{i=3,\ldots,k} supp(A_i) = \Delta_{3k} - supp(\mathfrak C_3) $$

and since, by \eqref{eq mid conclusion}, there are $k-4$ $3$-cycles in the cyclic decomposition of $\widehat \sigma_i|_{supp(\mathfrak C_3)}$
for $1\leq i \leq k-3$ and hence also for $i=k-2$ and $i=k-1$, 
this means that there are two (disjoint) $3$-cycles, $D_1,D_2\in S_{supp(\mathfrak C_s)}$, such that

$$ \widehat \sigma_i|_{\Delta_{3k} - supp(\mathfrak C_3)} = D_1\cdot D_2 \qquad \text{for} \quad 3\leq i \leq k-1 $$  

But we already have that 

$$ \widehat \sigma_{k-1}|_{\Delta_{3k} - supp(\mathfrak C_3)} = A_1 \cdot A_2 $$

Hence

$$ \widehat \sigma_i|_{\Delta_{3k} - supp(\mathfrak C_3)} = A_1\cdot A_2 \qquad \text{for} \quad 3\leq i \leq k-1 $$  

In particular, this means that $D^{k-3}_1 = A_1$ and by step 5 this implies that

$$ D^i_j = A_j \qquad \text{for all} \qquad 1\leq i \leq k-3, \ 1\leq j \leq k-2, \ j\neq i,i+1 $$

\hfill $\square$

By combining \eqref{eq mid conclusion}, 
step 1 and step 6 we have established that up to conjugation, $\omega$ satisfies

\begin{equation}\label{16.1}
\widehat \sigma_i = A_1\cdots A_{i-1}\cdot C_i \cdot A_{i+2}\cdots A_k \qquad \text{for} \ i=1,\ldots,k-3 \ \text{and} \ i=k-1
\end{equation}

where

\begin{equation}\label{16.2}
A_i^{C_i} = A_{i+1} \quad \text{and} \quad A_{i+1}^{C_i} = A_i \qquad \text{for} \ i=1,\ldots,k-3 \ \text{and} \ i=k-1
\end{equation}

It is now left to determine $\widehat \sigma_{k-2}$ which we do in the next step.

{\bf Step 7:}  Up to conjugation that does not alter \eqref{16.1} we can assume that $\omega$ 
satisfies

$$ \widehat \sigma_{k-2} = A_1\cdots A_{k-3} C_{k-2} A_k $$

where

$$ (A_{k-2})^{C_{k-2}} = A_{k-1} \qquad \text{and} \qquad (A_{k-1})^{C_{k-2}} = A_{k-2} $$

\vskip 0.2cm

{\bf Proof of Step 7:}

First, by step 6, we already know that

$$ \widehat \sigma_{k-2}|_{\Delta_{3k} - supp(\mathfrak C_3)} = A_1\cdot A_2 $$

Let us now determine $\widehat \sigma_{k-2}|_{supp(\mathfrak C_3)}$.

By \eqref{16.1} and \eqref{16.2}, we have that for any $i$, $1\leq i \leq k-3$ or $i=k-1$ and 
any $D^1_j \in \mathfrak C_3$ where $3\leq j \leq k-1$ and $D^1_j = A_j$

\begin{equation}\label{17.1}
(D^1_j)^{\widehat \sigma_i} = (A_j)^{A_1\cdots A_{i-1}\cdot C_i \cdot A_{i+2}\cdots A_k} = (A_j)^{C_i} = \begin{cases}
A_{i+1} & j=i\\ A_i & j=i+1 \\ A_j & j\neq i,i+1 \end{cases} = \begin{cases}
D^1_{i+1} & j=i\\ D^1_i & j=i+1 \\ D^1_j & j\neq i,i+1 \end{cases} 
\end{equation}

By identifying $D^1_i$ with $i-2\in \Delta_{k-2}$ for $i=3,\ldots,k$, \eqref{17.1} implies that 

$$\Omega_{\mathfrak C_3}(\sigma_i) = (i,i+1)\in \mathbf{S}(\mathfrak C_3)\cong S_{k-2} \qquad \text{for} \quad i=3,\ldots,k-3 
\quad \text{and} \quad i=k-1.$$ 

Now since $\Omega_{\mathfrak C_3}\sim \mu_{k-2}$ this means, by Lemma~\ref{lemma missing mu}, that there
are only two possibilities~: 

$$ \Omega_{\mathfrak C_3}(\sigma_{k-2}) = (k-2,k-1) \quad \text{or} \quad (k-2,k) $$

Assume first that $\Omega_{\mathfrak C_3}(\sigma_{k-2}) = (k-2,k-1)$. This means that 
for any $D^1_j \in \mathfrak C_3$ where $3\leq j \leq k$

\begin{equation}\label{17.2}
(D^1_j)^{\widehat \sigma_{k-2}|_{supp(\mathfrak C_3)}} = \begin{cases}
D^1_{k-1} & j=k-2\\ D^1_{k-2} & j=k-1 \\ D^1_j & j\neq k-2,k-1 \end{cases}  
\end{equation}

We see that for any $j\neq k-2,k-1$, $D^1_j$ commutes with $\widehat \sigma_{k-2}|_{supp(\mathfrak C_3)}$.
Since $supp(D^1_j)\subset supp(\mathfrak C_3)$ and since $D^1_j$ is a $3$-cycle, 
we have by Lemma~\ref{Lemma commuting permutations}~$(b)$ that

$$ \widehat \sigma_{k-2}|_{supp(D^1_j)} = (D^1_j)^{\pm 1} = A^{\pm 1}_j \qquad j\neq k-2,k-1 $$

But since $A_j\in \mathfrak C_3^*$ for each $j=3,\ldots,k-3$ and since $A_k\in \mathfrak C_3^{k-3}$, we can
use step 3 to conclude that

\begin{equation}\label{17.3}
\widehat \sigma_{k-2}|_{supp(D^1_j)} = (D^1_j) = A_j \qquad j\neq k-2,k-1 
\end{equation}

Furthermore, \eqref{17.2} implies that 

$$ supp(D^1_{k-2}) \cup supp(D^1_{k-1}) = supp(A_{k-2}) \cup supp(A_{k-1}) \in Inv(\widehat \sigma_{k-2}) $$

so we can denote $C_{k-2} = \widehat \sigma_{k-2}|_{supp(D^1_{k-2}) \cup supp(D^1_{k-1})}$ and by \eqref{17.2} we have

$$ (D^1_{k-2})^{C_{k-2}} = D^1_{k-1} \qquad \text{and} \qquad (D^1_{k-1})^{C_{k-2}} = D^1_{k-2} $$

or 

\begin{equation}\label{17.4}
(A_{k-2})^{C_{k-2}} = A_{k-1} \qquad \text{and} \qquad (A_{k-1})^{C_{k-2}} = A_{k-2}
\end{equation}

Combining \eqref{17.3} and \eqref{17.4} we have that

$$ \widehat \sigma_{k-2} = A_1\cdots A_{k-3} C_{k-2} A_k $$

and the claim follows in this case.

Now assume that $\Omega_{\mathfrak C_3}(\sigma_{k-2}) = (k-2,k)$. Then in this case we have that 
for any $D^1_j \in \mathfrak C_3$ where $3\leq j \leq k-1$

\begin{equation}\label{17.5}
(D^1_j)^{\widehat \sigma_{k-2}|_{supp(\mathfrak C_3)}} = \begin{cases}
D^1_k & j=k-2\\ D^1_{k-2} & j=k \\ D^1_j & j\neq k-2,k \end{cases}  
\end{equation}

By arguing in a completely similar way to the case $\Omega_{\mathfrak C_3}(\sigma_{k-2}) = (k-2,k-1)$ we can
conclude that

$$ \widehat \sigma_{k-2} = A_1\cdots A_{k-3} A_{k-1} C'_{k-2} $$

where $supp(C'_{k-2}) = supp(A_{k-2}) \cup supp(A_k)$ and

\begin{equation}\label{17.6}
(A_{k-2})^{C'_{k-2}} = A_k \qquad \text{and} \qquad (A_k)^{C'_{k-2}} = A_{k-2} 
\end{equation}

Recall that

\begin{equation}\label{17.7}
(A_{k-1})^{C_{k-1}} = A_k \qquad \text{and} \qquad (A_k)^{C_{k-1}} = A_{k-1} 
\end{equation}

Furthermore, since $supp(C_{k-1}) = supp(A_{k-1}) \cup supp(A_k)$, then

\begin{equation}\label{17.8}
(A_j)^{C_{k-1}} = A_j \qquad \text{for any} \qquad j\neq k-1,k 
\end{equation}

and since $supp(C'_{k-2}) = supp(A_{k-2}) \cup supp(A_k)$, then

\begin{equation}\label{17.9}
(A_j)^{C'_{k-2}} = A_j \qquad \text{for any} \qquad j\neq k-2,k 
\end{equation}

Let us now conjugate $\omega$ by $C_{k-1}$. By \eqref{16.1} and \eqref{16.2} we have for any $i$, $1\leq i \leq k-3$

$$ (\widehat \sigma_i)^{C_{k-1}} = (A_1\cdots A_{i-1}\cdot C_i \cdot A_{i+2}\cdots A_k)^{C_{k-1}} = $$

$$ = A_1^{C_{k-1}}\cdots A_{i-1}^{C_{k-1}}\cdot C_i^{C_{k-1}} \cdot A_{i+2}^{C_{k-1}}\cdots A_k^{C_{k-1}} =  $$

$$ = A_1\cdots A_{i-1}\cdot C_i \cdot A_{i+2}\cdots A_k = \widehat \sigma_i $$

The last equation follows from the fact that $supp(C_i) = supp(A_i) \cup supp(A_{i+1})$ for $1\leq i \leq k-3$
is disjoint from $supp(C_{k-1}) = supp(A_{k-1}) \cup supp(A_k)$.

For $i=k-1$ we have

$$ (\widehat \sigma_{k-1})^{C_{k-1}} = (A_1 \cdots A_{k-2} \cdot C_{k-1})^{C_{k-1}} =  $$

$$ = A_1^{C_{k-1}} \cdots A_{k-2}^{C_{k-1}} \cdot C_{k-1}^{C_{k-1}} = A_1 \cdots A_{k-2} \cdot C_{k-1} = \widehat \sigma_{k-1} $$

And finally, for $i=k-2$ we have

$$ (\widehat \sigma_{k-2})^{C_{k-1}} = (A_1\cdots A_{k-3} A_{k-1} C'_{k-2})^{C_{k-1}} =  $$

$$ A_1^{C_{k-1}}\cdots A_{k-3}^{C_{k-1}} A_{k-1}^{C_{k-1}} (C'_{k-2})^{C_{k-1}} = $$

\begin{equation}\label{17.10}
= A_1\cdots A_{k-3} A_k (C'_{k-2})^{C_{k-1}}
\end{equation}

Denote $C_{k-2} = (C'_{k-2})^{C_{k-1}}$. Now \eqref{17.10} implies that $supp(C_{k-2}) = supp(A_{k-2}) \cup supp(A_{k-1})$. Furthermore

$$ (A_{k-2})^{C_{k-2}} = (A_{k-2})^{C^{-1}_{k-1} C'_{k-2} C_{k-1}} = (A_{k-2})^{C'_{k-2} C_{k-1}} = (A_k)^{C_{k-1}} = A_{k-1}$$

and

$$ (A_{k-1})^{C_{k-2}} = (A_{k-1})^{C^{-1}_{k-1} C'_{k-2} C_{k-1}} = (A_k)^{C'_{k-2} C_{k-1}} = (A_{k-2})^{C_{k-1}} = A_{k-2}$$

Summing up, we have shown that conjugating $\omega$ by $C_{k-1}$ does not alter \eqref{16.1}, and $\widehat \sigma_{k-2}$ satisfies

$$ \widehat \sigma_{k-2} = A_1\cdots A_{k-3} C_{k-2} A_k $$

where

$$ (A_{k-2})^{C_{k-2}} = A_{k-1} \qquad \text{and} \qquad (A_{k-1})^{C_{k-2}} = A_{k-2} $$

\hfill $\square$

Combining \eqref{16.1} and step 7 we have established that, up to conjugation, $\omega$ satisfies

\begin{equation}\label{18.1}
\widehat \sigma_i = A_1\cdots A_{i-1}\cdot C_i \cdot A_{i+2}\cdots A_k \qquad \text{for} \ i=1,\ldots,k-1 
\end{equation}

where

\begin{equation}\label{18.2}
A_i^{C_i} = A_{i+1} \quad \text{and} \quad A_{i+1}^{C_i} = A_i \qquad \text{for} \ i=1,\ldots,k-1 
\end{equation}

Using Notation~\ref{notation C} we can write

$$ C_i = C^{(i,i+1),3}_{t^i_1,t^i_2} \qquad \text{for each} \quad i=1,\ldots,k-1 $$

where $t^i_j \in \mathbb \bigtriangledown_3$.

Since $\omega$ is a homomorphism, we have according to Lemma~\ref{lemma prep phi} (in the case $l=1$, $m=3$ in which case
$\psi_l(\sigma_i) = \psi_1(\sigma_i) = (i,i+1)$) that 

\begin{equation}\label{18.3}
C_i \infty C_{i+1} \qquad \text{for} \quad i=1,\ldots,k-2.
\end{equation}

or equivalently, using Notation~\ref{notation C}

\begin{equation}\label{18.4}
C^{(i,i+1),3}_{t^i_1,t^i_2} \ \infty \ C^{(i+1,i+2),3}_{t^{i+1}_1,t^{i+1}_2} \qquad \text{for} \quad i=1,\ldots,k-2
\end{equation}

According to Lemma~\ref{lemma condition C_i}, \eqref{18.4} is equivalent to

$$ \quad t^i_i + t^i_{i+1} \equiv t^{i+1}_{i+2} + t^{i+1}_{i+1} (\text{mod} \ 3) \qquad \text{for} \quad i=1,\ldots,k-2 $$

which is condition \eqref{eq condition t} in Definition~\ref{Def B_k S_mk} for $m=3$ and $l=1$. Thus we see
that \eqref{18.1} is a model homomorphism given by \eqref{eq model phi} in Definition~\ref{Def B_k S_mk}
for the case $m=3$ and $l=1$. 

We have shown that any good non-cyclic and transitive homomorphism $\omega\colon B_k \to S_{3k}$ 
is conjugate to a model homomorphism, hence each such homomorphism is standard.

\end{proof}



\section{An Updated Conjecture}\label{section conjecture}

It is already known (see Theorem~\ref{Thm F}~$(a)$ through $(e)$) that any non-cyclic transitive homomorphism $B_k\to S_n$
is good for $n \leq 2k $, $k>8$. Our first conjecture then is the following

\begin{conj}\label{conj good}
Every non-cyclic transitive homomorphism $B_k\to S_n$ for $n > 2k$, $k>8$,  is good (see Definition~\ref{def good}).\hfill $\bullet$
\end{conj}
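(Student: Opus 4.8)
The plan is to attempt the conjecture by a descent on $k$, driving everything through Lin's retraction machinery from Section~\ref{sec defs} together with the classification results collected in Theorem~\ref{Thm F} and Artin's theorem. Fix a non-cyclic transitive $\omega\colon B_k\to S_n$ with $n>2k$ and $k>8$. By Lemma~\ref{Lemma k-3 bound} non-cyclicity forces $\widehat\sigma_1$ to carry a component of length $\ge k-2$; after peeling off the edge case in which only the degenerate component $Fix(\widehat\sigma_1)$ is large (handled by passing to the coretraction picture for $\widehat\sigma_{k-1}$, which has the same cycle type since $\widehat\sigma_1\sim\widehat\sigma_{k-1}$, and using $\bigcup_i supp(\widehat\sigma_i)=\Delta_n$ from transitivity), fix a non-degenerate $r$-component $\mathfrak C_r$ with $\ell:=|\mathfrak C_r|\ge k-2$. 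First I would clear away the ``everything cyclic'' regime: if for every $r\ge 2$ every retraction $\Omega_{\mathfrak C_r}\colon B_{k-2}\to{\mathbf S}(\mathfrak C_r)\cong S_\ell$ is cyclic, then Corollary~\ref{cor cyclic omega} makes each reduction $\omega_{supp(\mathfrak C_r)}$ cyclic, so $\widehat\sigma_3,\dots,\widehat\sigma_{k-1}$ agree on $supp(\widehat\sigma_1)$; the mirror argument via coretractions (Proposition~\ref{prop commut retract}, Definition~\ref{def retractions}) gives agreement of $\widehat\sigma_1,\dots,\widehat\sigma_{k-3}$ on $supp(\widehat\sigma_{k-1})$, and patching the two over the shared generators $\sigma_3,\dots,\sigma_{k-3}$ — exactly the move used in the $S_{3k}$ theorem above — collapses $\omega$ to a cyclic homomorphism on a full-support $\widehat\sigma_1$, contradicting the hypothesis (the case $Fix(\widehat\sigma_1)\ne\emptyset$ being absorbed into the type~$1$ count below). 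So some $\mathfrak C_r$ has a non-cyclic retraction.

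Next I would analyse that non-cyclic $\Omega_{\mathfrak C_r}\colon B_{k-2}\to S_\ell$ according to $\ell$. Here Theorem~\ref{Thm F}$(e)$ forces $\ell\ge k-2$; if $k-2\le\ell<2(k-2)$ then Theorem~\ref{Thm F}$(c),(d)$ and Artin's theorem give $\Omega_{\mathfrak C_r}\sim\mu_{k-2}$ or $\mu_{k-2}\times\kappa$ with $\kappa$ cyclic (see Definition~\ref{def canonical}); if $\ell=2(k-2)$ then Theorem~\ref{Thm F}$(b)$ makes it standard; and if $\ell>2(k-2)$ then $\omega_{supp(\mathfrak C_r)}\colon B_{k-2}\to S_{r\ell}$ is non-cyclic and the descent hypothesis applies to each of its orbits. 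In every case the conclusion is the same local statement: there is a run of consecutive indices on which $\widehat\sigma_i$ and $\widehat\sigma_{i+1}$ induce a transposition of the cycles of $\mathfrak C_r$, so Lemma~\ref{Lemma commuting permutations}$(b)$ pins $\widehat\sigma_j$ to a power of any cycle of $\mathfrak C_r$ it does not move, and Lemma~\ref{lemma normal} lets us normalise $\widehat\sigma_1$ to the canonical shape of Definition~\ref{Def B_k S_mk} on the large part. Feeding this into the bookkeeping lemmas, Lemma~\ref{lemma supp bound} and Corollary~\ref{corollary lemma good} — which tie $supp(\widehat\sigma_i)\cap supp(\widehat\sigma_{i\pm1})$ to the cycle lengths — recovers, restricted to the cycles of $\widehat\sigma_1$, precisely the type~$1$ or type~$2$ pattern of Definition~\ref{def good}.

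The hard part, which I expect to be the real obstacle, is promoting this component-local picture to all of $\Delta_n$ and excluding ``partially glued'' configurations: one must show $supp(\widehat\sigma_1)\cap supp(\widehat\sigma_3)$ is either empty or all of $supp(\widehat\sigma_1)$, never something in between. This intermediate behaviour is invisible to the inductive data, since the retraction sees only $\sigma_3,\dots,\sigma_{k-1}$ and the coretraction only $\sigma_1,\dots,\sigma_{k-3}$, leaving the four boundary generators $\sigma_1,\sigma_2,\sigma_{k-2},\sigma_{k-1}$ and the placement of a $B_{k-2}$-orbit inside the transitive $B_k$-action uncontrolled; the descent is also delicate near $k=9,10$, where one cannot step down to $k-2>8$, and when $\ell$ is unbounded, so those must be treated as genuine base cases. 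I would attack the gluing step by a global count in the spirit of Lemma~\ref{lemma good}: combine the block structure just extracted with the inequalities $intersect(\omega)\ge\frac12 supp(\omega)$ (Corollary~\ref{corollary lemma good}) and $\bigl|\bigcup_{i=1}^{k-1}supp(\widehat\sigma_i)\bigr|=n$, and show that any intermediate overlap forces $n\le 2k$, contradicting the hypothesis — this is where the hypotheses $n>2k$ and $k>8$ are meant to bite. With the base cases $n=2k$ (Theorem~\ref{Thm F}$(b)$, since standard homomorphisms are visibly good) and small $k$ settled by the quoted classifications, this would complete the descent.
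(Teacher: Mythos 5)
The statement you are trying to prove is Conjecture~\ref{conj good}: the paper does not prove it, and indeed states it as a conjecture precisely because no proof is known — the paper's actual theorems only show that \emph{good} transitive non-cyclic homomorphisms of the two types are standard (Theorem~\ref{proposition 2m} and the $S_{3k}$ theorem), which is logically downstream of goodness, not a route to it. So there is no proof in the paper to compare yours against, and a correct argument here would be a genuinely new result rather than a reconstruction.

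As it stands, your proposal is a strategy sketch with the decisive step missing, and you say so yourself: showing that $supp(\widehat\sigma_1)\cap supp(\widehat\sigma_3)$ is either empty or all of $supp(\widehat\sigma_1)$ \emph{is} the conjecture (it is exactly the dichotomy between types $1$ and $2$ of Definition~\ref{def good}), and deferring it to "a global count in the spirit of Lemma~\ref{lemma good}" does not close the gap. That count cannot work as described: Corollary~\ref{corollary lemma good} and transitivity only give the inequalities $intersect(\omega)\ge\frac12 supp(\omega)$ and $\bigl|\bigcup_i supp(\widehat\sigma_i)\bigr|=n$, while the identity $\bigl|\bigcup_i supp(\widehat\sigma_i)\bigr|=(k-1)supp(\omega)-(k-2)intersect(\omega)$ used in Lemma~\ref{lemma good} is derived \emph{from} type~$1$ goodness (disjointness of $supp(\widehat\sigma_i)$ and $supp(\widehat\sigma_j)$ for $|i-j|>1$), so invoking it to rule out partial overlaps is circular; a hypothetical non-good $\omega$ with intermediate overlaps violates none of the inequalities you have available. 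The descent has further unrepaired holes that you flag but do not resolve: the (co)retractions see only $B_{k-2}$, hence say nothing about the boundary generators $\sigma_1,\sigma_2,\sigma_{k-2},\sigma_{k-1}$ where non-goodness can hide; the inductive hypothesis is unavailable for $k=9,10$ (since $k-2\le 8$) and, when $\ell>2(k-2)$, goodness of the orbit reductions of $\omega_{supp(\mathfrak C_r)}$ as homomorphisms of $B_{k-2}$ does not transfer to goodness of $\omega$ itself. The opening reduction is also shakier than stated: Lemma~\ref{Lemma k-3 bound} allows the large component to be the degenerate one, and your plan to "absorb" that case is not an argument. In short, the skeleton (retraction/coretraction plus Theorem~\ref{Thm F} and Lemma~\ref{Lemma k-3 bound}) is the natural toolkit, but the core gluing problem is untouched, so this is not a proof of the conjecture.
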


It was conjectured in \cite{MaSu05} that there is no non-cyclic transitive homomorphism $B_k\to S_n$ 
for $k\not | \ n$ but as we shall see in Example~\ref{example conj} there are an infinite number
of refutations for this conjecture. First we define

\begin{definition}\label{def refute conj}
Let $2\leq k \in \mathbb N$, $1 \leq m \in \mathbb N$ and let $H$ be an index $n$ subgroup of $S_{mk}$
where $k\not | \ n$. Let $\varphi$ be one of the standard homomorphisms (Definition~\ref{Def B_k S_mk}) if $m\geq 2$ or
the canonical homomorphism (Definition~\ref{def canonical}) if $m=1$. Then the homomorphism $f\colon B_k\to S_n$ which
sends $\sigma_i \in B_k$ to the permutation in $S_n$ that $\varphi(\sigma_i)$ induces on the $n$ cosets of $H$ in $S_{mk}$
by left multiplication is said to be \textbf{derived from $\mathbf{\varphi}$ using $\mathbf{H}$}. In general, such a homomorphism $f$ is 
said to be \textbf{derived from a canonical or a standard homomorphism}. \hfill $\bullet$
\end{definition}

Note that a homomorphism $B_k \to S_n$ derived from a canonical or a standard homomorphism is not necessarily transitive nor is it necessarily non-cyclic
as the images of the standard homomorphisms do not generate $S_{mk}$ in general. However, we do have

\begin{exa}\label{example conj}
In Definition~\ref{def refute conj}, we take $m=1$, 

$$H = \mathbf{S}(\{1,2\}) \times \mathbf{S}(\{3,\ldots,k\}) \cong  S_2\times S_{k-2}$$ 

as a $\frac{k(k-1)}{2} $ index subgroup of $S_k$. Consider the homomorphism 
$f\colon B_k \to S_{\frac{k(k-1)}{2}}$ which
is derived from $\mu_k\colon B_k\to S_k$ using 
$H$ (see definition~\ref{def canonical}). Then

\textbf{f is transitive : } First note that $\mu_k$ is surjective : $\mu_k(\sigma_i) = (i,i+1)$ which generate $S_k$ for $i=1,\ldots,k-1$.
Hence, for any $g_1, g_2 \in S_k$ there exists an element $g\in B_k$ such that $\mu_k(g) = g_1 g_2^{-1}$. So, 
for any two cosets of $H$ in $S_k$, say $g_1H$ and $g_2H$, we have that

$$ g_1H = (g_1 g_2^{-1}) g_2H = \mu_k(g) g_2H $$

which means, by definition of $f$, that $f$ is transitive.

\textbf{f is non-cyclic : } If $f$ were cyclic then according to Lemma~\ref{lemma cyclic homomorphism} 
and the definition of $f$, we would have that for each $i=1,\ldots,k-1$, $\mu_k(\sigma_i) = (i,i+1)$ would induce the same
permutation on the cosets of $H$ in $S_k$ by multiplication on the left. However, since $\mu_k(\sigma_1)=(1,2)\in H$ then $(1,2)$
sends the coset $H$ to itself while the element $\mu_k(\sigma_2)=(2,3)\not \in H$ sends the coset $H$ to $(2,3)H \neq H$. Hence
$f$ is non-cyclic.

Finally, note that if $k$ is even then $k \nmid  \frac{k(k-1)}{2}$, and so, for any even $k$, this example indeed refutes the conjecture
made in \cite{MaSu05} that there is no non-cyclic transitive homomorphism $B_k\to S_n$ 
for $k\not | \ n$.

\hfill $\bullet$
\end{exa}

We now give the following

\begin{conj}
Every good non-cyclic transitive homomorphism $B_k \to S_n$ is either canonical (if $n=k$), standard (if $\ k\neq n, \ k\ | \ n$) or
derived from a canonical or a standard homomorphism (if $k \not | \ n$). \hfill $\bullet$
\end{conj}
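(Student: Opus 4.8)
The plan is to prove the three clauses separately, in increasing order of difficulty. When $n=k$ there is nothing new: by Theorem~\ref{Thm F}(d), for $k>8$ every non-cyclic transitive homomorphism $B_k\to S_k$ is conjugate to $\mu_k$, hence canonical. When $k\mid n$, write $n=mk$; by Lemma~\ref{lemma good} a good transitive homomorphism $\omega\colon B_k\to S_{mk}$ either has $supp(\omega)=2m$ (type~$1$), in which case Theorem~\ref{proposition 2m} already shows $\omega$ standard, or has $supp(\omega)=mk$ (type~$2$), settled here only for $m=3$. So the genuinely new content of the ``$k\mid n$'' clause is the type~$2$ case for $m\geq 4$, to be obtained by extending the argument of the theorem on $B_k\to S_{3k}$.

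That extension would start as in the $m=3$ proof: by Lemma~\ref{Lemma k-3 bound} some component $\mathfrak C_r$ of $\widehat\sigma_1$ has length $\geq k-2$, and then $r\cdot|\mathfrak C_r|\leq supp(\omega)=mk$ forces $r\leq mk/(k-2)=m+2m/(k-2)$, so $r\leq m$ once $k>2m+2$. One then runs through the admissible values $2\leq r\leq m$ of the ``long'' cycle length. For the ``too large'' values --- the analogue of $|\mathfrak C_2|\geq k-2$, shown impossible when $m=3$ --- one expects a contradiction with transitivity produced by the (co)retraction package (Proposition~\ref{prop commut retract}, Corollary~\ref{cor cyclic omega}), Theorem~\ref{Thm F}(c)--(e), Lemma~\ref{lemma cyclic part model}, and the two-sided comparison of the cycle decomposition of $\widehat\sigma_i$ coming from $\widehat\sigma_1$ against the one coming from $\widehat\sigma_{k-1}$. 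For the single surviving value, which should be $r=m/l$ for a divisor $l$ of $m$, one argues that the retraction $\Omega_{\mathfrak C_{m/l}}$ is non-cyclic (otherwise Corollary~\ref{cor cyclic omega} would collapse $\omega$ to a cyclic map), good, transitive and of type~$1$, so that Theorem~\ref{proposition 2m} applied with $l$ in place of $m$ gives $\Omega_{\mathfrak C_{m/l}}\sim\psi_l$. After normalizing with Lemma~\ref{lemma normal} one then writes the permutations $C_i=\widehat\sigma_i|_{\Delta^i\cup\Delta^{i+1}}$ as $C^{\psi_l(\sigma_i),m/l}_{\ast}$ via Lemma~\ref{lemma C}, and Lemma~\ref{lemma prep phi} together with Lemma~\ref{lemma condition C_i} turns the braid relations into condition~\eqref{eq condition t}; this exhibits $\omega$, up to conjugacy, as a model homomorphism $\phi_{m,l,\{t^i_\ast\}}$ of Definition~\ref{Def B_k S_mk}, i.e.\ as standard.

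For the clause $k\nmid n$, given a good transitive non-cyclic $f\colon B_k\to S_n$ one must realize it as derived from a canonical or standard homomorphism in the sense of Definition~\ref{def refute conj}. The plan has two steps. First, show that $f$ factors through $\mu_k$ or through one of the model homomorphisms $\varphi\colon B_k\to S_{mk}$, that is $\ker\varphi\subseteq\ker f$, so that $f=\bar f\circ\varphi$; one would attack this with the same tools (Lemma~\ref{lemma supp bound}, Corollary~\ref{corollary lemma good}, the (co)retraction package) to reconstruct, from the cycle data of the $\widehat\sigma_i$, an integer $m$ and a $B_k$-equivariant surjection $\Delta_{mk}\to\Delta_n$. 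Second, with $f=\bar f\circ\varphi$ in hand, take a point stabilizer $K\leq B_k$ (so $\Delta_n\cong B_k/K$ as a $B_k$-set) and choose a subgroup $H\leq S_{mk}$ of index $n$ with $\varphi(B_k)H=S_{mk}$ and $\varphi^{-1}(H)$ conjugate to $K$; then $f$ is derived from $\varphi$ using $H$ by construction. The second step is routine --- when $\varphi=\mu_k$ is onto one may simply take $H=\mu_k(K)$, exactly as in Example~\ref{example conj}. The first step is the crux: it is essentially Conjecture~\ref{conj good} together with the classification, and at present there is no general mechanism forcing a good transitive non-cyclic representation on $n\neq mk$ points to be induced from one of the standard models. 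This, together with the combinatorial growth of the admissible cyclic types of the $C_i$ (governed by Lemma~\ref{lemma C}) as $m$ increases in the type~$2$ case, is the main obstacle, and is why the statement is offered as a conjecture rather than proved.
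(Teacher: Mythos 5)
The statement you were asked about is not a theorem of the paper at all: it is the closing conjecture of Section~\ref{section conjecture}, and the paper offers no proof of it, so there is nothing to compare your argument against. Your proposal, to its credit, recognizes this. The pieces you do establish are exactly the pieces the paper itself establishes elsewhere: the case $n=k$ follows from Theorem~\ref{Thm F}~$(d)$; for $k\mid n$, Lemma~\ref{lemma good} splits good homomorphisms into type~$1$ and type~$2$, Theorem~\ref{proposition 2m} settles type~$1$ for all $m\geq 2$, Theorem~\ref{Thm F}~$(b)$ settles $m=2$, and the theorem of Section~\ref{section B_k S_3k supp=mk} settles type~$2$ only for $m=3$. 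Everything beyond that in your write-up is a program, not a proof, and you say so.

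The genuine gaps are the ones you name, and they are real. For type~$2$ with $m\geq 4$ your sketch already shows strain: the bound $r\leq m$ needs $k>2m+2$, so the induction on cyclic types would give a conclusion only for $k$ large relative to $m$, a restriction absent from the conjecture; and the elimination of the ``bad'' values of $r$ (the analogue of Case~1 and Subcases~2.1--2.2 for $m=3$) relies on the very specific arithmetic $3k$ versus $k-2$, which does not obviously generalize. For $k\nmid n$ the crux is your first step, the factorization $f=\bar f\circ\varphi$ through a canonical or model homomorphism; as you note, nothing in the paper (or in your toolkit of Lemma~\ref{lemma supp bound}, Corollary~\ref{corollary lemma good}, Proposition~\ref{prop commut retract}) forces this, and indeed it subsumes Conjecture~\ref{conj good} together with the classification itself. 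One smaller caution: even granting the factorization, your ``routine'' second step of taking $H=\mu_k(K)$ only produces an action isomorphic to $f$ when the point stabilizer $K$ contains $\ker\mu_k$, i.e.\ precisely under the factorization hypothesis, so the two steps are not independent. In short, your proposal is an honest and essentially accurate reduction of the conjecture to its open core, but it is not, and does not claim to be, a proof; the paper likewise leaves the statement as a conjecture.
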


\addcontentsline{toc}{chapter}{Bibliography}

\end{document}